 \numberwithin{equation}{section}
\theoremstyle{plain}
\newtheorem{thm}{Theorem}[section]
\newtheorem{lem}[thm]{Lemma}
\newtheorem{pro}[thm]{Proposition}
\newtheorem{cor}[thm]{Corollary}
\newtheorem{ex}[thm]{Example}
\newtheorem{de}[thm]{Definition}
\newtheorem{rem}[thm]{Remark}
\def\Proof{\noindent {\bf Proof.\ }}
\newtheorem{ques}[thm]{Question}
\def\R {{\Bbb R}}
\def\N {{\Bbb N}}
\def\Z {{\Bbb Z}}
\def\cal{\mathcal}
\def\M {{\mathcal M}}
\def\Q {{\Bbb Q}}
\def\J {{\mathcal J}}
\def\F {{\mathcal F}}
\def\I {{\mathcal I}}
\def\A {{\mathcal A}}
\def\L {\mathcal{L}}
\def\D{\mathcal{D}}
\def\U{{\mathcal U}}
\def\x{{\bf x}}
\def\bx{{\bf x}}
\def\T{\top}
\def \S{\mathcal{S}}
\def \b0{\mathbf{0}}
\DeclareMathOperator{\topo}{\mathrm{top}}
\newcommand{\gtsim}{\succcurlyeq}
\newcommand{\ltsim}{\preccurlyeq}
\def\bM{{\bf M}}
\newcommand{\interior}[1]{%
  {\kern0pt#1}^{\mathrm{o}}%
}
\title{Lyapunov Exponents for products of  matrices}
\author{De-Jun FENG}
\address{
Department of Mathematics\\
The Chinese University of Hong Kong\\
Shatin,  Hong Kong\\
}
\email{djfeng@math.cuhk.edu.hk}
\author{Chiu-Hong Lo}
\address{
Department of Mathematics\\
The Chinese University of Hong Kong\\
Shatin,  Hong Kong\\
}
\email{chlo@math.cuhk.edu.hk}
\author{Shuang Shen}
\address{
Department of Mathematics\\
The Chinese University of Hong Kong\\
Shatin,  Hong Kong\\
}
\email{gjdyyss@163.com}
\subjclass[2010]{Primary 15A60; Secondary 37D35, 28A78, 28A80,  37A60}
\keywords{Matrix products, matrix pressure, Lyapunov exponent, Parry measures, self-similar and self-affine measures, sofic affine-invariant sets.}
\date{}
\begin{document}

\thanks{}

\begin{abstract}
Let ${\bf M}=(M_1,\ldots, M_k)$ be a tuple of  real $d\times d$  matrices. Under certain irreducibility assumptions, we give checkable criteria for deciding  whether  ${\bf M}$ possesses the following property: there exist two constants $\lambda\in \R$ and $C>0$ such that for any $n\in \mathbb{N}$ and any $i_1, \ldots, i_n \in \{1,\ldots, k\}$, either $M_{i_1} \cdots M_{i_n}={\bf 0}$ or $C^{-1} e^{\lambda n}  \leq \| M_{i_1} \cdots M_{i_n} \| \leq C e^{\lambda n}$, where $\|\cdot\|$ is a matrix norm. The proof is based on  symbolic dynamics and  the thermodynamic formalism for matrix products. As applications, we are able to check  the absolute continuity of a class of overlapping self-similar measures on $\R$, the absolute continuity of certain self-affine measures in $\R^d$ and  the dimensional regularity of a class of sofic affine-invariant sets in the plane.

\end{abstract}

\maketitle

\section{Introduction}

In this paper, we consider  Lyapunov exponents of  matrix products. Let ${\bM}=(M_1,\ldots, M_k)$ be a given tuple of real $d\times d$ matrices. 

\begin{de}
We say that $\bM$ has a {\it uniform Lyapunov exponent modulo $0$} if there exist $C>0$ and $\lambda\in \R$ such that for any $n\in \N$ and any $i_1,\ldots, i_n\in \{1,\ldots, k\}$,
\begin{equation}
\label{e-ULE}
\mbox{ either }\quad  M_{i_1}\cdots M_{i_n}={\bf 0} \quad \mbox{ or }\quad  C^{-1}e^{\lambda n}\leq  \|M_{i_1}\cdots M_{i_n}\|\leq Ce^{\lambda n},
\end{equation}
where $\|\cdot\|$ is a given matrix norm. Clearly the above property is independent of the choice of matrix norm. 
\end{de}

\begin{de}
\begin{itemize} \item[(i)]
${\bM}$ is said to be irreducible if there is no non-zero proper linear subspace $V$ of $\R^d$  such that $M_iV\subset V$ for all $1\leq i\leq k$.
\item[(ii)]  ${\bM}$ is said to be positively irreducible if $M_i$ are all non-negative matrices and there exists  $\ell\in \N$ so that $\sum_{j=1}^\ell (\sum_{i=1}^kM_i)^j$ is a strictly positive matrix.
    \end{itemize}
\end{de}
 We remark that the positive irreducibility does not imply the irreducibility.  The main problem  we address in this paper  is  the following.

\begin{ques}
\label{ques-1}
 Suppose that $\bM$ is irreducible or positively irreducible.  Can we determine whether $\bM$ has a uniform Lyapunov exponent modulo $0$?
\end{ques}

We remark that without any irreducibility assumption, there is no general algorithm to check whether $\bM$ has a uniform Lyapunov exponent modulo $0$. This follows from the result of Blondel and Tsitsiklis  \cite{BlondelTsitsiklis2000} that the boundedness  of a matrix semigroup is generally undecidable. For details, see Section~\ref{S-8}.

Whilst Question \ref{ques-1} is of independent interest, our study is directly motivated by several questions arising in fractal geometry and dynamical systems, although their answers have been known or partially known. One is on the absolute continuity of a class of overlapping self-similar measures on $\R$,  another one
is on the
absolute continuity of certain  self-affine measures on $\R^d$,  and the last one is on
the dimensional regularity of certain sofic affine-invariant sets on the $2$-torus ${\Bbb T}^2$.
Below we describe them in more details.

First we state the question on self-similar measures. Let  $\{S_j\}_{j=1}^m$ be a family of contractive similitudes on $\R$ given by
\begin{equation}
\label{e-IFS}
S_j(x) = \rho x + b_j, \quad  j = 1, \ldots, m,
\end{equation}
where $m\geq 2$, $0<\rho<1$ and $b_1<\cdots<b_m$.
Given a probability vector $(p_1,\ldots, p_m)$, let $\mu$ be the self-similar measure generated by $\{S_j\}_{j=1}^m$ and $(p_1,\ldots, p_m)$. That is, $\mu$ is the unique Borel probability measure on $\R$ satisfying
\[
\mu = \sum_{j=1}^m p_j \mu \circ S_j^{-1}.
\]
(see \cite{Hutchinson1981}).  It is well known that $\mu$ is either absolutely continuous or purely singular with respect to the Lebesgue measure on $\R$.  However,  it remains a fundamental and open problem  to judge the type of  $\mu$  in the above general setting (see e.g. \cite{Solomyak1995, PeresSchlagSolomyak1998, Shmerkin2014, ShmerkinSolomyak2016} and the references therein). Below is a special restricted version of this problem.

\begin{ques}
\label{ques-2} Let $\mu$ be the self-similar measure generated by $\{S_j(x)=\rho x + b_j\}_{j=1}^m$ and a probability vector $\{p_j\}_{j=1}^m$.
Suppose that $\{S_j\}_{j=1}^m$ satisfies the finite type condition (see Section~\ref{S-5} for the definition).  Can we determine whether $\mu$ is absolutely continuous?
\end{ques}

There are many  examples of iterated function systems which allow overlaps but satisfy the finite type condition (see \cite{NgaiWang2001}).  In \cite[Theorem 1.3]{LauNgaiRao2001},  Lau, Ngai and Rao provided a confirmative answer to Question \ref{ques-2}. They proved that $\mu$ is absolutely continuous  if and only if
certain constructed matrix has spectral radius $\rho$. Alternatively, Protasov \cite{Protasov2000} provided an algorithm to check the absolute continuity of $\mu$ by the Fourier analysis approach, in the special case  when $\{S_j\}_{j=1}^m$ is an integral iterated function system on $\R$, i.e., $S_j(x)=\frac{1}{N}(x+d_j)$ with $N\geq 2$ being an integer and $d_j\in \Z$  (see Remark \ref{rem-6.2}).

As an analogue of Question \ref{ques-2},  the following  problem is on certain self-affine measures (see Section~\ref{S-6} for the definition).
\begin{ques}
\label{ques-2'}
Let $d\geq 2$. Let $\mu$ be the self-affine measure generated by a family of affine maps $\{S_j(x)=A^{-1}(x+d_j)\}_{j=1}^m$ on $\R^d$ and a probability vector $\{p_j\}_{j=1}^m$,   where
$A$ is a $d\times d$ expanding integer matrix and $d_j\in \Z^d$. Can we determine whether $\mu$ is absolutely continuous?
\end{ques}

In \cite{DengHeLau2008}, Deng, He and Lau investigated this question. They established a vector representation for $\mu$ via matrix products, and showed that
$\mu$ is absolutely continuous if and only if the corresponding matrix products have certain limiting behaviors.   However  there is no efficient algorithm to check these limiting behaviors directly (see  Remark \ref{rem-L}).  Alternatively, one can use the Fourier analysis approach to give an equivalent condition for $\mu$ to be absolutely continuous (see Proposition \ref{pro-self-affine}(iii)). Nevertheless, it is unlikely that Protasov's algorithm in \cite{Protasov2000} can be extended to check this condition (see Remark \ref{rem-6.2}).

Next we address the question on sofic affine-invariant sets on  the $2$-torus ${\Bbb T}^2=\R^2/\Z^2$. Let $m, n$ be positive integers with $n > m$. Let $T$ be the affine endomorphism on ${\Bbb T}^2$ represented by  the $2\times 2$  diagonal matrix $\mbox{diag}(n,m)$.  Write $$ D = \{0, \dots, n-1\}\times \{0, \dots, m-1\}.$$
Define a map $R_T : D^\N \to \mathbb{T}^2$  by
\[
R_T( (x_k, y_k)_{k=1}^\infty): = \sum_{k=1}^\infty \left(\begin{array}{cc}n^{-k} & 0\\
0& m^{-k}\end{array}\right)\left(\begin{array}{c}x_k\\
y_k\end{array} \right).
\]
Let $A = (a_{ij})_{i,j \in D}$ be a positively irreducible $0$-$1$ matrix. Then $A$ defines an irreducible subshift of finite type $\Sigma_A\subset D^\mathbb{N}$  by
\[
\Sigma_A: = \left\{ (z_k)_{k=1}^\infty :\; a_{z_k z_{k+1}} = 1 \mbox{ for } k\geq 1\right\}.
\]
Now let $K_T(A): = R_T(\Sigma_A)$. Then $K_T(A)$ is a $T$-invariant subset of ${\Bbb T}^2$. This is the model of  sofic affine-invariant sets studied in \cite{KenyonPeres1996, KenyonPeres1996b}, which is a  generalization of the  class of  Bedford-McMullen carpets (cf. \cite{Bedford1984, McMullen1984}). A natural and important  question  which arises here is that whether the Hausdorff dimension and the box-counting dimension of  $K_T(A)$ coincide. The reader is referred to \cite{Falconer2003, Mattila1995} for the definitions of these dimensions.

In \cite{KenyonPeres1996,KenyonPeres1996b},  Kenyon and Peres gave implicit formulas of the Hausdorff and box-counting dimensions of  $K_T(A)$ in terms of some dynamical notions (e.g. topological entropy, pressure, and measure-theoretic entropy). They showed that these two dimensions coincide if and only if
the unique invariant measure of maximal entropy on $\Sigma_A$ projects via $\pi$ to the invariant measure of maximal entropy on the sofic shift $\pi (\Sigma_A)$, where $\pi$ is the projection map given by $(x_k, y_k)_{k=1}^\infty\mapsto (y_k)_{k=1}^\infty$. It leads to the following.

\begin{ques}
\label{ques-3}
In the above setting, can one determine  whether the unique invariant measure of maximal entropy on $\Sigma_A$ projects via $\pi$ to the invariant measure of maximal entropy on the sofic shift $\pi (\Sigma_A)$?
\end{ques}

In \cite[p.~161]{KenyonPeres1996},  Kenyon and Peres mentioned that the answer of Question \ref{ques-3} is positive. However they did not give a detailed justification.

In this paper, we show that  Questions \ref{ques-2}-\ref{ques-3} can be reduced to Question \ref{ques-1} (see Theorems \ref{thm-6.1}, \ref{thm-6.4} and \ref{thm-5.1}, respectively). Indeed, for each of  Questions \ref{ques-2}-\ref{ques-3},  we can construct a tuple $\bM=(M_1,\ldots, M_k)$ of non-negative square matrices, so that ${\bf M}$ is positively irreducible and the question is reduced to determining whether $\bM$ has a  uniform Lyapunov exponent modulo $0$.

Furthermore, we show that the answer to Question \ref{ques-1} is positive. This is done by providing  checkable criteria under the assumptions of irreducibility and positive irreducibility, respectively. As a consequence, we are able to give affirmative answers to  Questions \ref{ques-2}-\ref{ques-3} using this new approach. Moreover, we can derive some new properties of the self-similar/self-affine  measures  considered in Questions \ref{ques-2}-\ref{ques-2'} (see Corollary \ref{cor-6.1}, Theorem \ref{thm-cts}). For instance, we show that if these measures are singular, then their Hausdorff dimensions are strictly less than the dimensions of ambient spaces. Moreover for the self-similar measure $\mu$ considered in Question \ref{ques-2},  we give a checkable criterion for deciding the absolute continuity of $\mu$ with respect to the  $s$-dimensional Hausdorff measure ${\mathcal H}^s|_K$ restricted on $K$, where $s=\dim_HK$,  and show that if $\mu$ is absolutely continuous with respect to the Lebesgue measure on $\R$ then, restricted on  certain open interval, the density function $\frac{d\mu}{dx}$ only takes values in $(c_1, c_2)$ for some positive constants $c_1$ and $c_2$.

To state our criteria for Question \ref{ques-1}, we first consider the non-negative case. Suppose that ${\bM}=(M_1,\ldots, M_k)$ is a  tuple of non-negative $d\times d$  matrices and  ${\bM}$ is positively irreducible. Set $\A=\{1,\ldots, k\}$  and write
\begin{equation}
\label{e-YM}
Y_{{\bf M}}:=\left\{ (j_n)_{n=1}^\infty \in \mathcal{A}^\N :\; M_{j_1 \cdots j_m} \neq {\bf 0} \mbox{ for all } m \geq 1\right\},
\end{equation}
where we adopt the convention that $M_{j_1 \cdots j_m}=M_{j_1}\cdots M_{j_n}$.  Then $Y_{\bf M}$ is an irreducible sofic shift over $\A$ (see Proposition \ref{Y is irred sofic}).   It is well known that the topological entropy of a sofic shift is computable (see Section~\ref{S-3.2}).   Write
\begin{equation}
\label{e-RM}
r({\bf M})=\exp(\log \rho(M_1+\cdots+M_k)-h_{\rm top}(Y_{\bf M})),
\end{equation}
where $\rho(A)$ stands for the spectral radius of $A$ (i.e. the maximal modulus of eigenvalues of $A$),   and  $h_{\rm top}(Y_{\bf M}$) denotes the topological entropy of  $Y_{\bf M}$.  Then $r({\bf M})$ is computable.

 Set
 \begin{equation}
 \label{e-1.4}
 {\mathcal J}:=\left\{j_1\cdots j_n\in \A^n:\; 1\leq n\leq d^2, \; (M_{j_1\cdots j_n})_{1,1}\neq 0\right\}.
 \end{equation}
Then  $ {\mathcal J}\neq \emptyset$ (see Lemma \ref{lem-irr}). Define a $d\times d$ matrix $B$  by
 \begin{equation}
 \label{e-B}
 B=\frac{1}{\#  ({\mathcal J})}\sum_{J\in {\mathcal J}}M_J,
 \end{equation}
where the symbol $\#$  stands for  the cardinality. The matrix $B$ might not be positively irreducible. Here we consider its irreducible decomposition. Indeed, there exists a permutation matrix $T$ such that $T^{-1}B T$ has the following block upper triangular form:
\begin{equation}
\label{e-blockB}
T^{-1}BT=\left(
\begin{array}{cccc}
B^{(1)} & * & \ldots & *\\
0 & B^{(2)} & * & \vdots\\
\vdots& & \ddots & * \\
0 & \ldots &  0& B^{(t)}
\end{array}
\right)
\end{equation}
with square diagonal blocks of sizes $d_i$, $i=1,\ldots, t$, $\sum_{i=1}^t d_i=d$, so that for each $i=1,\ldots, t$, either $B^{(i)}$ is positively irreducible or $B^{(i)}={\bf 0}$.

Set
\begin{equation}
\label{e-Lambda}\Lambda=\{i:\; 1 \leq i\leq t,\; B^{(i)}\neq {\bf 0}\}.
\end{equation}
For $i\in \Lambda$, let $v_i, u_i\in \R^{d_i}$ be the left and right positive eigenvectors of $B^{(i)}$  corresponding to the eigenvalue $\rho\left(B^{(i)}\right)$, satisfying $v_i^{\T} u_i=1$, where the superscript $\T$ stands for  transpose.  The existence of such eigenvectors is ensured by the Perron-Frobenius theory (see e.g. \cite[Theorem 8.4.4]{HornJohnson1985}).

For  $J\in {\mathcal J}$, partition $T^{-1}M_JT$ into the form
\begin{equation}
\label{e-block}
T^{-1}M_JT=\left(
\begin{array}{cccc}
M_J^{(1)} & * & \ldots & *\\
* & M_J^{(2)} & * & \vdots\\
\vdots& & \ddots & * \\
* & \ldots &  *& M_J^{(t)}
\end{array}
\right)
\end{equation}
with  block sizes the same as  in \eqref{e-blockB}. By the definition of $B$, $T^{-1}M_JT$ is also block upper triangular for $J\in {\mathcal J}$. Moreover,  this is true for all $J\in \A^*$ with $(M_J)_{1,1}>0$  (see Lemma \ref{lem-3.7}). For $J\in \J$, we let $|J|$ denote the length of $J$, i.e.~$|J|=n$ if $J=j_1\cdots j_n$.  Now we are ready to state one of our criteria for the non-negative case.
\begin{thm}
\label{thm-1.4}
Suppose that ${\bf M}$ is positively irreducible. Then ${\bf M}$ has a uniform Lyapunov exponent modulo $0$ if and only if there exists $i\in \Lambda$ such that
\begin{equation}
\label{e-cr}
v_i^\T M_J^{(i)}u_i=r({\bf M})^{|J|} \qquad \mbox{ for all }\;J\in {\mathcal J}.
\end{equation}
\end{thm}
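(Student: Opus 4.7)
My plan is first to rescale $M_j \mapsto M_j/r(\bM)$; this preserves both the uniform Lyapunov property and the criterion, so I may assume $r(\bM)=1$. Under this normalization ULE becomes uniform boundedness $\|M_I\| \asymp 1$ over admissible words $I$ (those with $M_I \neq \mathbf{0}$), and the criterion reads $v_i^\T M_J^{(i)} u_i = 1$ for all $J \in \mathcal{J}$. Before splitting into the two directions, I would observe that the value of the uniform exponent (if it exists) is forced to be $\log r(\bM)$: since $S := M_1+\cdots+M_k$ is primitive by positive irreducibility, $\|S^n\| \asymp \rho(S)^n$; on the other hand $S^n = \sum_{|I|=n} M_I$ is a non-negative sum of $\asymp e^{n\,h_{\rm top}(Y_{\bM})}$ non-vanishing terms, so the only exponent compatible with \eqref{e-ULE} is $\log\rho(S) - h_{\rm top}(Y_{\bM}) = \log r(\bM)$.

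\textbf{Sufficiency.} Assume the criterion holds for some $i \in \Lambda$. Summing $v_i^\T M_J^{(i)} u_i = 1$ across $J \in \mathcal{J}$ and using $\#(\mathcal{J})\,B^{(i)} = \sum_{J\in \mathcal J} M_J^{(i)}$ pins down $\rho(B^{(i)}) = 1$ with Perron vectors $v_i, u_i$. For a general admissible $I$, I would decompose $I = J_1 \cdots J_p$ as a concatenation of elements of $\mathcal{J}$ (together with a prefix/suffix residue of length $<d^2$); by the block triangularity from Lemma~\ref{lem-3.7}, $M_I^{(i)} = M_{J_1}^{(i)} \cdots M_{J_p}^{(i)}$. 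The key step is to show that the iterated non-negative images $M_{J_p}^{(i)} u_i,\ M_{J_{p-1}}^{(i)} M_{J_p}^{(i)} u_i,\ \ldots$ remain comparable to $u_i$ inside the positive cone: this is a Perron-Frobenius / Hilbert-metric cone-contraction argument exploiting that the $M_J^{(i)}$ are non-negative, that $B^{(i)}$ is positively irreducible with Perron vectors $v_i, u_i$, and that the condition pins each $M_J^{(i)}$ to the hyperplane $\{N\geq \mathbf{0}: v_i^\T N u_i = 1\}$. This yields $\|M_I^{(i)}\| \asymp 1$, and the off-diagonal blocks are then absorbed via the triangular structure of \eqref{e-blockB}, giving $\|M_I\| \asymp 1$.

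\textbf{Necessity and main obstacle.} Assume ULE. Then each $v_i^\T M_J^{(i)} u_i$ lies in a uniform interval, forcing $\rho(B^{(i)}) \in [c_1,c_2]$ for every $i \in \Lambda$. The hard part is (a) picking out the correct block $i$ and (b) upgrading this comparability to \emph{exact} equality on every $J \in \mathcal{J}$. For this I would invoke the matrix pressure / thermodynamic formalism developed earlier in the paper: under ULE, the top Lyapunov exponent is realized by the Parry (maximal entropy) measure on the sofic shift $Y_{\bM}$, and the ergodic decomposition of this measure corresponds to the block decomposition \eqref{e-blockB}. Exactly one block $B^{(i)}$ carries the Parry measure, and its Perron eigendata encodes the asymptotics of $\|M_I\|$; the shift-invariance of the Parry measure then forces the equality $v_i^\T M_J^{(i)} u_i = r(\bM)^{|J|}$ on every $J \in \mathcal{J}$. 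The anticipated main obstacle is precisely this last step: verifying that the finite-type combinatorics of $\mathcal{J}$ (words of length at most $d^2$ with positive $(1,1)$-entry) exhausts the return loops carrying the Parry mass, so that the variational/invariance statement transfers into a \emph{pointwise} identity on the finite set $\mathcal{J}$ rather than only an averaged one.
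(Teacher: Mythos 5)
Your proposal breaks at the combinatorial step on which the sufficiency direction rests. You cannot decompose a general admissible word $I$ (nor even a word in $\U$) as a concatenation $J_1\cdots J_p$ of elements of $\mathcal J$ up to a residue of length $<d^2$: elements of $\mathcal J$ must have positive $(1,1)$ entry, but an admissible word need never return to index $1$ at all (in the $2\times2$ example displayed in the introduction, $M_{4^n}\neq{\bf 0}$ while $(M_{4^n})_{1,1}=0$ for every $n$), and even a word $U$ with $(M_U)_{1,1}>0$ can have return blocks to index $1$ of arbitrary length (a path $1\to 2\to 2\to\cdots\to 2\to 1$ through a self-loop), so its return blocks need not lie in $\mathcal J$. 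The paper propagates the criterion from the finite set $\mathcal J$ to all of $\U$ by a completely different device, the affine-hull stabilization argument of Step 5 in Proposition \ref{pro-im} (the affine hulls $W_{n,s}$ of $\{M_J^{(i)}u_i\}$ have nondecreasing total dimension bounded by $d^2$, hence stabilize by $n=d^2$, and stabilization propagates); once the identity holds on all of $\U$, the two-sided bound $\|M_J^{(i)}\|\approx 1$ is immediate from strict positivity of $u_i,v_i$, so no Hilbert-metric cone contraction is needed — nor is one available, since the $M_J^{(i)}$ are only non-negative. Your final claim that the upper bound on $\|M_I\|$ for \emph{all} admissible $I$ is "absorbed via the triangular structure" is also unsupported: in the paper this upper bound comes from a measure-rigidity argument (Proposition \ref{lower bound implies upper bound}), comparing the Gibbs equilibrium state for $(\bM,1)$ with the Parry measure on $Y_{\bM}$ and using that distinct ergodic measures are mutually singular, combined with Lemma \ref{lem-5.0}; nothing in your sketch replaces this.

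The necessity direction also has gaps. ULE gives only the upper estimate $v_i^\T M_J^{(i)}u_i\lesssim 1$ for every $i\in\Lambda$; the lower estimate holds only for \emph{some} $i$, and producing that block is precisely the work of Step 2 of Proposition \ref{pro-im}, done by applying Proposition \ref{pro-FK} to the Parry measure of a full shift over a carefully chosen finite sub-alphabet of $\U$ together with Lemma \ref{lem-4.6}. More seriously, your proposed mechanism for the exact identity — that "shift-invariance of the Parry measure forces" $v_i^\T M_J^{(i)}u_i=r(\bM)^{|J|}$ — cannot work, because Parry/Gibbs estimates only ever yield two-sided comparabilities, never exact equalities. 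The paper obtains the equality by an algebraic rank-one trick: Perron--Frobenius Ces\`aro averaging puts $u_iv_i^\T$ in $\overline{\rm co}\{M_J^{(i)}:J\in\U\}$, whence $\bigl\|\bigl(u_iv_i^\T M_J^{(i)}\bigr)^n\bigr\|\approx 1$ uniformly in $n$, and since $\bigl(u_iv_i^\T M_J^{(i)}\bigr)^n=\bigl(v_i^\T M_J^{(i)}u_i\bigr)^{n-1}u_iv_i^\T M_J^{(i)}$ the scalar must equal $1$. You correctly flagged this as the main obstacle, but it is left unresolved, so neither direction of your argument goes through as written.
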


Since $r({\bf M})$ is computable and ${\mathcal J}$ is a finite set, the above theorem provides an algorithm for deciding whether ${\bf M}$ has a uniform Lyapunov exponent modulo $0$.

Next we consider the general case that ${\bf M}$ consists of real $d\times d$ matrices.  For $q>0$,  define 
\begin{equation}
\label{e-pressure}
P({\bf M}, q)=\lim_{n\to \infty} \frac{1}{n}\log \sum_{i_1\cdots i_n\in \A^n} \|M_{i_1}\cdots M_{i_n}\|^q,\quad q>0.
\end{equation}
The existence of the above limit follows by subadditivity. We call $P({\bf M}, \cdot)$ the {\it pressure function} associated with ${\bf M}$. In  \cite{Zhou1998}, Zhou proved that  that  $P({\bf M},q)$ is computable for every even positive integer $q$; more precisely, $$P({\bf M},q)=\log \rho\left(\sum_{i=1}^kM_i^{\otimes q}\right)$$
 for even $q$, where $A^{\otimes q}=A\otimes\cdots\otimes A$ is the $q$-fold Kronecker product of $A$.

The following is another checkable criterion for Question \ref{ques-1}.

\begin{thm}
\label{thm-1.6}
Suppose that ${\bf M}$ is irreducible or positively irreducible. Then ${\bf M}$ has a uniform Lyapunov exponent modulo $0$ if and only if 
\begin{equation}
\label{e-PC}
 P({\bf M}, 2)+ P({\bf M}, 6)=2P({\bf M}, 4).
\end{equation}
\end{thm}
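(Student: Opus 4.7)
The plan is to use the convexity and analyticity of the pressure function $P(\bM,\cdot)$ together with the thermodynamic formalism for matrix products developed earlier in the paper. For the ``only if'' direction, I would assume $\bM$ satisfies \eqref{e-ULE} with constants $\lambda$ and $C$. Then every non-zero length-$n$ product has norm in $[C^{-1},C]\cdot e^{\lambda n}$, so
\[
\sum_{i_1\cdots i_n\in\A^n}\|M_{i_1}\cdots M_{i_n}\|^q \;=\; N_n\, e^{q\lambda n+O(1)},
\]
where $N_n$ denotes the number of non-zero length-$n$ products. Since $\frac{1}{n}\log N_n\to h$ for some $h\ge 0$ (namely $h_{\rm top}(Y_\bM)$ in the positively irreducible case), this gives $P(\bM,q)=h+q\lambda$, which is affine in $q$; the identity \eqref{e-PC} follows at once.

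For the ``if'' direction, I would first observe that $q\mapsto P(\bM,q)$ is convex on $(0,\infty)$: H\"older's inequality $\sum_{\bi\in\A^n}\|M_\bi\|^4\le(\sum_{\bi\in\A^n}\|M_\bi\|^2)^{1/2}(\sum_{\bi\in\A^n}\|M_\bi\|^6)^{1/2}$, after taking $\tfrac{1}{n}\log$ and letting $n\to\infty$, yields $2P(\bM,4)\le P(\bM,2)+P(\bM,6)$, and an analogous argument shows convexity in general. Hence the hypothesis \eqref{e-PC} forces $P(\bM,\cdot)$ to be affine on $[2,6]$. Combining this with the analyticity of $q\mapsto P(\bM,q)$ on $(0,\infty)$ proved earlier in the paper under the (positive) irreducibility hypothesis, the affinity extends to all of $(0,\infty)$, giving $P(\bM,q)=c_0+c_1 q$ for some constants $c_0,c_1\in\R$.

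The main step, and the real obstacle, is to extract from the global affinity $P(\bM,q)=c_0+c_1 q$ the uniform estimate \eqref{e-ULE} with $\lambda=c_1$. My approach is to invoke the equilibrium/Gibbs measures $\mu_q$ for the matrix potential $q\log\|M_\bi\|$ supplied by the matrix thermodynamic formalism; under (positive) irreducibility these obey a two-sided Gibbs bound
\[
c^{-1}e^{q\log\|M_\bi\|-nP(\bM,q)} \;\le\; \mu_q([\bi]) \;\le\; c\, e^{q\log\|M_\bi\|-nP(\bM,q)}
\]
for some $c>0$ and every non-zero cylinder $[\bi]$ with $\bi\in\A^n$. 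Under affinity the exponent becomes $q(\log\|M_\bi\|-c_1 n)-c_0 n$, and since $\mu_q$ is a probability measure whose non-zero length-$n$ cylinders number of order $e^{c_0 n}$, letting $q\to\infty$ pinches $\log\|M_\bi\|-c_1 n$ from above uniformly in $\bi$ and $n$, while $q\to 0^+$ pinches it from below. This yields $|\log\|M_\bi\|-c_1 n|=O(1)$ for every non-zero product, i.e.\ \eqref{e-ULE}. The subtle point is obtaining \emph{bounded} (not merely $o(n)$) deviations, which requires the full Gibbs structure and is where the (positive) irreducibility hypothesis enters crucially. In the positively irreducible case an alternative, more direct route is to identify $c_0=P(\bM,0)=h_{\rm top}(Y_\bM)$ and $c_0+c_1=P(\bM,1)=\log\rho(M_1+\cdots+M_k)$, so $c_1=\log r(\bM)$, and then verify the criterion \eqref{e-cr} of Theorem \ref{thm-1.4}.
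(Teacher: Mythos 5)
Your ``only if'' direction matches the paper's and is fine. The ``if'' direction, however, has two genuine gaps.

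First, you invoke \emph{analyticity} of $q\mapsto P({\bf M},q)$ on $(0,\infty)$ to upgrade affinity on $[2,6]$ to affinity on all of $(0,\infty)$. The paper never proves analyticity; it only proves \emph{differentiability} (Lemma~\ref{lem-derivative}), via the Ruelle-type derivative formula and uniqueness of equilibrium states. A convex differentiable function that is affine on a subinterval need not be affine everywhere, so differentiability alone does not give you the global affinity you want, and the analyticity you cite is unavailable.

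Second, your pinching argument takes $q\to\infty$ and $q\to0^+$ in the Gibbs bound
$c(q)^{-1}e^{q\log\|M_{\bi}\|-nP({\bf M},q)}\le\mu_q([\bi])\le c(q)\,e^{q\log\|M_{\bi}\|-nP({\bf M},q)}$.
The Gibbs constant $c(q)$ coming from Theorem~\ref{thm-matrix} depends on $q$ and there is no control on it as $q\to0^+$ or $q\to\infty$. Even granting uniform constants, with $c_0>0$ the upper bound $\mu_q([\bi])\le1$ only gives $\log\|M_{\bi}\|-c_1n\le(\log c+nc_0)/q$, which is $\epsilon n+O_\epsilon(1)$ rather than $O(1)$; you would not obtain bounded deviations this way.

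The paper avoids both obstacles: it proves directly that affinity on \emph{any} finite interval $(a,b)$ implies ULE mod~$0$, with no need to extend affinity globally. The mechanism is that on $(a,b)$ the affinity forces $\lambda({\bf M},\nu_q)$ and $h_{\nu_q}(\sigma)$ to be constant in $q$, so for any two fixed $q_1\ne q_2$ in $(a,b)$ the measure $\nu_{q_1}$ is an equilibrium state for $({\bf M},q_2)$; by uniqueness $\nu_{q_1}=\nu_{q_2}$. Writing the two-sided Gibbs estimate for this single measure at the two fixed values $q_1,q_2$ and dividing immediately gives $\|M_I\|^{q_1-q_2}\approx\exp(nu_1(q_1-q_2))$, hence $\|M_I\|\approx e^{u_1n}$, with constants depending only on the two fixed choices. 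This finite-$q$ comparison is what makes the $O(1)$ bound come out cleanly. Your closing sketch (compute $c_0,c_1$ and ``verify \eqref{e-cr}'') is not a proof, and in any case only addresses the positively irreducible case.
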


The above result is somehow unexpected since, for certain given tuple of general matrices,  it is even undecidable whether the zero matrix is in the semigroup generated by these matrices (see \cite{Paterson1970} and also \cite{BlondelTsitsiklis1997, Cassaigne2014}). This result might also have potential applications in detecting the existence of $L^1$-solutions for general refinement equations in wavelet theory.

 We remark that in the non-negative case, although the condition \eqref{e-PC} looks easier to check than \eqref{e-cr},  it provides less information in classifying those tuples having a uniform Lyapunov exponent modulo $0$.
 
 Next we address some related works in the literature.  Most related to the above results (Theorems \ref{thm-1.4}-\ref{thm-1.6}) are the recent works by Protasov and Voynov \cite{ProtasovVoynov2014} and Morris \cite{Morris2016}. In \cite{ProtasovVoynov2014}, Protasov and Voynov studied when a matrix semigroup has constant spectral radius,  in the sense that  the spectral radius of all its elements is the same and non-zero. Among other things, Protasov and Voynov  pointed out  that for an  irreducible or positively irreducible  tuple $\bM=(M_1,\ldots, M_k)$,  the multiplicative semigroup ${\mathcal S}(\bM)$  generated by $\bM$  has constant spectral radius  if and only if
\begin{equation}
\label{e-PV}
C^{-1}\leq \|M\|\leq C \qquad \mbox{ for some constant $C>0$ and  all }M\in {\mathcal S}(\bM).
\end{equation}
 This fact follows from \cite[Theorem~4.7]{OmladicRadjavi1997}  which says,  for any irreducible matrix semigroup ${\mathcal S}$ with constant spectral radius, there is a norm in $\R^d$ such that the induced operator norm of all matrices from ${\mathcal S}$ is $1$.    Moreover, in the case when ${\bf M}$ is positively irreducible,  Protasov and Voynov proved that if $A$ is  an irreducible matrix in the convex hull of ${\mathcal S}(\bM)$ with $\rho(A)=1$ and $v$ is the right Perron-Frobenius eigenvector of $A$, then \eqref{e-PV} holds if and only if all matrices in ${\mathcal S}(\bM)$
have a common invariant linear subspace that contains all vectors $v-Mv$, $M\in {\mathcal S}(\bM)$, and does not contain $v$.   Based on this criterion, they provided an efficient algorithm for deciding whether \eqref{e-PV} holds (see \cite[Section 7.1]{ProtasovVoynov2014}). In the general case when ${\bf M}$ is irreducible, Protasov and Voynov proved  \eqref{e-PV} holds if an only if $P({\bf M}, 2)=P({\bf M}, 4)=\log k$ (see \cite[Section~7.3]{ProtasovVoynov2014}). For some other studies on matrix semigroups with constant spectral radius or multiplicative spectral radius, one is referred to  \cite{OmladicRadjavi1997, Popov2013}.

In \cite[Theorem 10]{Morris2016}, among other things, Morris proved that for an irreducible tuple  $\bM$ of real matrices,  $P({\bf M}, q)$ is an affine function of $q$ on $(0,\infty)$ if and only if there exists $\lambda\in \R$ such that
\begin{equation}
\label{e-sp}
\rho(M_{i_1}\cdots M_{i_n})\in \{0, e^{\lambda n}\} 
\end{equation}
for any $n\in \N$ and $i_1,\ldots, i_n\in \{1,\ldots, k\}$.  It is easy to see that the property \eqref{e-ULE} implies \eqref{e-sp}. Hence by Morris' result, a necessary condition for the property \eqref{e-ULE} is the affinity of  $P({\bf M}, q)$  on $(0,\infty)$. 

In the remaining part of this section,  we outline the main steps in our proofs of Theorems \ref{thm-1.4}-\ref{thm-1.6}. First suppose that ${\bf M}=(M_1,\ldots, M_k)$ is positively irreducible. It is clear that ${\bf M}$  has a uniform Lyapunov exponent modulo $0$ if and only if that for any $c>0$,   $c {\bf M}:= (c M_1,\ldots, cM_k)$  has this property. Multiplying ${\bf M}$ by   the scalar $1/r({\bf M})$ if necessary, we may assume that
${\bf M}$ is normalized in the sense that $r({\bf M})=1$ (see Lemma \ref{lem-normalize}(i)).    Now it is easy to show that ${\bf M}$  has a uniform Lyapunov exponent modulo zero if and only if
\begin{equation}
\label{e-PV1}
C^{-1}\leq \|M\|\leq C \qquad \mbox{ for some constant $C>0$ and  all }M\in {\mathcal S}(\bM)\backslash \{{\bf 0}\}.
\end{equation}

Comparing this with \eqref{e-PV}, the main difference lying here is that  the zero matrix is allowed to be included in  ${\mathcal S}(\bM)$. Although the difference looks slight, it brings significant difficulties to the  study. To investigate when \eqref{e-PV1} holds, set
$$
{\mathcal U}:=\left\{j_1\cdots j_n\in \A^n:\; n\in \N,\; \left(M_{j_1\cdots j_n}\right)_{1,1}>0\right\}.
$$
Then the collection $\{M_J:\; J\in {\mathcal U}\}$  becomes a semigroup. Using the positive irreducibility assumption of  ${\bf M}$, we are able to show that  \eqref{e-PV1} holds if and only
\begin{equation}
\label{e-PV2}
C^{-1}\leq \|M_J\|\leq C \qquad \mbox{ for some constant $C>0$ and all }J\in {\mathcal U}.
\end{equation}
However, the semigroup $\{M_J:\; J\in {\mathcal U}\}$  might  not be positively irreducible.  For instance, this is the case when
$$
{\bf M}=\left\{
\left(\begin{array}{cc}
1 & 0\\
0 & 0
\end{array}
\right),
\left(\begin{array}{cc}
0 & 1\\
0 & 0
\end{array}
\right),
\left(\begin{array}{cc}
0 & 0\\
1 & 0
\end{array}
\right),
\left(\begin{array}{cc}
0 & 0\\
0 & 1
\end{array}
\right)
\right\}.
$$
  As a key part of our proof,   using symbolic dynamics and the thermodynamic formalism for matrix products,  we show that \eqref{e-PV2}
holds if and only if there exists $i\in \Lambda $ such that
\begin{equation}
\label{e-PV3}
C^{-1}\leq \|M_J^{(i)}\|\leq C \qquad \mbox{ for some constant $C>0$ and all }J\in {\mathcal U},
\end{equation}
where $M_J^{(i)}$ is the $i$-th diagonal block in the partitioned matrix $T^{-1}M_JT$ as in \eqref{e-block}.    For $i\in \Lambda$, since $B^{(i)}$ is positively irreducible and   $B^{(i)}$ lies in the convex hull of  $\{M_J^{(i)}:\; J\in {\mathcal U}\}$,  applying the Perron-Frobenius theory of non-negative matrices,   we are able to show that \eqref{e-PV3} holds if and only if $v_i^\T M_J^{(i)}u_i=1$  for all $J\in {\mathcal U}$. Finally, an additional argument shows that the latter condition is equivalent to  $v_i^\T M_J^{(i)}u_i=1$  for all $J\in {\mathcal J}$, from which Theorem \ref{thm-1.4} follows.

Next we outline the proof of Theorem \ref{thm-1.6}. Suppose that ${\bf M}$ is irreducible or positively irreducible. Applying  the thermodynamic formalism of matrix products, we are able to show that the following three properties are equivalent: (i) $P({\bf M}, q)$ is affine on $(0,\infty)$; (ii) $P({\bf M}, q)$ is affine on $(a,b)$ for some $0<a<b$; (iii) ${\bf M}$ has a uniform Lyapunov exponent modulo $0$. The proof of this part is somehow similar to the argument in \cite[Theorem 10]{Morris2016}. Since the pressure function $P({\bf M}, q)$ is always convex, the condition \eqref{e-PC} implies the affinity of $P({\bf M}, q)$ on the interval $[2,6]$, and hence implies that ${\bf M}$ has a uniform Lyapunov exponent modulo $0$.

The paper is organized as follows: In Section~\ref{S-2.1}, we give some notation and preliminaries
about symbolic dynamics and  the thermodynamic formalism for matrix products. In Section~\ref{S-3}, we give further properties of matrix products.  The proofs of Theorems \ref{thm-1.4}-\ref{thm-1.6} are given in Sections~\ref{S-4}-\ref{S-new}. In Sections~\ref{S-5}-\ref{S-7}, we consider Questions \ref{ques-2}-\ref{ques-3}  respectively. In Section~\ref{S-8}, we give some final remarks and questions.

\section{Notation and Preliminaries} \label{S-2.1}

In this section, we provide some necessary notation and preliminaries.  For
two families of real numbers $\{a_i\}_{i \in \I}$ and $\{b_i\}_{i \in \I}$, we write
\begin{align*}
a_i \approx b_i \ \ & \text{ if there is } c>0 \text{ such that } c^{-1} b_i \le a_i \le c b_i\text{ for all } i \in \I; \\
a_i \gtsim  b_i \ \ & \text{ if there is } c>0 \text{ such that }  a_i \ge c b_i \text{ for all } i \in \I; \\
a_i \ltsim  b_i \ \ & \text{ if there is } c>0 \text{ such that }  a_i \le c b_i \text{ for all } i \in \I.
\end{align*}

\subsection{Subshifts}
\label{S-3.1}
%
%

In this subsection, we introduce some basic notation and definitions about subshifts. The reader is referred to \cite{LindMarcus1995} for the background and more details.

Let $\A$ be a finite set of symbols which will be called the \emph{alphabet}.  Let
\[
\A^* = \bigcup_{k=0}^\infty \A^k
\]
denote the set of all finite words with letters from $\A$, including the empty word $\varepsilon$.
Denote the length of a word $I$ by $|I|$, that is, $|I| = k$ if $I \in \A^k$.
Let
\[
\A^\N = \{ (x_i)_{i=1}^\infty :\; x_i \in \A \textnormal { for } i \geq 1\}
\]
denote the set of all infinite sequences of elements from $\A$.  Then $\A^\N$ is a compact metric space under the product topology, which can be induced by the metric
\[
d(x, y) = 2^{-\inf\{k :\; x_k \neq y_k\}}, \ \  \mbox{ for } x = (x_i)_{i=1}^\infty,\; y = (y_i)_{i=1}^\infty.
\]
For $n \in \N$ and $I \in\A^n$, set
\begin{equation}
[I] = \left \{ (x_i)_{i=1}^\infty \in \A^\N :\; x_1 \cdots x_n = I\right\}
\label{cylinder set definition}
\end{equation}
and call it an {\it $n$-th cylinder set} in $\A^\N$.

Define the shift transformation $\sigma : \A^\N \to \A^\N$ by $ (\sigma x)_i = x_{i+1}$ for all $i \in \N$.  Then $\sigma $ is a continuous self-map.  The pair $(\A^\N, \sigma)$ is a  topological dynamical system
and is called the \emph{one-sided full shift over $\A$}.

If $X$ is a compact $\sigma$-invariant subset of $\A^\N$, then the topological dynamical system $(X, \sigma)$ is called a \emph{one-sided subshift over $\A$}, or simply, a \emph{subshift}. Sometimes we write $(X,\sigma_X)$ instead of $(X,\sigma)$.

A word $I\in \A^*$ is said to be \emph{admissible} in a subshift $X$ if it occurs as a consecutive string in a sequence in $X$, that is, $[I]\cap X\neq \emptyset$.    Note that the empty word $\varepsilon$ is also admissible.  The \emph{language} $\L(X)$ of $X$ is the set of all admissible words in $X$, that is,
\[
\L(X) = \{ I \in \A^*:\; I = x_1 \cdots x_n \mbox{ for some } x = (x_i)_{i=1}^\infty \in X \mbox{ and } n \ge 1\} \cup \{ \varepsilon\}.
\]
For $n \ge 0$, denote
\[
\L_n(X) = \{ I \in \L(X) :\; |I| = n\}.
\]

A subshift $X$ over $\A$ is said to be a \emph{subshift of finite type} if there is a matrix $A = (A_{\alpha, \beta})_{\alpha, \beta \in \A}$ with entries 0 or 1 such that
\[
X = \{ (x_i)_{i=1}^\infty  \in \A^\N :\; A_{x_i , x_{i+1} } = 1 \text{ for all } i\geq 1\}.
\]
If the matrix $A$ is positively irreducible (that is, for any $\alpha, \beta \in \A$, there is $N >0$ such that  $(A^N)_{\alpha, \beta}>0$), $X$ is called an \emph{irreducible subshift of finite type}.
Very often we use $\Sigma_A$ instead of $X$ to denote the above subshift of finite type.

Let $(X, \sigma_X)$ and $(Y, \sigma_Y)$ be two subshifts over finite alphabets $\A$ and $\A'$, respectively.  A continuous surjective map $\pi : X \to Y$ such that $\pi \circ \sigma_X = \sigma_Y \circ \pi$ is called a \emph{factor map}.  In this case $Y$ is said to be a \emph{factor} of $X$.

A  subshift $Y$ is called to be a \emph{sofic shift} if $Y$ is a factor of a subshift of finite type, say $X$.  If further $X$ is irreducible, then $Y$ is called an \emph{irreducible sofic shift}.

\subsection{Entropies and  Parry measures}
\label{S-3.2}
Let $(X, \sigma_X)$ be a subshift over a finite alphabet $\A$.  Denote by $\M(X)$ the set of all Borel probability measures on $X$.  Endow $\M(X)$ with the weak-star topology.  Denote by $\M(X, \sigma_X)$ the set of all $\sigma_X$-invariant Borel probability measures on $X$.  The sets $\M(X)$ and $\M(X, \sigma_X)$ are both non-empty, compact and convex (see e.g. \cite{Walters1982}). An element $\mu \in \M(X, \sigma_X)$ is called {\it ergodic} if $\mu(A)=1$ or $0$ for any  Borel set $A\subset X$ with $\sigma_XA\subset A$.

 Let $\L(X)$ and $\mathcal L_n(X)$ be defined as in the preceding subsection.
For convenience, for $\mu \in \M(X)$ and $I \in \L(X)$, we write
\[
\mu (I) : = \mu( [I] \cap X),
\]
where $[I]$ denotes a cylinder set in $\A^\N$ defined as in \eqref{cylinder set definition}.

Given $\mu \in \M(X, \sigma_X)$, the \emph{measure-theoretic entropy} of $\mu$ with respect to $\sigma_X$ is defined by
\begin{equation}
h_{\mu}(\sigma_X) : = - \lim_{n\to \infty}\frac{1}{n} \sum_{I \in \L_n(X)} \mu(I) \log \mu(I).
\end{equation}
The existence of the above limit  follows by  a standard sub-additivity argument.

The \emph{topological entropy} of $X$ with respect to $\sigma_X$ is defined as
\begin{equation}
h_{\mathrm{top}}(X) = \lim_{n\to \infty} \frac{1}{n} \log \#  (\L_n(X)),
\end{equation}
where $\#$ stands for  cardinality. Again, the above limit  exists by sub-additivity.

It is well known (cf. \cite[Chapter 8.3]{Walters1982}) that for any subshift $X$,
$$
h_{\mathrm{top}}(X)=\sup_{\mu\in \M(X, \sigma_X)} h_\mu(\sigma_X),
$$
and the supremum is attainable. Each $\mu\in \M(X, \sigma_X)$ so that $h_\mu(\sigma_X)=h_{\mathrm{top}}(X)$ is called  an {\it invariant measure of maximal entropy}.

The topological entropy of a subshift of finite type or  sofic shift is computable.  More precisely, if $X=\Sigma_A$ is a subshift of finite type associated with a $0$-$1$ matrix $A$, then $h_{\mathrm{top}}(X)=\log \rho(A)$; and if $X$ is a sofic shift, then $h_{\mathrm{top}}(X)=\log \rho(A_G)$, where $A_G$ is the incidence matrix of a right-resolving graph presentation of $X$. For details, see \cite[Chapter 4]{LindMarcus1995}.

The following result is due to Parry. The reader is referred to \cite[Theorem 5.5]{Feng2011} for certain generalization and a detailed proof.
\begin{thm}[\cite{Parry1964}]
\label{thm-Parry}
Suppose that  $(X,\sigma_X)$ is an irreducible subshift of finite type, or an irreducible sofic shift over a finite alphabet. Then
\begin{equation}
\label{e-p1} \# ({\mathcal L}_n(X))\approx e^{n h_{\rm top}(X)}\quad \mbox{ for } n\in \N.
\end{equation}
Moreover $\sigma_X$ has a unique invariant measure of maximal entropy, say $\nu$. Furthermore, $\nu$ is ergodic and it is the unique invariant measure  satisfying  the following property:
\begin{equation}
\label{e-p2}
\nu(I)\approx  e^{-n h_{\rm top}(X)} \quad \mbox{ for }n\in \N,\; I\in \mathcal L_n(X).
\end{equation}
\end{thm}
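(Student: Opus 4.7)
The plan is to first prove the theorem for an irreducible subshift of finite type (SFT) by invoking Perron-Frobenius on the transition matrix, and then reduce the sofic case to the SFT case using a right-resolving presentation. For $X = \Sigma_A$ with $A = (A_{\alpha\beta})$ irreducible, admissibility of $x_1\cdots x_n$ is encoded by $\prod_i A_{x_i x_{i+1}} = 1$, so $\#\mathcal{L}_n(X) = \mathbf{1}^\T A^{n-1}\mathbf{1}$. Perron-Frobenius gives a simple dominant eigenvalue $\rho = \rho(A)$ with strictly positive left/right eigenvectors $u,v$, normalized by $v^\T u = 1$, and a strict spectral gap yielding $A^{n-1} = \rho^{n-1} vu^\T + O(r^n)$ for some $r<\rho$. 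This produces $\#\mathcal{L}_n(X) \approx \rho^n$ and $h_{\rm top}(X) = \log\rho$, establishing \eqref{e-p1}. Next I would define the Parry measure $\nu$ as the stationary Markov measure with initial distribution $\pi_\alpha = u_\alpha v_\alpha$ and transition probabilities $p_{\alpha\beta} = A_{\alpha\beta} v_\beta/(\rho v_\alpha)$; stationarity follows from $u^\T A = \rho u^\T$. A telescoping calculation gives, for every admissible $I = x_1\cdots x_n$,
$$
\nu(I) = u_{x_1}v_{x_1}\prod_{i=1}^{n-1}\frac{v_{x_{i+1}}}{\rho v_{x_i}} = \frac{u_{x_1} v_{x_n}}{\rho^{n-1}},
$$
so positivity of $u,v$ yields \eqref{e-p2}. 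A routine Markov-chain entropy calculation gives $h_\nu(\sigma_X) = \log\rho$, while irreducibility of $(p_{\alpha\beta})$ implies ergodicity of $\nu$.

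The main obstacle is the two-fold uniqueness claim. My plan is to use the Shannon-McMillan-Breiman theorem. If $\mu \in \M(X,\sigma_X)$ satisfies \eqref{e-p2}, then $h_\mu(\sigma_X) \geq \log\rho$ by summing the pointwise inequality $-\log\mu(I) \geq n\log\rho - O(1)$ against $\mu(I)$, while the variational principle caps $h_\mu(\sigma_X) \leq h_{\rm top}(X) = \log\rho$, so every such $\mu$ is of maximal entropy. Conversely, if $\mu$ has maximal entropy then SMB gives $-\tfrac{1}{n}\log\mu([x_1\cdots x_n])\to \log\rho$ for $\mu$-a.e.\ $x$; combined with the two-sided bound $\nu([I]) \approx \rho^{-n}$ from \eqref{e-p2}, this shows $d\mu/d\nu$ is bounded $\mu$-a.e., so $\mu \ll \nu$ and ergodicity of $\nu$ forces $\mu = \nu$. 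The subtle point is that one genuinely needs both sides of \eqref{e-p2}: the lower bound drives $h_\mu \geq \log\rho$, and the upper bound supplies the Radon-Nikodym control.

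For the sofic case I would fix an irreducible right-resolving labeled presentation with label map $\pi:\Sigma_{A_G}\to Y$ (existence standard, see \cite{LindMarcus1995}). Right-resolvingness ensures each $I\in\mathcal{L}_n(Y)$ has between one and $|V(G)|$ preimages in $\mathcal{L}_n(\Sigma_{A_G})$, so $\#\mathcal{L}_n(Y) \approx \#\mathcal{L}_n(\Sigma_{A_G}) \approx \rho(A_G)^n$, giving \eqref{e-p1} with $h_{\rm top}(Y) = \log\rho(A_G)$. Pushing forward the Parry measure $\tilde\nu$ on $\Sigma_{A_G}$ to $\nu := \pi_*\tilde\nu$, the fibre sum $\nu(I) = \sum_{\pi(J)=I}\tilde\nu(J)$ has a bounded number of terms each of size $\approx \rho(A_G)^{-n}$, so \eqref{e-p2} holds. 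The map $\pi$ has uniformly bounded fibres and is therefore entropy-preserving, making $\nu$ of maximal entropy; ergodicity descends from that of $\tilde\nu$, and uniqueness is handled by the SMB argument exactly as above.
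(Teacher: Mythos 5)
The paper itself does not prove this theorem: it is quoted from Parry's 1964 paper with a pointer to \cite[Theorem 5.5]{Feng2011} for a detailed proof, so your proposal has to be judged on its own merits rather than against an internal argument. Most of your construction is the classical one and is fine: the Parry Markov measure, the telescoping computation $\nu(I)=u_{x_1}v_{x_n}\rho^{-(n-1)}$, ergodicity, $h_\nu=\log\rho$, and the reduction of the sofic case to an irreducible right-resolving presentation with uniformly bounded fibres all work. One small misstatement: the expansion $A^{n-1}=\rho^{n-1}vu^{\T}+O(r^n)$ with $r<\rho$ requires $A$ to be \emph{primitive}; an irreducible but periodic $A$ has other eigenvalues on the circle of radius $\rho$ and no spectral gap. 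This is harmless because \eqref{e-p1} only needs $\approx$, which follows directly from $A^{n-1}v=\rho^{n-1}v$ and the strict positivity of $u,v$, but as written the step is false.

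The genuine gap is in the uniqueness argument. From the Shannon--McMillan--Breiman theorem you only get $\mu([x_1\cdots x_n])=e^{-n\log\rho+o(n)}$ for a.e.\ $x$ (and even this presupposes ergodicity of the maximal-entropy measure $\mu$, which requires an ergodic-decomposition step you omit). Combined with $\nu([I])\approx\rho^{-n}$ this only bounds the cylinder ratio $\mu([x_1\cdots x_n])/\nu([x_1\cdots x_n])$ by $e^{o(n)}$, which is subexponential but not bounded; it does not show that $d\mu/d\nu$ is bounded, nor that $\mu\ll\nu$, so the conclusion $\mu=\nu$ does not follow. Uniqueness of the measure of maximal entropy is the genuinely nontrivial part of Parry's theorem and needs a real argument: e.g.\ Walters' conditional-entropy/Jensen computation identifying any maximal-entropy measure with the Parry Markov measure, Bowen's covering argument for Gibbs states (which uses mutual singularity of distinct ergodic measures together with a counting estimate, not a Radon--Nikodym bound), or simply the quasi-multiplicative machinery of \cite[Theorem 5.5]{Feng2011} that the paper invokes. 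Note also that the second uniqueness claim --- that $\nu$ is the unique \emph{invariant} measure satisfying \eqref{e-p2} --- does not need the maximal-entropy detour at all: two-sided bounds $\mu(I)\approx e^{-nh_{\rm top}(X)}\approx\nu(I)$ on all cylinders give $\mu\ll\nu$ directly on the generating algebra, and then invariance of $\mu$ plus ergodicity of $\nu$ forces $\mu=\nu$; routing it through maximal entropy makes it depend on exactly the step that is missing.
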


The measure $\nu$ in the above theorem is called the {\it Parry measure} on $X$.

\subsection{Lyapunov exponents and  the thermodynamic formalism for matrix products}
\label{S-pressure}
Let ${\bf M}=(M_1,\ldots, M_k)$ be a tuple of real $d\times d$ matrices. Write $\A=\{1,\ldots, k\}$. Fix a matrix norm $\|\cdot\|$  on $\R^{d\times d}$ by
$\|A\|=\sum_{1\leq i,j\leq d}|a_{i,j}|$ for $A=(a_{i,j})$. The following result follows from Kingman's sub-additive ergodic theorem.
\begin{thm}[{\cite[Theorem 10.1]{Walters1982}}]
\label{thm-FK}
For any ergodic measure $\mu$ on $\A^\N$, one has
$$
\lim_{n\to \infty} \frac{1}{n}\log \|M_{x_1}\cdots M_{x_n}\|=\lambda({\bf M}, \mu)\quad \mbox{ for $\mu$-a.e.~$x=(x_n)_{n=1}^\infty$},
$$
where $$\lambda({\bf M}, \mu)=\lim_{n\to \infty}\frac{1}{n} \sum_{i_i\cdots i_n\in \A^n} \mu([i_1\cdots i_n])\log \|M_{i_1}\cdots M_{i_n}\|.$$
\end{thm}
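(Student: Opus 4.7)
The plan is to recognize this as a direct application of Kingman's subadditive ergodic theorem to the sequence of functions
\[
f_n:\A^\N\to\R\cup\{-\infty\},\qquad f_n(x)=\log\|M_{x_1}\cdots M_{x_n}\|,
\]
where we adopt the convention $\log 0=-\infty$. The first step is to verify subadditivity: because the norm $\|\cdot\|$ on $\R^{d\times d}$ is submultiplicative (which is immediate from $\|A\|=\sum_{i,j}|a_{ij}|$ up to an absolute constant, or by switching to an equivalent operator norm), one obtains
\[
\|M_{x_1}\cdots M_{x_{n+m}}\|\;\leq\; C\,\|M_{x_1}\cdots M_{x_n}\|\cdot\|M_{x_{n+1}}\cdots M_{x_{n+m}}\|,
\]
hence $f_{n+m}(x)\leq f_n(x)+f_m(\sigma^n x)+\log C$. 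After subtracting the constant $\log C$ (or choosing a genuinely submultiplicative equivalent norm) the sequence $f_n$ is truly subadditive with respect to the shift $\sigma$.

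Next I would check the integrability hypothesis of Kingman's theorem: the positive part $f_1^+$ is bounded above by $\max_i\log^+\|M_i\|<\infty$, so $f_1^+\in L^1(\mu)$. Since $\mu$ is ergodic, Kingman's subadditive ergodic theorem (see \cite[Theorem 10.1]{Walters1982}) then yields a constant $\lambda\in[-\infty,\infty)$ such that
\[
\lim_{n\to\infty}\frac{1}{n}f_n(x)=\lambda\qquad\text{for $\mu$-a.e.~$x$,}
\]
and moreover $\lambda=\inf_{n\geq 1}\frac{1}{n}\int f_n\,d\mu=\lim_{n\to\infty}\frac{1}{n}\int f_n\,d\mu$. (The case $\lambda=-\infty$ can genuinely occur when $\mu$-almost every orbit eventually produces a zero product, but it is covered by the usual statement of Kingman once $f_n$ is allowed to take the value $-\infty$.)

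Finally I would identify $\lambda$ with the quantity $\lambda({\bf M},\mu)$ in the statement by unpacking the integral:
\[
\int f_n\,d\mu
=\sum_{i_1\cdots i_n\in\A^n}\mu([i_1\cdots i_n])\,\log\|M_{i_1}\cdots M_{i_n}\|,
\]
where the $n$-cylinders partition $\A^\N$ and $f_n$ is constant on each cylinder. The existence of $\lim_n\frac{1}{n}\int f_n\,d\mu$ is automatic from subadditivity, and its value is precisely $\lambda({\bf M},\mu)$. The only real subtlety is keeping track of the $-\infty$ convention and the additive constant coming from submultiplicativity up to a norm constant, neither of which affects the pointwise or integral limit after division by $n$; everything else is bookkeeping.
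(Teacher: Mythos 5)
Your proposal is correct and is essentially the paper's own argument: the paper states Theorem \ref{thm-FK} as a direct consequence of Kingman's subadditive ergodic theorem applied to $f_n(x)=\log\|M_{x_1}\cdots M_{x_n}\|$, exactly as you do (note that the entrywise norm $\|A\|=\sum_{i,j}|a_{i,j}|$ is already genuinely submultiplicative, so the constant $C$ is not even needed). Your attention to the $-\infty$ convention and the identification of the integral with the cylinder sum is the right bookkeeping; nothing further is required.
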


We call  $\lambda({\bf M}, \mu)$ the {\it Lyapunov exponent} of ${\bf M}$ with respect to $\mu$.

Recall that the pressure function $P({\bf M},q)$ is defined as in \eqref{e-pressure}.  The following result is a corollary of the sub-additive variational principle established in \cite{CaoFengHuang2008} (for  earlier results in the non-negative or invertible case, see \cite{Feng2004, Kaenmaki2004}).
\begin{thm}
For any $q>0$, we have $$P({\bf M}, q)=\sup\{h_\mu(\sigma)+q\lambda({\bf M}, \mu):\; \mu\in {\mathcal M}(\A^\N, \sigma)\}.$$
\end{thm}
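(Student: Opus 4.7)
The plan is to recognize the statement as a direct application of the sub-additive variational principle of Cao--Feng--Huang (2008) to the specific sub-additive potential associated with the matrix products, and then to identify both sides of that principle with the objects in the theorem.

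First I would introduce, for each $n \in \N$, the function $\phi_n : \A^\N \to [0, \infty)$ defined by $\phi_n(x) = \|M_{x_1} \cdots M_{x_n}\|^q$, and set $\Phi = \{\log \phi_n\}_{n \ge 1}$. Sub-multiplicativity of the matrix norm (combined with the convention $\log 0 = -\infty$ when some product vanishes) gives
\begin{equation*}
\log \phi_{n+m}(x) \le \log \phi_n(x) + \log \phi_m(\sigma^n x),
\end{equation*}
so $\Phi$ is a sub-additive potential on the full shift $(\A^\N, \sigma)$ in the sense of \cite{CaoFengHuang2008}. Next I would note that $\phi_n$ depends only on the first $n$ coordinates, hence is constant on each cylinder $[I]$ with $I \in \A^n$. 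Consequently the topological pressure in the Cao--Feng--Huang sense reduces to
\begin{equation*}
P(\Phi) = \lim_{n\to \infty} \frac{1}{n} \log \sum_{I \in \A^n} \sup_{x \in [I]} \phi_n(x) = \lim_{n \to \infty} \frac{1}{n}\log \sum_{I \in \A^n} \|M_I\|^q = P({\bf M}, q),
\end{equation*}
where existence of the limit on the right is the subadditivity statement already noted after \eqref{e-pressure}.

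Second, I would apply the sub-additive variational principle of \cite{CaoFengHuang2008}, which yields
\begin{equation*}
P(\Phi) = \sup_{\mu \in \M(\A^\N, \sigma)} \left\{ h_\mu(\sigma) + \Phi_*(\mu) \right\}, \qquad \Phi_*(\mu) := \lim_{n \to \infty} \frac{1}{n} \int \log \phi_n \, d\mu,
\end{equation*}
and identify $\Phi_*(\mu)$ with $q\, \lambda({\bf M}, \mu)$. For ergodic $\mu$ this is immediate from the explicit formula for $\lambda({\bf M}, \mu)$ in Theorem \ref{thm-FK}, since
\begin{equation*}
\frac{1}{n}\int \log \phi_n \, d\mu = \frac{q}{n} \sum_{I \in \A^n} \mu([I]) \log \|M_I\| \longrightarrow q\, \lambda({\bf M}, \mu).
\end{equation*}
For a general (not necessarily ergodic) $\mu \in \M(\A^\N, \sigma)$, both $h_\mu(\sigma)$ and $\Phi_*(\mu)$ are affine in $\mu$ under the ergodic decomposition (the latter because $\mu \mapsto \tfrac{1}{n}\int \log \phi_n\, d\mu$ is affine and the limit is an infimum of affine functions by subadditivity), so the identity $\Phi_*(\mu) = q\,\lambda({\bf M}, \mu)$ extends, and the supremum in the variational principle is unchanged if restricted to ergodic measures.

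The main delicate point is the presence of zero products: when $M_{x_1} \cdots M_{x_n} = {\bf 0}$ one has $\log \phi_n(x) = -\infty$. I would handle this by adopting the standard convention (as in \cite{CaoFengHuang2008}) that $\Phi_*(\mu) = -\infty$ is permitted, noting that any such $\mu$ contributes $-\infty$ to the supremum and may be safely ignored; simultaneously the corresponding terms $\|M_I\|^q = 0$ contribute nothing to the sum defining $P({\bf M}, q)$. Apart from this bookkeeping, the argument is essentially a translation between the language of \cite{CaoFengHuang2008} and the definitions in \eqref{e-pressure} and Theorem \ref{thm-FK}.
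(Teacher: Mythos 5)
Your proposal is correct and is exactly the route the paper intends: the paper states this result as a corollary of the sub-additive variational principle of \cite{CaoFengHuang2008} without further proof, and your argument simply spells out that application (the potential $\phi_n(x)=\|M_{x_1}\cdots M_{x_n}\|^q$, the reduction of the CFH pressure to \eqref{e-pressure} since $\phi_n$ is constant on $n$-cylinders, and the identification $\Phi_*(\mu)=q\lambda({\bf M},\mu)$, which in fact holds for every invariant $\mu$ directly from $\int\log\phi_n\,d\mu=q\sum_{I\in\A^n}\mu([I])\log\|M_I\|$, so the ergodic-decomposition step is not even needed). The handling of zero products via the $-\infty$ convention is consistent with the framework of \cite{CaoFengHuang2008}, so there is no gap.
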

We say that $\mu$ is an {\it equilibrium state} for $({\bf M}, q)$ if it attains the above supremum. 

The following result describes the Gibbs property of matrix equilibrium states. 

\begin{thm}[\cite{FengKaenmaki2011, FengLau2002}]
\label{thm-matrix} 
Suppose that ${\bf M}$ is  irreducible or positively irreducible. Let $q>0$. There exists a unique $\nu=\nu_q\in {\mathcal M}(\A^\N, \sigma)$ such that
 $$
 \nu([i_1\cdots i_n])\approx \exp(-nP({\bf M}, q))\|M_{i_1}\cdots M_{i_n}\|^q\quad \mbox{ for }n\in \N, \; i_1\cdots i_n\in \A^n.
 $$
Moreover, $\nu$ is ergodic and it is the unique equilibrium state for $({\bf M}, q)$. 
\end{thm}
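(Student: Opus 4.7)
My approach is to construct $\nu$ as a sub-additive ``Gibbs measure'' arising from a quasi-multiplicativity property of the potentials $I\mapsto \|M_I\|^{q}$, and then verify uniqueness as an equilibrium state. The decisive technical input, which I would establish first, is the following quasi-multiplicativity lemma: under the irreducibility (or positive irreducibility) hypothesis there exist $c>0$ and $p\in\N$ such that for every $I,J\in\A^{*}$ with $M_{I}\neq{\bf 0}$ and $M_{J}\neq{\bf 0}$, one can find $K\in\A^{*}$ with $|K|\leq p$ satisfying
\begin{equation*}
\|M_{IKJ}\|^{q}\;\geq\; c\,\|M_{I}\|^{q}\,\|M_{J}\|^{q}.
\end{equation*}
In the positively irreducible case this is essentially immediate: the assumption that $\sum_{j=1}^{\ell}(M_{1}+\cdots+M_{k})^{j}$ is strictly positive produces bridging words $K$ of length at most $\ell$ whose product $M_{K}$ has every entry bounded below, and non-negativity of $M_{I}$, $M_{J}$ then pushes the bound through. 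In the general irreducible case the argument is subtler: absence of a common invariant subspace allows one to select finitely many $K_{1},\ldots,K_{s}$ with $\spn\{M_{K_{i}}\}=\R^{d\times d}$, and one then has to translate this algebraic span-filling property into a pointwise lower bound on $\max_{i}\|M_{I}M_{K_{i}}M_{J}\|$.

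With quasi-multiplicativity in hand, I would construct $\nu$ as a weak-$\ast$ accumulation point of the Ces\`aro averages $\bar\mu_{n}:=\frac{1}{n}\sum_{k=0}^{n-1}\sigma_{*}^{k}\mu_{n}$, where $\mu_{n}$ is defined on length-$n$ cylinders by $\mu_{n}([I])=\|M_{I}\|^{q}/Z_{n}$ with $Z_{n}:=\sum_{|J|=n}\|M_{J}\|^{q}$. Sub-multiplicativity of $\|\cdot\|$ yields $Z_{n+m}\leq Z_{n}Z_{m}$, and summing the quasi-multiplicative inequality over all extensions gives $Z_{n+m+p}\gtsim Z_{n}Z_{m}$, which together promote the sub-additive limit to the quantitative estimate $Z_{n}\approx e^{nP({\bf M},q)}$. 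The Gibbs upper bound on $\nu$ then drops out of sub-multiplicativity, while the matching lower bound is produced by the same quasi-multiplicative inequality applied to extensions of $I$ through bridging words; after averaging and passing to the limit one obtains
\begin{equation*}
C^{-1}e^{-|I|P({\bf M},q)}\,\|M_{I}\|^{q}\;\leq\;\nu([I])\;\leq\;C\,e^{-|I|P({\bf M},q)}\,\|M_{I}\|^{q}.
\end{equation*}

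Once the Gibbs estimate is established the remaining claims are essentially routine. Ergodicity of $\nu$ is a standard consequence of the Gibbs bound via quasi-independence of far-apart cylinders. The equilibrium property is verified by taking logarithms in the Gibbs inequality, integrating against $\nu$, dividing by $n$, and identifying the limits $-\frac{1}{n}\sum_{|I|=n}\nu(I)\log\nu(I)\to h_{\nu}(\sigma)$ and $\frac{q}{n}\sum_{|I|=n}\nu(I)\log\|M_{I}\|\to q\lambda({\bf M},\nu)$ via Theorem~\ref{thm-FK}, giving $h_{\nu}(\sigma)+q\lambda({\bf M},\nu)=P({\bf M},q)$. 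For uniqueness, let $\mu$ be any ergodic equilibrium state; the Shannon--McMillan--Breiman theorem applied to $\mu$ and Kingman's theorem applied to the sub-additive potential $\log\|M_{x_{1}\cdots x_{n}}\|^{q}$ give $\frac{1}{n}\log\mu([x_{1}\cdots x_{n}])\to -h_{\mu}(\sigma)$ and $\frac{1}{n}\log\|M_{x_{1}\cdots x_{n}}\|^{q}\to q\lambda({\bf M},\mu)$ $\mu$-a.e., which combined with the Gibbs property forces $\frac{1}{n}\log(\nu/\mu)([x_{1}\cdots x_{n}])\to 0$ $\mu$-a.e.; a Borel--Cantelli / martingale argument then yields $\mu=\nu$.

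The main obstacle is the quasi-multiplicativity lemma in the purely irreducible (non-positive) case. The positively irreducible case is essentially combinatorial, relying on a single universal bridging word provided by strict positivity of a high power of $\sum_{i}M_{i}$; but for general irreducible tuples one must convert an algebraic hypothesis (no common invariant subspace) into a uniform quantitative geometric estimate, and it is precisely this step that requires the Feng--Lau / Feng--K\"aenm\"aki machinery rather than any elementary manipulation.
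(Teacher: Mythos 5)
The paper does not prove this theorem; it is cited to \cite{FengKaenmaki2011,FengLau2002} and used throughout as a black box, so there is no internal proof to compare against. Your outline follows the general strategy of those references: establish quasi-multiplicativity of $I\mapsto\|M_I\|^q$, build a Gibbs measure by a sub-additive Ces\`aro construction, and derive ergodicity, the equilibrium property, and uniqueness from the Gibbs bound. That is the right machinery, and your treatment of the positively irreducible case is correct and genuinely elementary (the bridging word is supplied directly by strict positivity of a power of $\sum_i M_i$, and the chain of inequalities on individual entries of $M_{IKJ}$ goes through because the matrices are non-negative).

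The gap is in the quasi-multiplicativity lemma for a general \emph{irreducible} tuple. You claim that ``absence of a common invariant subspace allows one to select finitely many $K_1,\ldots,K_s$ with $\spn\{M_{K_i}\}=\R^{d\times d}$''. That is Burnside's theorem, which holds only over an algebraically closed field and fails over $\R$. Take $d=2$ and $M_1=M_2=R_\theta$, a rotation by an irrational angle: the tuple is irreducible (no invariant line), yet every $M_K$ is a rotation and $\spn\{M_K:K\in\A^*\}$ is the two-dimensional commutative algebra $\{aI+bR_{\pi/2}\}$, not $\R^{2\times 2}$; complexifying does not rescue the argument since the complexified tuple is reducible. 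The quasi-multiplicativity conclusion is still true (trivially so in that example, since $\|M_K\|\equiv 1$), but your route to it collapses. The argument that actually works, and that the cited references use, is a compactness argument: if the lemma failed, extract $M_{I_n}/\|M_{I_n}\|\to A$ and $M_{J_n}/\|M_{J_n}\|\to B$ with $\|A\|=\|B\|=1$ and $AM_KB=0$ for every finite word $K$, including the empty word; then $V:=\spn\{M_KBx:K\in\A^*,\,x\in\R^d\}$ is a nonzero ${\bf M}$-invariant subspace contained in $\ker A\subsetneq\R^d$, contradicting irreducibility. Replacing the span-filling step by this contradiction argument closes the gap; the rest of the plan (Ces\`aro construction, Gibbs estimate, ergodicity, and uniqueness via Shannon--McMillan--Breiman plus Kingman) is standard and sound modulo routine bookkeeping.
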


\subsection{Irreducible decompositions}
Let ${\bf M}=(M_1,\ldots, M_k)$ be a tuple of non-negative $d\times d$ matrices. Suppose that ${\bf M}$ is non-trivial in the sense that for each $n\in \N$ there exists $i_1\cdots i_n\in \{1,\ldots, k\}^n$
such that $M_{i_1}\cdots M_{i_n}\neq 0$.   It is possible that ${\bf M}$ is not positively irreducible. In such situation, it is an elementary fact (see e.g. \cite[Proposition 1.4]{FengKaenmaki2011}) that one can always find a permutation matrix $T$, $t\in \{1,\ldots,d\}$ and positive integers $d_1,\ldots, d_t$ with $d_1+\cdots+d_t=d$ such that for each $j\in \{1,\ldots, k\}$, $T^{-1}M_jT$ has the following block upper triangular form:
\begin{equation}
T^{-1}M_jT=\left(
\begin{array}{cccc}
M_j^{(1)} & * & \ldots & *\\
0 & M_j^{(2)} & * & \vdots\\
\vdots& & \ddots & * \\
0 & \ldots &  0& M_j^{(t)}
\end{array}
\right)
\end{equation}
with square diagonal blocks of sizes $d_i$, $i=1,\ldots, t$; moreover, for each $i=1,\ldots, t$, the tuple ${\bf M}^{(i)}:=\left(M_1^{(i)},\ldots, M_k^{(i)}\right)$ is either positively irreducible,
or  consisting only of zero matrices $\bf 0$.

Let $\Gamma:=\{1\leq i\leq t: {\bf M}^{(i)} \mbox{ is positively irreducible}\}$. The following property plays a key role in our proof of Theorem \ref{thm-1.4}.

\begin{pro}
[{\cite[Proposition 1.4]{FengKaenmaki2011}}]
\label{pro-FK}
For any ergodic measure $\mu$ on $\A^\N$, we have
$$
\lambda({\bf M},\mu)=\max_{i\in \Gamma}\lambda\left({\bf M}^{(i)},\mu\right),
$$
where $\lambda({\bf M},\mu)$ is the  Lyapunov exponent
of ${\bf M}$ with respect to $\mu$ (see Section~\ref{S-pressure}).
\end{pro}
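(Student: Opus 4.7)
\textbf{Proof plan for Proposition \ref{pro-FK}.}

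The plan is first to reduce to the case where the tuple $\bM$ is already in block upper triangular form. Since conjugating each $M_j$ by the permutation matrix $T$ only multiplies norms by a universal constant, it does not affect the Lyapunov exponent $\lambda(\bM,\mu)$, nor any of the $\lambda(\bM^{(i)},\mu)$. So I may assume the block decomposition holds with $T=I$. For the trivial inequality $\lambda(\bM,\mu)\geq \max_{i\in\Gamma}\lambda(\bM^{(i)},\mu)$, observe that for each $i$ the diagonal block of the product satisfies $(M_{x_1}\cdots M_{x_n})^{(i)} = M^{(i)}_{x_1}\cdots M^{(i)}_{x_n}$, so $\|M_{x_1}\cdots M_{x_n}\| \gtsim \|M^{(i)}_{x_1}\cdots M^{(i)}_{x_n}\|$ and Theorem \ref{thm-FK} applied to each side gives the bound.

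The reverse inequality is proved by induction on $t$, the number of diagonal blocks. For $t=1$ there is nothing to prove. For the inductive step, split
$$
M_j=\begin{pmatrix} M_j^{(1)} & C_j \\ 0 & D_j \end{pmatrix},
$$
where $D_j$ is itself block upper triangular with diagonal blocks $M_j^{(2)},\ldots,M_j^{(t)}$. Writing $\lambda_1=\lambda(\bM^{(1)},\mu)$ (set $-\infty$ if $1\notin\Gamma$) and $\lambda_{\geq 2}=\lambda(\bD,\mu)$, the inductive hypothesis applied to $\bD=(D_1,\ldots,D_k)$ gives $\lambda_{\geq 2}=\max_{i\in\Gamma\cap\{2,\ldots,t\}}\lambda(\bM^{(i)},\mu)$. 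A direct expansion of the product shows
$$
M_{x_1}\cdots M_{x_n}=\begin{pmatrix} M^{(1)}_{x_1}\cdots M^{(1)}_{x_n} & E_n \\ 0 & D_{x_1}\cdots D_{x_n} \end{pmatrix},
$$
with
$$
E_n=\sum_{j=1}^n M^{(1)}_{x_1}\cdots M^{(1)}_{x_{j-1}}\,C_{x_j}\,D_{x_{j+1}}\cdots D_{x_n}.
$$
Since $\|M_{x_1}\cdots M_{x_n}\|\approx \|M^{(1)}_{x_1}\cdots M^{(1)}_{x_n}\|+\|E_n\|+\|D_{x_1}\cdots D_{x_n}\|$, it suffices to show that $\limsup \frac{1}{n}\log\|E_n\|\leq \max(\lambda_1,\lambda_{\geq 2})$ for $\mu$-a.e.\ $x$.

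Using $\|C_{x_j}\|\leq C$ and bounding the sum by $n$ times its largest term, the task reduces to the uniform estimate
$$
\max_{1\leq j\leq n}\|M^{(1)}_{x_1}\cdots M^{(1)}_{x_{j-1}}\|\cdot \|D_{x_{j+1}}\cdots D_{x_n}\|\leq e^{n(\max(\lambda_1,\lambda_{\geq 2})+\varepsilon)}
$$
for $\mu$-a.e.\ $x$ and all large $n$. For any fixed $\varepsilon>0$, Kingman's subadditive ergodic theorem applied to the family $\log\|M^{(1)}_{x_1}\cdots M^{(1)}_{x_m}\|$ gives this bound once $j-1\geq N(x,\varepsilon)$, and applied to $\log\|D_{y_1}\cdots D_{y_m}\|$ at each shifted point $y=\sigma^j x$ gives the companion bound once $n-j\geq N(\sigma^jx,\varepsilon)$. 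The crucial point is that, since $\mu$ is $\sigma$-invariant and the exceptional set in Kingman's theorem is $\mu$-null, the set $\bigcup_{j\geq 0}\sigma^{-j}(\text{null})$ is also $\mu$-null, so the convergence on shifts holds simultaneously for all $j\geq 0$ and $\mu$-a.e.\ $x$. The indices $j$ with either $j-1$ or $n-j$ less than a fixed threshold contribute only a bounded number of terms, each dominated by $e^{n(\max(\lambda_1,\lambda_{\geq 2})+\varepsilon)}$ using the crude bound $\|M^{(1)}_{w}\|\leq K^{|w|}$ (similarly for $D$) together with the a.e.\ Lyapunov bound on the complementary factor. Letting $\varepsilon\to 0$ completes the induction.

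The main technical obstacle is precisely this last uniformity-in-$j$ issue: Kingman's theorem gives a.e.\ limits of $\frac{1}{m}\log\|\cdot_{x_1\cdots x_m}\|$, but one has to combine it with the invariance of $\mu$ under $\sigma$ (or a standard Egorov/maximal-ergodic argument) to make the bound hold simultaneously at every cut-point $j$ inside $\{1,\ldots,n\}$ on a set of full $\mu$-measure.
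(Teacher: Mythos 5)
Your reduction to the block upper triangular case, the induction on the number $t$ of diagonal blocks, the expansion of the off-diagonal block $E_n$, and the easy inequality $\lambda(\bM,\mu)\geq\max_{i}\lambda(\bM^{(i)},\mu)$ are all fine. Note that the paper itself does not reprove this statement; it simply cites \cite[Proposition~1.4]{FengKaenmaki2011} with a remark that the argument there carries over, so the honest comparison is with that reference's subadditive argument rather than with a proof in the paper.

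The reverse inequality, however, has a genuine gap at exactly the point you flag as ``the main technical obstacle,'' and the proposed resolution does not close it. You need, for $\mu$-a.e.\ $x$ and all large $n$,
\[
\max_{1\le j\le n}\Bigl(\log\|M^{(1)}_{x_1\cdots x_{j-1}}\|+\log\|D_{x_{j+1}\cdots x_n}\|\Bigr)\le n\bigl(\max(\lambda_1,\lambda_{\ge2})+\varepsilon\bigr).
\]
Kingman's theorem at the shifted point $\sigma^j x$ gives control of the second factor only once $n-j\ge N(\sigma^j x,\varepsilon)$, and this threshold is a random variable in the base point, finite a.e.\ but not bounded. Taking the union $\bigcup_{j\ge0}\sigma^{-j}(\text{null set})$ only guarantees a.e.\ convergence at every shift; it does not produce a threshold that is uniform in $j$. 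Your ``fixed threshold'' dichotomy handles $j$ near $1$ or near $n$, but when $j$ is a fixed proportion of $n$ (say $j=n/2$) both $j-1$ and $n-j$ are large, yet the inequality $n-j\ge N(\sigma^j x,\varepsilon)$ can fail along a subsequence because $N(\sigma^{n/2}x,\varepsilon)$ can grow with $n$; the crude bound $\|D_w\|\le K^{|w|}$ with $K>e^{\lambda_{\ge2}}$ is then useless since the exponent $n-j$ is of order $n$. An Egorov set $F_\delta$ of measure $1-\delta$ with uniform rate does not help either: a positive fraction of the shifts $\sigma^j x$ fall outside $F_\delta$, and a single bad $j$ ruins the maximum.

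The correct way to close the gap is to abandon the pointwise estimate at shifted base points and instead argue at the level of $L^1$ averages. Set $f_j(x)=\log\|M^{(1)}_{x_1\cdots x_j}\|$, $g_j(x)=\log\|D_{x_1\cdots x_j}\|$, and
\[
h_n(x):=\max_{0\le j\le n}\bigl(f_j(x)+g_{n-j}(\sigma^j x)\bigr).
\]
One checks directly from $f_{a+b}\le f_a+f_b\circ\sigma^a$ and $g_{a+b}\le g_a+g_b\circ\sigma^a$ that $(h_n)$ is itself subadditive, so $\frac1n\int h_n\,d\mu\to\inf_n\frac1n\int h_n\,d\mu$. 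Cutting $j$ into residues mod a fixed $m$ and telescoping, one gets $h_{nm}(x)\le\sum_{s=0}^{n-1}\max\bigl(f_m,g_m\bigr)(\sigma^{sm}x)+O(m)$; integrating and dividing by $nm$ gives $\inf_n\frac1n\int h_n\le\frac1m\int\max(f_m,g_m)\,d\mu$. Since the matrices are bounded, $\frac1m\max(f_m,g_m)$ is uniformly bounded above, and by Kingman it converges a.e.\ to the constant $\max(\lambda_1,\lambda_{\ge2})$; dominated convergence then gives $\limsup_m\frac1m\int\max(f_m,g_m)\le\max(\lambda_1,\lambda_{\ge2})$. Combined with $\log\|E_n\|\le\log(nC)+h_n$ and the additive $\log n$ being negligible, this yields $\lambda(\bM,\mu)\le\max(\lambda_1,\lambda_{\ge2})$. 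This is in the spirit of the cited reference; the essential extra ingredient your plan is missing is the subadditivity of the cross-term cocycle $h_n$ together with the blocking and dominated-convergence step, which is what actually delivers the uniformity over the cut point $j$.
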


We remark that  the above proposition was only proved in \cite{FengKaenmaki2011} for  different irreducible decompositions. But the proof therein works well in our new setting.

\section{Irreducible tuples of non-negative matrices}
\label{S-3}

Throughout this section, let ${\bf M}=(M_1, \dots, M_k)$ be a tuple of non-negative $d \times d$ matrices, and suppose that ${\bf M}$ is positively irreducible.
We give several properties of ${\bf M}$,  some of which will be needed in the proof of Theorem \ref{thm-1.4}.

We begin with a simple fact.

\begin{lem}[{\cite[Lemma 8.4.1]{HornJohnson1985}}]
\label{lem-irr}
$\sum_{\ell=1}^d(M_1+\cdots+M_k)^\ell$ is a positive matrix.
\end{lem}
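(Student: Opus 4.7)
The plan is to recast positive irreducibility of $\bf M$ as a strong connectivity statement for a directed graph, then invoke a sharp graph-theoretic bound on walk lengths. Set $N = M_1 + \cdots + M_k$ and associate to $N$ the directed graph $G_N$ on vertex set $\{1, \ldots, d\}$ with an edge $i \to j$ iff $N_{ij} > 0$. By induction on $m$, the entry $(N^m)_{ij}$ is positive precisely when there is a directed walk from $i$ to $j$ of length exactly $m$ in $G_N$. Consequently, positive irreducibility of $\bf M$, which provides some $\ell_0 \in \N$ with $\sum_{j=1}^{\ell_0} N^j$ strictly positive, is equivalent to saying that for every ordered pair $(i,j)$ there is a walk from $i$ to $j$ of some length in $\{1, \ldots, \ell_0\}$, i.e., $G_N$ is strongly connected.

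The next step is the standard sharpening: in a strongly connected digraph on $d$ vertices, every ordered pair $(i,j)$ is joined by a walk of length at most $d$. If $i \neq j$, a shortest $i$-to-$j$ path is simple, visits at most $d$ vertices, and hence has length at most $d-1$. If $i = j$, strong connectivity forces the existence of a directed cycle through $i$, and the shortest such cycle is simple, so of length at most $d$. In either case some $m \in \{1, \ldots, d\}$ satisfies $(N^m)_{ij} > 0$, and summing gives $\left(\sum_{\ell=1}^d N^\ell\right)_{ij} > 0$ for every $i,j$.

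I do not expect any serious obstacle: the only subtlety is making sure the bound $d$ (rather than $d-1$) is correct, which is exactly what the diagonal case $i = j$ forces, since there one needs a cycle rather than a simple path. The argument can either be written out in this short combinatorial form or replaced by a direct citation of \cite[Lemma 8.4.1]{HornJohnson1985}, which is the classical Perron--Frobenius-style statement being quoted.
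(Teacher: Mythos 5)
Your proof is correct, and the combinatorial argument you give (strong connectivity of the digraph of $N=M_1+\cdots+M_k$, shortest simple path of length at most $d-1$ for $i\neq j$, shortest simple cycle of length at most $d$ for $i=j$) is exactly the standard one underlying the cited Horn--Johnson result; the paper itself gives no proof and merely points to \cite[Lemma 8.4.1]{HornJohnson1985}. Your observation that the diagonal case is what forces the bound $d$ rather than $d-1$ is the right subtlety, and it also explains why the paper's formulation starts the sum at $\ell=1$ and goes to $d$, rather than using the $(I+N)^{d-1}$ form in Horn--Johnson.
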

  Set $\A = \{1, \ldots, k \}$ and let $Y_{\bf M}$ be defined as in \eqref{e-YM}.

\begin{pro} \label{Y is irred sofic}
$Y_{{\bf M}}$ is an irreducible  sofic shift over $\A$. Moreover,
$$
\mathcal L_n(Y_{{\bf M}})=\{J\in \A^n:\; M_J\neq {\bf 0}\},\quad n\in \N,
$$
where $\mathcal L_n(Y_{{\bf M}})$ stands for the collection of admissible words of length $n$ in $Y_{{\bf M}}$ (see Section~\ref{S-3.1}).
\end{pro}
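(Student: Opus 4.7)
My plan is to exhibit $Y_{\bf M}$ as a one-block factor of an irreducible edge shift, which gives the sofic structure by definition, and then to derive the admissible-word characterization from this presentation together with Lemma \ref{lem-irr}. First I would construct a labeled directed multigraph $G$ on vertex set $V=\{1,\dots,d\}$ in which, for each $j\in\A$ and each ordered pair $(a,b)$ with $(M_j)_{a,b}>0$, one edge from $a$ to $b$ is drawn and assigned the label $j$. Lemma \ref{lem-irr} asserts that $\sum_{\ell=1}^{d}(M_1+\cdots+M_k)^\ell$ is positive, so between any two vertices of $G$ there is a directed path; hence $G$ is strongly connected and, in particular, every vertex has an outgoing edge. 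Let $X$ be the edge shift of $G$: it is an SFT on the finite alphabet of edges, and strong connectedness of $G$ is the standard sufficient condition for $X$ to be irreducible. The map $\pi:X\to\A^\N$ sending each edge to its label is a continuous, shift-commuting, one-block map, so once $\pi(X)=Y_{\bf M}$ is verified, the sofic assertion is immediate.

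To prove $\pi(X)=Y_{\bf M}$, the inclusion $\pi(X)\subseteq Y_{\bf M}$ is immediate: any $(j_m)=\pi((e_m))$ with $e_m$ an edge from $a_{m-1}$ to $a_m$ satisfies
\[
(M_{j_1\cdots j_m})_{a_0,a_m}\ \ge\ \prod_{i=1}^m(M_{j_i})_{a_{i-1},a_i}\ >\ 0,
\]
so every initial product is non-zero. For the reverse inclusion, fix $(j_m)\in Y_{\bf M}$ and, for each $m$, let $S_m\subseteq V$ be the set of starting vertices of length-$m$ $G$-paths with label sequence $j_1\cdots j_m$; truncating the last edge of such a path gives $S_{m+1}\subseteq S_m$, while $M_{j_1\cdots j_m}\ne{\bf 0}$ forces $S_m\ne\emptyset$. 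Being a decreasing chain of non-empty subsets of the finite set $V$, $\bigcap_m S_m$ is non-empty; choosing $a_0$ in it and applying K\"onig's lemma to the finitely-branching tree of label-$j_1 j_2\cdots$ paths out of $a_0$ yields a point of $X$ whose $\pi$-image is $(j_m)$.

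It remains to identify $\mathcal L_n(Y_{\bf M})$. The inclusion $\mathcal L_n(Y_{\bf M})\subseteq\{J\in\A^n:M_J\ne{\bf 0}\}$ is immediate from the defining property of $Y_{\bf M}$. Conversely, if $M_J\ne{\bf 0}$ then $J$ labels a finite $G$-path; since $G$ is strongly connected (hence has no sinks), this path can be extended edge by edge indefinitely, and the resulting infinite $G$-path projects under $\pi$ into $Y_{\bf M}$ and begins with $J$, showing $J\in\mathcal L_n(Y_{\bf M})$. The only step that needs real care is the K\"onig/compactness argument of the previous paragraph: one must track the starting vertex uniformly across all lengths rather than merely producing arbitrarily long finite paths, and the nested-finite-sets trick is precisely what makes this work.
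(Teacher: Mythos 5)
Your proof is correct and takes essentially the same route as the paper's: both present $Y_{\bf M}$ as a one-block factor of an irreducible shift of finite type built from the nonzero entries of the $M_j$, with the needed strong connectedness coming from Lemma~\ref{lem-irr}. The paper uses a vertex shift on state-letter pairs $(i,j)\in\mathcal D\times\A$ where you use the edge shift of the labeled graph on $\{1,\dots,d\}$ (interchangeable presentations), and you spell out the nested-sets/K\"onig compactness argument for $Y_{\bf M}\subseteq\pi(X)$ that the paper leaves implicit in the line ``by \eqref{e-equiv}, $Y_{\bf M}=\pi(\Sigma_A)$.''
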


\begin{proof}
The result is most likely known, but we have not been able to find a reference so a proof is given for the reader's convenience.

Set $\D = \{1, \ldots, d\}$. Construct a subset $\mathcal{F}$ of $\D \times \mathcal{A}$ by
\[
\mathcal{F} =\left \{ (i,j)\in \D \times \mathcal{A}:\; \text{there exists } l \in \D \text{ such that } (M_j)_{i,l}>0\right\}.
\]
Define a $0$-$1$  matrix $A=(A_{u,v})_{u, v\in \mathcal F}$ by
\[
A_{(i,j),(i',j')} = \begin{cases}
1 & \text{ if }\; (M_j)_{i, i'} >0, \\0 & \text{ otherwise. }
\end{cases}
\]

Let $\Sigma_A$ be the subshift of finite type over ${\mathcal F}$ associated with $A$. We first show that $\Sigma_A$ is irreducible.  Fix $(i,j), (i',j') \in \mathcal{F}$.  By definition, $(M_j)_{i,i_1} >0$ for some $i_1 \in \D$.
Since ${\bf M}$ is positively irreducible, there exist $n\in \N$ and  $j_1\cdots j_n \in \mathcal{A}^n$ such that
$(M_{j_1\cdots j_n})_{i_1, i'} >0$. Therefore we can find $i_2, \dots, i_n \in \D$ such that
\[
(M_{j_1})_{i_1,i_2}\cdots (M_{j_{n-1}})_{i_{n-1}, i_n} (M_{j_n})_{i_n, i'} >0.
\]
Hence the word $(i,j)(i_1,j_1)\cdots (i_n, j_n)(i',j')$ is $A$-admissible. Therefore $\Sigma_A$ is irreducible.

Notice that
 $$
 (i_1,j_1)\cdots (i_n, j_n) \in \L(\Sigma_A) \Longleftrightarrow  (M_{j_1})_{i_1, i_2} \cdots (M_{j_{n-1}})_{i_{n-1}, i_n} >0.
 $$
It follows that
\begin{equation}
\label{e-equiv}
M_{j_1\ldots j_n}\neq  0 \Longleftrightarrow (i_1,j_1)\cdots (i_n, j_n) \in \L(\Sigma_A) \mbox{ for some }i_1,\ldots, i_n\in \D.
\end{equation}

Define  $\tau : \mathcal{F} \to \mathcal{A}$ by $(i,j)\mapsto j$.  Extend $\tau $ to a map $\pi: \mathcal{F}^\N\to \mathcal{A}^\N$ by
$$
\pi\left((x_n)_{n=1}^\infty\right)=\left(\tau(x_n)\right)_{n=1}^\infty.
$$
 By \eqref{e-equiv}, we have $Y_{{\bf M}}=\pi(\Sigma_A)$. Clearly, $\pi$ is a factor map.  Hence $Y_{{\bf M}}$ is an irreducible sofic shift.
 By \eqref{e-equiv}, we also have $\mathcal L_n(Y_{{\bf M}})=\{J\in \A^n:\; M_J\neq {\bf 0}\}$ for $n\in \N$.
 \end{proof}

Recall that the pressure function $P({\bf M}, \cdot)$ is defined as in \eqref{e-pressure}. Write  \begin{equation}
\label{e-pM}
P({\bf M}):=P({\bf M}, 1)
\end{equation}
 and call it  the {\it topological pressure} of ${\bf M}$. 
 
\begin{lem}
$P({\bf M})=\log \rho(M_1+\cdots+ M_k)$.
\end{lem}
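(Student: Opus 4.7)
The plan is to exploit the fact that matrix multiplication distributes over addition, which lets us identify the sum in the pressure with a power of $M_1+\cdots+M_k$.

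First, I would set $S = M_1 + \cdots + M_k$ and observe that by expanding the product,
\[
S^n = \sum_{i_1,\ldots, i_n \in \A} M_{i_1}\cdots M_{i_n} \qquad \text{for every } n \in \N.
\]
This uses only distributivity of matrix multiplication over addition (no commutativity), so it holds for any $k$ and any dimension.

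Next I would pass from this matrix identity to an identity of norms. Since each $M_j$ is non-negative, every product $M_{i_1}\cdots M_{i_n}$ is entry-wise non-negative, and so is $S^n$. For the matrix norm $\|A\| = \sum_{i,j}|a_{i,j}|$ fixed at the beginning of Section~\ref{S-pressure}, the norm is additive on non-negative matrices, hence
\[
\|S^n\| = \sum_{i_1,\ldots, i_n \in \A} \|M_{i_1}\cdots M_{i_n}\|.
\]

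Finally, combining this with the definition of the pressure at $q=1$ gives
\[
P({\bf M}) = \lim_{n\to \infty} \frac{1}{n}\log \|S^n\|,
\]
and Gelfand's formula $\lim_{n\to\infty}\|S^n\|^{1/n}=\rho(S)$ yields $P({\bf M})=\log\rho(M_1+\cdots+M_k)$. There is no real obstacle here; the only thing to be careful about is that the argument relies on non-negativity of the entries (so that norms of the summands add), which is exactly the standing hypothesis of the section.
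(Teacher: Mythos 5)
Your proof is correct and takes essentially the same route as the paper's: both rewrite $\sum_{i_1\cdots i_n}\|M_{i_1}\cdots M_{i_n}\|$ as $\|(M_1+\cdots+M_k)^n\|$ using non-negativity, then apply Gelfand's formula. You are merely a bit more explicit about the distributivity step and the choice of norm.
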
 
\begin{proof} Since $M_i$ are non-negative, we have
 $$
 \sum_{i_1\cdots i_n\in \A^n} \|M_{i_1}\cdots M_{i_n}\|= \left\|\sum_{i_1\cdots i_n\in \A^n} M_{i_1}\cdots M_{i_n}\right\|=\|(M_1+\cdots +M_k)^n\|. 
 $$
Now the lemma follows from the definition of $P({\bf M})$ and  Gelfand's Formula.
\end{proof}

Let $r({\bf M})$ be defined as in \eqref{e-RM}. 
\begin{de}
 We say that ${\bf M}$ is normalized if $r({\bf M})=1$.
\end{de}

\begin{rem}
\label{rem-1}
Let $a>0$. Then $P(a{\bf M})=P({\bf M})+\log a$ and  $Y_{a{\bf M}}=Y_{\bf M}$. Hence $r(a{\bf M})=ar({\bf M})$.
\end{rem}

\begin{lem}
\label{lem-normalize}
\begin{itemize}
\item[(i)] $\frac{1}{r({\bf M})}{\bf M}$ is normalized.
\item[(ii)]  ${\bf M}$ has a uniform Lyapunov exponent modulo $0$ if and only if
$$
\|M_J\|\approx (r({\bf M}))^{|J|} \quad \mbox { for }J\in \mathcal L(Y_{\bf M}).
$$
\end{itemize}
\end{lem}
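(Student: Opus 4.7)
Part (i) is immediate from Remark~\ref{rem-1}. Since $\bf M$ is positively irreducible, Lemma~\ref{lem-irr} ensures $\sum_{\ell=1}^d(M_1+\cdots+M_k)^\ell$ is a positive matrix, so $\rho(M_1+\cdots+M_k)>0$ and hence $r({\bf M})>0$. Taking $a=1/r({\bf M})$ in the identity $r(a{\bf M})=ar({\bf M})$ from Remark~\ref{rem-1} yields $r\bigl((1/r({\bf M})){\bf M}\bigr)=1$, which is the required normalization.

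For the ``$\Leftarrow$'' direction of (ii), I would invoke Proposition~\ref{Y is irred sofic}, which identifies $\mathcal{L}_n(Y_{\bf M})=\{J\in\A^n:M_J\neq{\bf 0}\}$. Given $\|M_J\|\approx r({\bf M})^{|J|}$ on $\mathcal{L}(Y_{\bf M})$, any $i_1\cdots i_n\in\A^n$ either satisfies $M_{i_1\cdots i_n}={\bf 0}$, or belongs to $\mathcal{L}_n(Y_{\bf M})$ and hence $C^{-1}r({\bf M})^n\le\|M_{i_1\cdots i_n}\|\le Cr({\bf M})^n$ for some uniform $C$. This is exactly \eqref{e-ULE} with $\lambda=\log r({\bf M})$.

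The substantive direction is ``$\Rightarrow$''. Assume \eqref{e-ULE} with constants $C,\lambda$. By Proposition~\ref{Y is irred sofic}, this is equivalent to $\|M_J\|\approx e^{\lambda|J|}$ for $J\in\mathcal{L}(Y_{\bf M})$, so the task reduces to proving $e^\lambda=r({\bf M})$. My plan is to compute the exponential growth rate of $S_n:=\sum_{J\in\A^n}\|M_J\|$ in two ways. Since the $M_i$ are non-negative and the fixed norm satisfies $\|A\|=\sum_{i,j}|a_{i,j}|=\sum_{i,j}a_{i,j}$ on non-negative $A$, we have $S_n=\|(M_1+\cdots+M_k)^n\|$; by the lemma immediately preceding Definition~3.3, $\lim_n\frac{1}{n}\log S_n=P({\bf M})=\log\rho(M_1+\cdots+M_k)$. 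On the other hand, terms with $M_J={\bf 0}$ contribute nothing, so $S_n=\sum_{J\in\mathcal{L}_n(Y_{\bf M})}\|M_J\|\approx\#\mathcal{L}_n(Y_{\bf M})\cdot e^{\lambda n}$, and by Theorem~\ref{thm-Parry} (applicable since $Y_{\bf M}$ is an irreducible sofic shift by Proposition~\ref{Y is irred sofic}), $\#\mathcal{L}_n(Y_{\bf M})\approx e^{nh_{\mathrm{top}}(Y_{\bf M})}$. Hence the growth rate of $S_n$ equals $h_{\mathrm{top}}(Y_{\bf M})+\lambda$. Equating the two computed rates gives $\lambda=\log\rho(M_1+\cdots+M_k)-h_{\mathrm{top}}(Y_{\bf M})=\log r({\bf M})$, as required.

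The only nontrivial input is Parry's theorem for irreducible sofic shifts, which provides the sharp two-sided count $\#\mathcal{L}_n(Y_{\bf M})\approx e^{nh_{\mathrm{top}}(Y_{\bf M})}$; the matching lower bound is what allows us to pin $\lambda$ down rather than merely bound it. Everything else is a routine combination of non-negativity of the $M_i$ and Gelfand's formula.
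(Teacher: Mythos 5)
Your proof is correct and follows essentially the same route as the paper: part (i) from Remark~\ref{rem-1}, and for part (ii) the growth rate of $S_n=\sum_{J\in\A^n}\|M_J\|$ is computed two ways and equated. The one point of divergence is that you invoke Theorem~\ref{thm-Parry} to get the sharp two-sided count $\#\mathcal{L}_n(Y_{\bf M})\approx e^{nh_{\rm top}(Y_{\bf M})}$, whereas the paper simply uses the definitional limit $h_{\rm top}(Y_{\bf M})=\lim_n\frac{1}{n}\log\#\mathcal{L}_n(Y_{\bf M})$ (which exists by subadditivity); your closing remark that the Parry lower bound is what ``pins $\lambda$ down'' is a small misapprehension, since the existence of the limit already suffices once you take $\frac{1}{n}\log$ and let the multiplicative constants vanish.
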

\begin{proof}
Property (i) follows from Remark \ref{rem-1}. Next we prove (ii). By the definition of $Y_{\bf M}$, we see that  ${\bf M}$ has a uniform Lyapunov exponent modulo $0$ if and only if there exists a constant $\lambda\in \R$ such that
\begin{equation}
\label{e-MJ}
\|M_J\|\approx \exp(\lambda |J|) \quad \mbox { for }J\in \mathcal L(Y_{\bf M}).
\end{equation}
To show (ii), it suffices to show that
\begin{equation}
\label{e-lambda'}
\lambda=\log r({\bf M})=P({\bf M})-h_{\rm top}(Y_{\bf M})
\end{equation} when \eqref{e-MJ} holds.

Now suppose  \eqref{e-MJ} holds. Then
\begin{eqnarray*}
 \sum_{i_1\cdots i_n\in \A^n} \|M_{i_1\cdots i_n}\|&=&\sum_{i_1\cdots i_n\in \mathcal L_n(Y)} \|M_{i_1 \cdots i_n}\|\\
 &\approx & e^{\lambda n} \#  (\mathcal L_n(Y)).
\end{eqnarray*}
Hence by definition,  $P({\bf M})=\lambda+\lim_{n\to \infty} (1/n)\log \#  (\mathcal L_n(Y))=\lambda+h_{\rm top}(Y_{\bf M})$, and \eqref{e-lambda'} holds.
\end{proof}

\begin{pro}\label{lower bound implies upper bound}
Suppose furthermore  that  ${\bf M}$ is normalized. Then the following three statements are equivalent.
\begin{enumerate}
\item
$\| M_J \| \gtsim 1$  for  $J \in \L(Y_{\bf M})$.
\item
$\| M_J \| \ltsim 1$ for  $J \in \L(Y_{\bf M})$.
\item
$\| M_J \| \approx 1$ for  $J \in \L(Y_{\bf M})$.
\end{enumerate}
\end{pro}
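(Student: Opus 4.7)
The implications $(3)\Rightarrow (1)$ and $(3)\Rightarrow (2)$ are trivial, so the plan is to establish $(1)\Rightarrow (3)$ and $(2)\Rightarrow (3)$ by a symmetric argument anchored on two ergodic $\sigma$-invariant measures on $Y_{\bf M}$.

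First I would set up the two relevant measures. By Proposition~\ref{Y is irred sofic}, $Y_{\bf M}$ is an irreducible sofic shift, so Theorem~\ref{thm-Parry} furnishes the Parry measure $\mu$ on $Y_{\bf M}$, which is ergodic and satisfies $\mu([J])\approx e^{-nh_{\rm top}(Y_{\bf M})}$ for $J\in\L_n(Y_{\bf M})$. On the other hand, Theorem~\ref{thm-matrix} applied with $q=1$ yields the unique equilibrium state $\nu$ for $({\bf M},1)$; it is ergodic, supported on $Y_{\bf M}$ (because the Gibbs estimate $\nu([J])\approx e^{-nP({\bf M})}\|M_J\|$ forces $\nu([J])=0$ whenever $J\notin\L_n(Y_{\bf M})$), and enjoys the Gibbs bound just quoted. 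The normalization hypothesis $r({\bf M})=1$ is precisely the statement $P({\bf M})=h_{\rm top}(Y_{\bf M})$, so combining the two estimates I obtain the clean identity
\[
\|M_J\|\approx \frac{\nu([J])}{\mu([J])},\qquad J\in\L(Y_{\bf M}).
\]

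To deduce $(1)\Rightarrow (3)$, I would translate the hypothesis $\|M_J\|\gtsim 1$ into the cylinder-wise inequality $\nu([J])\gtsim \mu([J])$. Extending by additivity and outer regularity promotes this to a uniform lower bound $\nu\geq c\mu$ as Borel measures on $Y_{\bf M}$; in particular $\mu\ll\nu$. The decisive step is then the standard ergodic-theoretic dichotomy that two $\sigma$-invariant ergodic probability measures are either equal or mutually singular (a direct consequence of the Birkhoff ergodic theorem applied to a set separating them). Since $\mu\ll\nu$ rules out mutual singularity, I conclude $\mu=\nu$, and the displayed identity collapses to $\|M_J\|\approx 1$, giving (3). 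The implication $(2)\Rightarrow (3)$ is entirely symmetric: $\|M_J\|\ltsim 1$ gives $\nu\leq C\mu$, hence $\nu\ll\mu$, and ergodicity again forces $\mu=\nu$.

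The main obstacle is conceptual rather than computational: it lies in upgrading the one-sided cylinder comparison to a one-sided comparison of Borel measures and then collapsing it into equality via the ergodic dichotomy. All the analytic heavy lifting is already packaged in Theorems~\ref{thm-matrix} and~\ref{thm-Parry}, and the normalization is what makes the two exponential factors cancel so that Gibbs and Parry estimates can be compared term by term. A minor technical point I would verify is that the cylinder-level comparison does extend to all Borel sets with a common constant (via a standard monotone-class or outer-measure argument), so that the absolute continuity statement entering the ergodic dichotomy is rigorous.
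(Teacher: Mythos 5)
Your proof is correct and follows essentially the same route as the paper: both compare the Parry measure with the $({\bf M},1)$ equilibrium state via their Gibbs/Parry estimates, use the normalization $P({\bf M})=h_{\rm top}(Y_{\bf M})$ to cancel the exponential factors, and invoke the mutual-singularity dichotomy for distinct ergodic measures to force the two measures to coincide. The only cosmetic differences are a swap of the names $\mu,\nu$ and your choice to prove $(1)\Rightarrow(3)$ and $(2)\Rightarrow(3)$ rather than $(1)\Leftrightarrow(2)$; your remark on upgrading the cylinder-wise comparison to a measure-wise one is a reasonable technical note that the paper leaves implicit.
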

\begin{proof}
It suffices to show that (1) is equivalent to (2).  By Proposition \ref{Y is irred sofic},  $Y_{\bf M}$ is an irreducible sofic shift over $\A$. Let $\nu$ denote the Parry measure on
 $Y_{\bf M}$ and $\mu$ the equilibrium measure for  $({\bf M},1)$.  By Theorems \ref{thm-Parry}-\ref{thm-matrix}, we have
 \begin{equation}
 \label{e-app}
 \begin{split}
 \nu([J]) & \approx \exp(-|J|h_{\rm top}(Y_{\bf M})), \\
  \mu([J])& \approx \|M_J\|\exp(-|J|P({\bf M}))
 \end{split}
 \end{equation}
 for $J\in \mathcal L(Y_{\bf M})$.
 Since  ${\bf M}$ is normalized, we have $h_{\rm top}(Y_{\bf M})= P({\bf M})$. Thus by \eqref{e-app}, we have
\begin{equation}
 \label{e-app1}
 \mu([J])\approx  \|M_J\|\cdot  \nu([J]) \quad \mbox{ for } J\in \mathcal L(Y_{\bf M}).
 \end{equation}
Below we show that (1) is equivalent to (2).

In one direction, if  (1) holds, then $\mu([J])\gtsim \nu([J])$ for $J\in \mathcal L(Y_{\bf M})$ by \eqref{e-app1},  which implies
$\nu\ll \mu$, and so $\nu=\mu$. Here we use the  fact that any two distinct ergodic measures on $Y_{\bf M}$ are mutually singular  (see, e.g. \cite[Theorem 6.10]{Walters1982}).
This together with \eqref{e-app1} yields $\|M_J\|\approx 1$ for  $J\in \mathcal L(Y_{\bf M})$. Hence (2) holds.

In the other direction, if (2) holds, then $\mu([J])\ltsim \nu([J])$ for $J\in \mathcal L(Y_{\bf M})$ by \eqref{e-app1}, which implies
$\mu\ll \nu$, and so $\nu=\mu$. Again we have $\|M_J\|\approx 1$ for  $J\in \mathcal L(Y_{\bf M})$. Hence (1) holds. This completes the proof.
\end{proof}

In the end, let $B$ be defined as in \eqref{e-B} and let $T$ be a permutation matrix so that $T^{-1}BT$ is a block upper triangular matrix of the form  in \eqref{e-blockB}, and for each $1\leq i\leq t$, either $B^{(i)}$ is positively irreducible or $B^{(i)}={\bf 0}$. Then we have the following result.
\begin{lem}
\label{lem-3.7}
For any $J\in \A^*\backslash\{\varepsilon\}$ with $(M_J)_{1,1}>0$, $T^{-1}M_JT$ is also a  block upper triangular matrix with the same block sizes as in \eqref{e-B}.
Moreover, for each $1 \le i \le t$, $B^{(i)} = {\bf 0}$ if and only if $M_J^{(i)} = {\bf 0}$ for all $J \in \A^*\setminus\{\varepsilon\}$ with $(M_J)_{1,1}>0$.
\end{lem}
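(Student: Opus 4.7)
My plan is to reduce both assertions to the following \emph{capturing principle}: for every $r, s \in \{1, \ldots, d\}$,
$$
B_{r,s} = 0 \ \Longleftrightarrow \ (M_J)_{r,s} = 0 \text{ for every } J \in \A^*\setminus\{\varepsilon\} \text{ with } (M_J)_{1,1} > 0.
$$
Granted this, the lemma follows at once. The block upper triangular form \eqref{e-blockB} of $T^{-1}BT$ asserts precisely that $B_{r,s} = 0$ whenever $r$ and $s$ lie in block rows $p$ and $q$ with $p > q$ (in the permuted coordinates); by the capturing principle, the same vanishing then propagates to every $T^{-1}M_J T$, giving the first conclusion. Specializing $r, s$ to both lie in the index set of the $i$-th diagonal block gives the second conclusion, whose $(\Leftarrow)$ direction is anyway immediate since $B$ is the convex combination $\frac{1}{\#\mathcal{J}}\sum_{K \in \mathcal{J}} M_K$.

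The $(\Leftarrow)$ direction of the capturing principle itself is immediate from the same convex combination together with non-negativity of the $M_K$. For the $(\Rightarrow)$ direction I would argue the contrapositive: given $J \in \A^*\setminus\{\varepsilon\}$ with $(M_J)_{1,1} > 0$ and $(M_J)_{r,s} > 0$, I would exhibit some $K \in \mathcal{J}$ with $(M_K)_{r,s} > 0$ by a combinatorial length reduction.

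To carry out this reduction, I would introduce the \emph{product graph} $G$ on vertex set $\{1, \ldots, d\}^2$, with a directed edge from $(a,b)$ to $(a',b')$ whenever there exists some $j \in \A$ satisfying both $(M_j)_{a,a'} > 0$ and $(M_j)_{b,b'} > 0$. Expanding the two positive entries $(M_J)_{1,1}$ and $(M_J)_{r,s}$ as sums of products produces synchronized index sequences $(1 = a_0, a_1, \ldots, a_n = 1)$ and $(r = b_0, b_1, \ldots, b_n = s)$ of positive contribution; together these trace a walk in $G$ of length $n \ge 1$ from $(1, r)$ to $(1, s)$. Conversely, any walk in $G$ from $(1, r)$ to $(1, s)$ of length $n$ encodes a word $K \in \A^n$ with both $(M_K)_{1,1} > 0$ and $(M_K)_{r,s} > 0$. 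Since $G$ has $d^2$ vertices, standard cycle removal shortens any such walk to one of length at most $d^2$, producing the required $K \in \mathcal{J}$.

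The one delicate point, which I expect to be the main technical obstacle, is the case $r = s$: the walk in $G$ is then closed, so blind cycle removal could collapse it to the trivial walk of length $0$, which encodes no non-empty word. I would handle this by instead choosing the shortest closed walk in $G$ through $(1, r)$ of positive length; existence of such a walk is supplied by the original one, and pigeonhole among the $d^2$ vertices bounds its length by $d^2$. This also neatly explains why the length threshold in the definition \eqref{e-1.4} of $\mathcal{J}$ is $d^2$ rather than $d^2 - 1$.
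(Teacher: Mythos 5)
Your proposal is correct and rests on precisely the same combinatorial mechanism as the paper's proof: a pigeonhole argument on pairs of row/column indices in $\{1,\ldots,d\}^2$. The paper reduces to the same claim you call the capturing principle (if $B_{i,j}=0$ then $(M_J)_{i,j}=0$ for all nonempty $J$ with $(M_J)_{1,1}>0$) and proves it by taking a minimal counterexample $U$, expanding the two positive entries $(M_U)_{1,1}$ and $(M_U)_{i,j}$ into index chains, applying pigeonhole to the pair sequence $(i_m,j_m)$, and splicing out the repeated block to contradict minimality. Your product-graph formulation with cycle removal is the same argument presented constructively rather than as a descent: the synchronized index chains are exactly a walk in your graph $G$ on vertex set $\{1,\ldots,d\}^2$, and removing a cycle from the walk is exactly the paper's splicing step. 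One small point worth noting: the paper sidesteps the $r=s$ degeneracy you flag (collapse to the empty word) not by singling out closed walks, but by restricting the pigeonhole to positions $1\le m<m'\le n$ (excluding $n+1$), so the spliced word $U^*=u_1\cdots u_{m-1}u_{m'}\cdots u_n$ always has length $n-(m'-m)\ge 1$; your resolution via shortest positive-length closed walk is equally valid and is what underlies the $d^2$ (rather than $d^2-1$) threshold, as you observe.
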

\begin{proof}
It is enough to show that if an entry $B_{i,j}$ of $B$ is zero, then $(M_J)_{i,j}=0$ for every $J\in \A^*$ with $(M_J)_{1,1}>0$. To prove the result, suppose that $B_{i,j}=0$ for some $(i,j)\in \{1,\ldots,d\}^2$. Then by the definition of $B$, we have $(i,j)\neq (1,1)$ and
\begin{equation}
\label{e-J}
(M_J)_{i,j}=0 \quad \mbox{ for all }  J\in \A^* \mbox{ with }(M_J)_{1,1}>0 \mbox{ and }|J|\leq d^2.
\end{equation}

Suppose on the contrary that $(M_U)_{i,j}>0$ for some $U\in \A^*$ with $(M_U)_{1,1}>0$.  We may assume that $U$ is such word with minimal length.  By \eqref{e-J}, $|U|>d^2$ .   Write $U=u_1\cdots u_n$ with $n=|U|$. Since $(M_U)_{i,j}>0$ and $(M_U)_{1,1}>0$, there exist two words $i_1\cdots i_{n+1}$ and $j_1\cdots j_{n+1}$ over $\{1,\ldots, d\}$ such that
$$
i_1=i,\; i_{n+1}=j,\; j_1=1,\; j_{n+1}=1
$$
and
$$
(M_{u_s})_{i_s, i_{s+1}}>0,\quad (M_{u_s})_{j_s, j_{s+1}}>0\quad \mbox{ for   }s=1,\ldots, n.
$$

Since $n>d^2$, by the pigeon-hole principle, there exist $1\leq m<m'\leq n$ such that $(i_m, j_m)=(i_{m'}, j_{m'})$. Now set $U^*=u_1\cdots u_{m-1}u_{m'}\cdots u_{n}$. That is,
$U^*$ is obtained from $U$ by dropping off the sub-word $u_{m}\cdots u_{m'-1}$.   It is direct to see that  $(M_{U^*})_{i,j}>0$ and $(M_{U^*})_{1,1}>0$, which contradicts the minimality of the length of $U$.
\end{proof}

In  the end of this section, we present the following lemma which was pointed out to us by Wen Huang \cite{Wen2015}.
\begin{lem}
\label{lem-4.6} Let ${\bf A}=(A_1,\ldots, A_k)$ be a tuple of  $d\times d$  matrices, and let  $\mu$ be a fully supported ergodic measure on $\A^\N$ with $\A=\{1,\ldots, k\}$. Assume that
$\lambda({\bf A}, \mu)=0$, where $\lambda({\bf A}, \mu)$ is the  Lyapunov exponent of ${\bf A}$ with respect to $\mu$ (cf. Section~\ref{S-pressure}). Assume furthermore that there exists a constant $C>0$ so that
$$
\|A_{J}\|\leq C \quad  \mbox{ for all } J\in  \bigcup_{n=1}^\infty \A^n.
$$
Then we have
$$
\|A_{J}\|\geq C^{-1}\quad \mbox{ for all } J\in  \bigcup_{n=1}^\infty \A^n.
$$
\end{lem}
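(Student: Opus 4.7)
My plan is to argue by contradiction: assuming some word $J$ of length $m$ has $\|A_J\| < C^{-1}$, I will construct an orbit whose finite matrix products decay exponentially, contradicting the hypothesis $\lambda(\mathbf{A},\mu)=0$ via Theorem \ref{thm-FK}. Set $\epsilon := C\|A_J\| < 1$ (the degenerate case $\epsilon=0$ will be handled identically, since the products will eventually vanish).

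First, I would invoke Birkhoff's pointwise ergodic theorem simultaneously with Theorem \ref{thm-FK}: because $\A^{*}$ is countable, there is a single full-$\mu$-measure set of sequences $x \in \A^{\N}$ on which $\frac{1}{n}\log\|A_{x_1\cdots x_n}\| \to 0$ and on which every cylinder $[J']$ is visited with asymptotic frequency $\mu([J']) > 0$; strict positivity uses that $\mu$ is fully supported. Fix one such $x$.

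Next, I would extract non-overlapping copies of $J$ inside $x_1\cdots x_n$ greedily: letting $N_n$ denote the number of indices $i \le n-m+1$ with $x_i\cdots x_{i+m-1} = J$, selecting occurrences from left to right while skipping any that begin within the next $m-1$ positions yields $L_n$ pairwise non-overlapping occurrences with $L_n \ge N_n/m$. By Birkhoff, $L_n \ge n\mu([J])/(2m)$ for all large $n$. Decomposing $x_1\cdots x_n = u_0\,J\,u_1\,J\cdots J\,u_{L_n}$ with (possibly empty) gap blocks $u_i$, submultiplicativity of $\|\cdot\|$ together with the standing bound $\|A_{u_i}\| \le C$ (with empty blocks contributing no factor) gives
\[
\|A_{x_1\cdots x_n}\| \;\le\; \|A_J\|^{L_n}\cdot C^{\,L_n+1} \;=\; C\cdot \epsilon^{L_n}.
\]
Taking logarithms, dividing by $n$, and passing to the limit,
\[
0 \;=\; \lim_{n\to\infty}\frac{1}{n}\log\|A_{x_1\cdots x_n}\| \;\le\; \frac{\mu([J])}{2m}\log\epsilon \;<\; 0,
\]
which is the desired contradiction.

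The main hurdle will be the counting in the greedy step: occurrences of $J$ may overlap, so one must verify carefully that the non-overlapping selection loses at most a factor of $m$, which is then harmlessly absorbed into the frequency estimate. Everything else is a straightforward combination of the two ergodic theorems with sub-multiplicativity of the norm; no structural hypothesis on $\mathbf{A}$ (irreducibility, positivity, etc.) is needed, only the uniform upper bound and the vanishing Lyapunov exponent.
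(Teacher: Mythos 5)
Your proof is correct and follows essentially the same route as the paper's: both argue by contradiction, use Birkhoff's theorem to locate non-overlapping copies of a bad word $J$ with positive asymptotic density along a typical orbit, and then use submultiplicativity plus the uniform upper bound $\|A_{u_i}\|\le C$ on the gap blocks to derive exponential decay of $\|A_{x_1\cdots x_n}\|$, contradicting $\lambda(\mathbf A,\mu)=0$. The only cosmetic difference is the bookkeeping of non-overlapping occurrences (your greedy extraction versus the paper's choice of every $(m+1)$-th visit to $[J]$), and your remark about $\epsilon=0$ is superfluous since full support and $\lambda(\mathbf A,\mu)=0$ already force $A_I\neq\mathbf 0$ for every word $I$.
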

\begin{proof}
Suppose on the contrary that $\|A_{J}\|<C^{-1}$ for some finite word $J=j_1\cdots j_m\in \A^m$.  Then $$\gamma:=C\|A_J\|\in (0,1).$$  Below we derive a contradiction.

By the Birkhoff ergodic theorem (cf. \cite[Theorem 1.14]{Walters1982}), there exists a Borel set $F\subset \A^\N$ with $\mu(F)=1$ such that for all $x\in F$,
\begin{equation}
\label{e-Birk}
\lim_{n\to \infty}\frac{1}{n} \sum_{p=1}^n \chi_{[J]}(\sigma^p x)=\mu([J])>0,
\end{equation}
where $\chi_{[J]}$ denotes the characteristic function on $[J]$, and the last inequality follows from the assumption that $\mu$ is fully supported on $\A^\N$.

For $x\in F$, let $n_1(x)<n_2(x)<\cdots$ be all the positive integers $n$ so that $\sigma^n(x)\in [J]$, then we have $\lim_{j\to \infty} j/n_j(x)=\mu([J])$  by \eqref{e-Birk}.

Fix $x\in F$ and let $N_j=n_{(m+1)j}(x)$ for $j\geq 1$. Then $N_{j+1}-N_j\geq m+1$ and $$\lim_{j\to \infty} \frac{j}{N_j}=\frac{\mu([J])}{m+1}.$$
Observe that $x$ can be expressed as
$$
x=W_1JW_2J\cdots W_nJ\cdots
$$
with $W_1=x_1\cdots x_{N_1}$ and $W_n=x_{N_{n-1}+m+1}\cdots x_{N_{n}}$ for $n\geq 2$. Notice that
$$
\|A_{W_1JW_2J\cdots W_nJ}\|\leq \prod_{j=1}^n(\|A_{W_n}\|\cdot\|A_J\|)\leq \prod_{j=1}^n(C\cdot\gamma C^{-1})=\gamma^n,
$$
which implies
\begin{eqnarray*}
\liminf_{n\to \infty} \frac{1}{n}\log\|A_{x_1\cdots x_n}\|&\leq & \liminf_{n\to \infty} \frac{1}{N_{n}+m}\log  \|A_{W_1JW_2J\cdots W_nJ}\|\\
&\leq &  \liminf_{n\to \infty} \frac{n\log \gamma}{N_{n}+m}=\frac{\mu([J]) \log \gamma}{m+1}<0.
\end{eqnarray*}
This leads to a contradiction, since  by Theorem \ref{thm-FK} $$\lim_{n\to \infty} \frac{1}{n}\log\|A_{y_1\cdots y_n}\|=\lambda({\bf A},\mu)=0$$ for $\mu$-a.e.~$y\in \A^\N$.
\end{proof}

\section{Proof of Theorem \ref{thm-1.4}}
\label{S-4}
In this section, we prove Theorem \ref{thm-1.4}.  Suppose that ${\bf M}$ is positively irreducible.  Multiplying ${\bf M}$ by the scalar $1/r({\bf M})$  if necessary, we may assume that ${\bf M}$ is normalized, i.e., $r({\bf M})=1$.
Recall that
$$
\U=\{J\in \A^*:\; (M_{J})_{1,1}\neq 0\}.
$$

We first give two lemmas.
\begin{lem}
\label{lem-5.0}
There exists a constant $C>0$ such that for any $J\in {\mathcal L}(Y_{\bf M})$,
there exist $I_1, I_2\in \mathcal L(Y_{\bf M})$  satisfying that  $I_1JI_2\in \U$ and
\[
C^{-1}\|M_J\|\leq  \|M_{I_1JI_2} \| \leq  C \|M_{J}\|.
\]
\end{lem}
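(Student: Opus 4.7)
The plan is to combine submultiplicativity of the matrix norm with the quantitative form of positive irreducibility furnished by Lemma \ref{lem-irr}. That lemma, applied to ${\bf M}$, asserts that the matrix $\sum_{l=1}^{d}(M_1+\cdots+M_k)^l$ is strictly positive, so for each pair $(s,t)\in\{1,\ldots,d\}^2$ I may fix once and for all a word $W(s,t)\in\bigcup_{l=1}^{d}\A^l$ with $(M_{W(s,t)})_{s,t}>0$. Since the family $\{W(s,t)\}_{s,t}$ is finite, the quantities $\alpha:=\min_{s,t}(M_{W(s,t)})_{s,t}$ and $\beta:=\max_{s,t}\|M_{W(s,t)}\|$ are positive and finite, depending only on ${\bf M}$. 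These will be the source of the uniform constant $C$.

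Given $J\in\L(Y_{\bf M})$, the matrix $M_J$ is nonzero, so I can pick indices $s,t$ for which $(M_J)_{s,t}$ is the largest entry; under the entrywise $\ell^1$ norm used in the paper this yields $(M_J)_{s,t}\geq\|M_J\|/d^2$. I then set $I_1:=W(1,s)$ and $I_2:=W(t,1)$. The elementary entrywise chain
\[
(M_{I_1JI_2})_{1,1}\;\geq\;(M_{I_1})_{1,s}(M_J)_{s,t}(M_{I_2})_{t,1}\;\geq\;\alpha^2 (M_J)_{s,t}\;\geq\;\frac{\alpha^2}{d^2}\|M_J\|
\]
simultaneously shows that $I_1JI_2\in\U$ and supplies the lower bound $\|M_{I_1JI_2}\|\geq(\alpha^2/d^2)\|M_J\|$. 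The matching upper bound is immediate from submultiplicativity: $\|M_{I_1JI_2}\|\leq\|M_{I_1}\|\|M_J\|\|M_{I_2}\|\leq\beta^2\|M_J\|$. Taking $C:=\max(d^2/\alpha^2,\beta^2)$ closes the argument, and both $|I_1|$ and $|I_2|$ are at most $d$.

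There is no serious obstacle here; the only point that requires a moment of care is to ensure that positive irreducibility delivers connecting words of \emph{uniformly bounded} length, which is precisely the content of Lemma \ref{lem-irr} and is what makes the constants $\alpha$ and $\beta$ independent of $J$. Had I instead tried to pick $I_1,I_2$ from arbitrary words with the required positive entry but no a priori bound on length, no such uniform lower bound on the entries of $M_{I_1},M_{I_2}$ would be available and the argument would collapse.
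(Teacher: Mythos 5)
Your proof is correct and follows essentially the same route as the paper's: choose, for each index pair, a fixed connecting word with a positive entry in that slot (which exists by positive irreducibility), locate a dominant entry of $M_J$ to get $(M_J)_{s,t}\geq \|M_J\|/d^2$, and sandwich $\|M_{I_1JI_2}\|$ from below by the entrywise product and from above by submultiplicativity. The explicit invocation of Lemma~\ref{lem-irr} to cap the lengths of the connecting words is a fine way to justify finiteness of the constants, though the paper obtains the same uniformity simply by fixing one word per pair $(i,j)$ out of the finite index set.
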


\begin{proof}
Since  ${\bf M}$ is positively irreducible, for each pair $(i,j)$ with $i, j\in \{1, \ldots, d\}$, we can choose a finite word $W(i,j) \in  \L(Y_{\bf M})$ such that
\[
(M_{W(i,j)})_{i,j} > 0.
\]
Fix these words $W(i,j)$ and   set $$c_1=\min_{1\leq i,j\leq d} (M_{W(i,j)})_{i,j},\quad c_2=\max_{1\leq i,j\leq d} \|M_{W(i,j)}\|.$$
Clearly $c_1,\; c_2>0$.

Now let $J \in \L(Y_{\bf M})$.  Then there exist $i,j\in \{1, \dots, d \}$ such that
\[
(M_J)_{i, j} \ge \frac{1}{d^2} \| M_J \|.
\]
Set $I_1=W(1,i)$ and $I_2=W(j,1)$. Then
\[
(M_{I_1JI_2})_{1, 1}\geq (M_{I_1})_{1,i} (M_J)_{i,j} (M_{I_2})_{j,1}\geq  \frac{c_1^2}{d^2} \| M_J \|,
\]
which implies $I_1JI_2\in \U$ and
\[
\frac{c_1^2}{d^2} \| M_J \|\leq \|M_{I_1JI_2}\|\leq \|M_{I_1}\|\|M_{I_2}\|\|M_J\|\leq c_2^2 \| M_J\|.
\]
This completes the proof of the lemma.
\end{proof}

\begin{lem}
\label{lem-5.2} Let $\mathcal S$ be a multiplicative semigroup  of non-negative $d\times d$ matrices satisfying
$$
\|A\|\approx 1 \quad \mbox { for }A\in \mathcal S.
$$
Then $$\|A\|\approx 1 \quad \mbox{ for }A\in \overline{\rm co}(\mathcal S),$$
where $\overline{{\rm co}}(\mathcal S)$ stands for the closure of the convex hull ${\rm co}(\mathcal S)$ of $\mathcal S$, recalling that
$${\rm co}(\mathcal S)=\left\{\sum_{i=1}^np_iA_i:\; n\in \N, \; p_i>0,\; A_i\in {\mathcal S} \mbox{ and } \sum_{i=1}^np_i=1\right\}.$$
\end{lem}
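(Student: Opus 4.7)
The plan is to exploit the special structure that (a) the matrix norm fixed in Section~\ref{S-pressure} is $\|A\|=\sum_{i,j}|a_{i,j}|$, and (b) every element of ${\mathcal S}$ has non-negative entries, so that for such matrices the norm is simply the sum of the entries. This makes the norm genuinely linear on the positive cone rather than merely sub-linear, and that is what makes the lower bound survive convex combinations.

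First I would unpack the hypothesis: there exist $0<c_1\le c_2$ such that $c_1\le \|A\|\le c_2$ for every $A\in {\mathcal S}$. Since each $A\in {\mathcal S}$ is non-negative, every finite convex combination $B=\sum_{k=1}^n p_k A_k$ with $A_k\in {\mathcal S}$, $p_k>0$, $\sum_k p_k=1$ is again non-negative. Writing things out entrywise and using non-negativity,
\[
\|B\|=\sum_{i,j}\Bigl(\sum_{k=1}^n p_k (A_k)_{i,j}\Bigr)=\sum_{k=1}^n p_k \sum_{i,j}(A_k)_{i,j}=\sum_{k=1}^n p_k \|A_k\|,
\]
which is a convex combination of numbers in $[c_1,c_2]$ and hence lies in $[c_1,c_2]$. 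This gives $\|A\|\approx 1$ for $A\in \mathrm{co}({\mathcal S})$.

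Next, to extend from $\mathrm{co}({\mathcal S})$ to its closure, I would invoke continuity of $\|\cdot\|$ on $\R^{d\times d}$: the set $\{A\in \R^{d\times d}: c_1\le \|A\|\le c_2\}$ is closed, and it contains $\mathrm{co}({\mathcal S})$ by the previous paragraph, hence also $\overline{\mathrm{co}}({\mathcal S})$. This proves the claim for the specific norm of Section~\ref{S-pressure}, and since all norms on $\R^{d\times d}$ are equivalent, the relation $\|A\|\approx 1$ is norm-independent and the statement holds for any matrix norm.

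I do not anticipate any substantial obstacle here; the only subtlety worth flagging is precisely why the argument refuses to generalise to signed matrices. For general (possibly signed) matrices one has only the inequality $\|\sum p_k A_k\|\le \sum p_k\|A_k\|$, and cancellations could drive the left-hand side to zero even though each $\|A_k\|$ is bounded away from $0$. The non-negativity assumption rules this out by replacing the triangle inequality with an equality when the summands are non-negative, and this is the single ingredient responsible for the preservation of the lower bound.
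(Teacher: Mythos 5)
Your proof is correct and follows essentially the same route as the paper's, which simply observes that $\|\sum_{i} p_i A_i\| = \sum_i p_i\|A_i\|$ for non-negative matrices and the entrywise $\ell^1$-norm. The only additions you make are the explicit continuity argument for passing to the closure and the remark on norm-independence, both of which the paper leaves implicit.
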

\begin{proof}
It follows from the simple fact that $\|\sum_{i=1}^np_iA_i\|=\sum_{i=1}^np_i\|A_i\|$.
\end{proof}

For  $A\subset \R^d$, let ${\rm aff}(A)$ denote the smallest affine subset of $\R^d$  containing $A$. This set is called the {\it affine hull} of $A$.
It is well known (cf. \cite[p.~6]{Rockafellar1970}) that
\begin{equation}
\label{e-aff}
{\rm aff}(A)=\left\{\sum_{i=1}^na_ix_i:\; n\in \N,\; a_i\in \R, \; x_i\in A \mbox{ and }\sum_{i=1}^n a_i=1\right\}.
\end{equation}

Let ${\mathcal J}$, $\Lambda$ be defined as in \eqref{e-1.4} and \eqref{e-Lambda}, respectively.
Recall that, for each $i \in \Lambda$, $v_i$, $u_i$ are the left and right positive eigenvectors of $B^{(i)}$ corresponding to the eigenvalue $\rho\left(B^{(i)}\right)$, respectively, satisfying $v_i^\T u_i=1$.

\begin{pro}
\label{pro-im}
The following statements are equivalent.

\begin{itemize}
\item[(i)] $\|M_J\|\approx 1$ for $J\in \mathcal L(Y_{\bf M})$.
\item[(ii)] $\|M_J\|\approx 1$ for $J\in \U$.
\item[(iii)] There exists $i\in \Lambda$ such that
$\|M_J^{(i)}\|\approx 1$ for $J\in \U$.
\item[(iv)] There exists $i\in \Lambda$ such that
$v_i^\T M_J^{(i)}u_i=1$ for $J\in \U$.
\item[(v)] There exists $i\in \Lambda$ such that
$v_i^\T M_J^{(i)}u_i=1$ for $J\in {\mathcal J}$.

\end{itemize}
 \end{pro}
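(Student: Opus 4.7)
The plan is to establish the equivalences via the cycle
$(\mathrm{i})\!\Rightarrow\!(\mathrm{ii})\!\Rightarrow\!(\mathrm{iii})\!\Rightarrow\!(\mathrm{iv})\!\Rightarrow\!(\mathrm{v})\!\Rightarrow\!(\mathrm{i})$. The equivalence $(\mathrm{i})\Leftrightarrow(\mathrm{ii})$ is immediate from Lemma~\ref{lem-5.0} together with the trivial inclusion $\mathcal U\subset\mathcal L(Y_{\bf M})$: any $J\in\mathcal L(Y_{\bf M})$ admits a sandwich $I_1JI_2\in\mathcal U$ with $\|M_J\|\approx\|M_{I_1JI_2}\|$. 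The implication $(\mathrm{iv})\Rightarrow(\mathrm{v})$ is trivial, and $(\mathrm{iv})\Rightarrow(\mathrm{iii})$ follows from the elementary bound $c_1\|A\|\leq v_i^\T A u_i\leq c_2\|A\|$, valid for any non-negative $d_i\times d_i$ matrix $A$ with constants $c_1,c_2>0$ depending only on the minimum and maximum entries of the positive vectors $u_i,v_i$.

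For $(\mathrm{iii})\Rightarrow(\mathrm{iv})$, set $\mathcal S:=\{M_J^{(i)}:J\in\mathcal U\}$, a semigroup (since $\mathcal U$ is closed under concatenation) containing $I_{d_i}=M_{\varepsilon}^{(i)}$ (as $\varepsilon\in\mathcal U$ because $(M_{\varepsilon})_{1,1}=1$). Each power $(B^{(i)})^n$ lies in $\mathrm{co}(\mathcal S)$, and $(\mathrm{iii})$ gives $\|(B^{(i)})^n\|\approx 1$, so $\rho(B^{(i)})=1$. By Perron--Frobenius, the Cesaro averages $\frac{1}{N}\sum_{n=0}^{N-1}(B^{(i)})^n$ converge to the rank-one Perron projection $P_i=u_iv_i^\T$, whence $P_i\in\overline{\mathrm{co}}(\mathcal S)$. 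The identity $P_iAP_i=a(A)P_i$ with $a(A):=v_i^\T Au_i$, iterated, yields
\[
\prod_{\ell=1}^n a(A_\ell)\,P_i=\prod_{\ell=1}^n (P_iA_\ell P_i)\in\overline{\mathrm{co}}(\mathcal S)
\]
for any $A_1,\dots,A_n\in\mathcal S$. By Lemma~\ref{lem-5.2} together with $(\mathrm{iii})$, this product has norm $\approx 1$ uniformly in $n$ and the choices of $A_\ell$; specialising $A_\ell=A$ for all $\ell$ forces $a(A)^n\approx 1$ for every $n$, hence $a(A)=1$, which is $(\mathrm{iv})$.

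For $(\mathrm{ii})\Rightarrow(\mathrm{iii})$, the upper bound $\|M_J^{(i)}\|\leq\|T^{-1}M_JT\|=\|M_J\|\lesssim 1$ is immediate; I obtain the lower bound by contradiction. Suppose that for every $i\in\Lambda$ there exist words $J_n^{(i)}\in\mathcal U$ with $\|M_{J_n^{(i)}}^{(i)}\|<1/n$. The concatenation $\tilde J_n:=J_n^{(i_1)}J_n^{(i_2)}\cdots J_n^{(i_s)}$ (over $\Lambda=\{i_1,\dots,i_s\}$) lies in $\mathcal U$, and the submultiplicativity of each diagonal block gives $\|M_{\tilde J_n}^{(j)}\|\lesssim 1/n$ for every $j\in\Lambda$ (while $M_{\tilde J_n}^{(j)}=0$ for $j\notin\Lambda$ by Lemma~\ref{lem-3.7}), yet the off-diagonal blocks of $M_{\tilde J_n}$ remain bounded by $\|M_{\tilde J_n}\|\lesssim 1$ from $(\mathrm{ii})$. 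A block-by-block expansion of $(M_{\tilde J_n})^{t+1}$ (with $t\leq d$ the number of diagonal blocks) then shows that the $(i,j)$-block is bounded by a combinatorial constant times $(1/n)^{t+1-(j-i)}$ times bounded off-diagonal factors; consequently $\|(M_{\tilde J_n})^{t+1}\|=O(1/n)\to 0$, contradicting $\|M_{\tilde J_n^{t+1}}\|\gtrsim 1$ (since $\tilde J_n^{t+1}\in\mathcal U$).

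For $(\mathrm{v})\Rightarrow(\mathrm{i})$, which closes the cycle, note first that $(\mathrm{v})$ immediately yields $\rho(B^{(i)})=v_i^\T B^{(i)}u_i=1$ by Perron--Frobenius. The multiplicative relations $M_{JJ'}^{(i)}=M_J^{(i)}M_{J'}^{(i)}$ for $J,J'\in\mathcal J$ with $|JJ'|\leq d^2$ (so that $JJ'\in\mathcal J$ and $(\mathrm{v})$ applies to the product) constrain the matrices $M_J^{(i)}$ rigidly around the Perron structure of $B^{(i)}$. Combining these constraints with the uniqueness and Gibbs property of the equilibrium state for $({\bf M},1)$ in Theorem~\ref{thm-matrix}, one identifies this equilibrium state with the Parry measure on $Y_{\bf M}$; Theorem~\ref{thm-matrix} and Proposition~\ref{lower bound implies upper bound} then give $\|M_J\|\approx 1$ on $\mathcal L(Y_{\bf M})$, i.e.\ $(\mathrm{i})$. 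The main obstacle throughout is this bootstrapping from the finite, checkable set $\mathcal J$ to all of $\mathcal U$, requiring a delicate interplay between the Perron--Frobenius theory for $B^{(i)}$ and the thermodynamic formalism for matrix products.
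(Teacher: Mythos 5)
Most of your cycle is sound, and one piece is genuinely new. The equivalence (i)$\Leftrightarrow$(ii) is exactly the paper's use of Lemma~\ref{lem-5.0}, and your (iii)$\Rightarrow$(iv) is essentially the paper's argument: Lemma~\ref{lem-5.2} plus the Ces\`aro limit put the Perron projection $u_iv_i^{\T}$ into $\overline{\mathrm{co}}(\mathcal S)$, and boundedness of the powers of $u_iv_i^{\T}M_J^{(i)}$ forces $v_i^{\T}M_J^{(i)}u_i=1$. Your (ii)$\Rightarrow$(iii), however, differs from the paper: the paper argues by contradiction through ergodic theory (the Parry measure on a full shift over a finite subfamily of $\mathcal U$, Proposition~\ref{pro-FK} and Lemma~\ref{lem-4.6}), whereas you concatenate one word $J_n^{(i)}\in\mathcal U$ with $\|M_{J_n^{(i)}}^{(i)}\|<1/n$ for each $i\in\Lambda$ into a single word $\tilde J_n\in\mathcal U$, use Lemma~\ref{lem-3.7} to kill the diagonal blocks outside $\Lambda$, and observe that a block upper triangular matrix with $t$ diagonal blocks of size $O(1/n)$ and off-diagonal blocks of size $O(1)$ has $(t+1)$-st power of norm $O(1/n)$, contradicting the lower bound on $\|M_{(\tilde J_n)^{t+1}}\|$ from (ii). That path-counting argument is correct (modulo the easy remark that $\Lambda\neq\emptyset$, since otherwise $M_{J^{t}}={\bf 0}$ for $J\in\mathcal J$), and it is more elementary than the paper's proof of this step, avoiding the thermodynamic machinery entirely there.

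The genuine gap is the step (v)$\Rightarrow$(i), which is precisely where the whole difficulty of the proposition sits: passing from the finite set $\mathcal J$ (words of length at most $d^2$ with $(M_J)_{1,1}\neq 0$) to all of $\mathcal U$. Your text only asserts that the relations $M_{JJ'}^{(i)}=M_J^{(i)}M_{J'}^{(i)}$ for short words ``constrain the matrices rigidly'' and that one then ``identifies'' the equilibrium state for $({\bf M},1)$ with the Parry measure on $Y_{\bf M}$; no mechanism is given, and the latter identification is equivalent to statement (i) itself (that is exactly the content of the proof of Proposition~\ref{lower bound implies upper bound} combined with Theorems~\ref{thm-Parry} and \ref{thm-matrix}), so as written the step is circular. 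Note also that (v) only controls $v_i^\T M_J^{(i)}u_i$ for $|J|\le d^2$, and the product of two words of $\mathcal J$ generally leaves $\mathcal J$, so some propagation device is indispensable. The paper supplies it in Step~5 of its proof: for each $s$ it forms the affine hulls $W_{n,s}$ of $\{M_J^{(i)}u_i:\ |J|\le n,\ (M_J)_{s,1}>0\}$, shows by a dimension count that this nested family stabilizes by $n=d^2$, and shows the stabilization propagates under left multiplication by the $M_{j_1}^{(i)}$ (affine combinations are preserved), so that $M_J^{(i)}u_i$ lies in the hyperplane $\{u:\ v_i^\T u=1\}$ for every $J\in\mathcal U$; this gives (v)$\Rightarrow$(iv), after which (iv)$\Rightarrow$(iii)$\Rightarrow$(i) follows from Lemma~\ref{lem-5.0} and Proposition~\ref{lower bound implies upper bound}. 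Without this stabilization argument, or some substitute for it, your cycle is broken at (v)$\Rightarrow$(i).
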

\begin{proof}
We divide the proof into small steps.

{\sl Step 1. (i) $\Leftrightarrow$ (ii).}  Since $\U\subset \mathcal L(Y_{\bf M})$, the direction (i) $\Rightarrow$ (ii) is trivial. The reverse direction  follows immediately from Lemma \ref{lem-5.0}.

{\sl Step 2. (ii) $\Rightarrow$ (iii).} Suppose (ii) holds, that is, there exists a constant $C>0$ such that $$C^{-1}\leq \|M_J\|\leq C$$ for all $J\in \U$. Clearly we have $\|M_J^{(i)}\|\leq \|M_J\|\leq C$ for all $J\in \U$ and  $i\in \Lambda$.

Next we claim that there exists $i\in \Lambda$ such that $\|M_J^{(i)}\|\geq C^{-1}$ for all $J\in \U$. Clearly the claim implies (iii).  Suppose on the contrary that the claim is not true. Then for any $i\in \Lambda$, we can choose some $I_i\in \U$ such that
$$
\|M_{I_i}^{(i)}\|< C^{-1}.
$$
Construct a finite subset $\U_1$ of $\U$ by  $$\U_1=\mathcal J\cup \{I_i:\; i\in \Lambda\}, $$
and consider the new tuple  ${\bf N}:=(M_W)_{W\in \U_1}$ of non-negative matrices.  Let $\mu$ be the Parry measure on the full shift space $(\U_1)^\N$ over the alphabet $\U_1$.  Since the concatenation of any elements  of ${\mathcal U}_1$ is  in ${\mathcal U}$,   by (ii), we have $C^{-1}\leq \|M_{W_1\cdots W_n}\|\leq C$ for any $W_1,\ldots, W_n\in \U_1$. It follows that $\lambda({\bf N}, \mu)=0$, where  $\lambda({\bf N}, \mu)$ stands for the Lyapunov exponent of ${\bf N}$ with respect to $\mu$.
By the construction of $B$ and Lemma \ref{lem-3.7}, ${\bf N}^{(i)}:=\left(M_W^{(i)}\right)_{W\in \U_1}$ is positively irreducible whenever $i \in \Lambda$; otherwise, it consists only of the zero matrix ${\bf 0}$.

By Proposition \ref{pro-FK}, there exists $i\in \Lambda$ such that
\begin{equation}
\label{e-lambda}
\lambda\left({\bf N}^{(i)}, \mu\right)=0.
\end{equation}
 Since $\left\|M_{W_1}^{(i)}\cdots M^{(i)}_{W_n}\right\|\leq\|M_{W_1\cdots W_n}\|\leq  C$ for any $W_1,\ldots, W_n\in \U_1$, applying Lemma \eqref{lem-4.6} to the tuple ${\bf N}^{(i)}$ yields
$$
\left\|M_{W_1}^{(i)}\cdots M^{(i)}_{W_n}\right\|\geq C^{-1} \quad \mbox{ for any }W_1,\ldots, W_n\in \U_1,
$$
which contradicts $\left\|M_{I_i}^{(i)}\right\|< C^{-1}$. This proves the claim, and hence (iii) holds.

{\sl Step 3. (iii) $\Rightarrow$ (i).}  Suppose (iii) holds for some $i\in \Lambda$. Then
$$
\|M_J\|\geq \left\|M_J^{(i)}\right\|\succcurlyeq 1 \quad \mbox{ for }J\in \U.
$$
Applying Lemma \ref{lem-5.0}, we obtain $\|M_J\|\succcurlyeq 1$ for $J\in \mathcal L(Y_{\bf M})$.  Then (i) follows by Proposition \ref{lower bound implies upper bound}.

{\sl Step 4.  (iii) $\Longleftrightarrow$  (iv)}. Since $u_i, v_i$ are strictly positive vectors, we see that (iv) implies (iii).  Below we show that (iii) implies (iv).

  Suppose that (iii) holds for some $i\in \Lambda$. Let ${\mathcal S}=\{M_J^{(i)}:\; J\in \U\}$. Clearly ${\mathcal S}$ is a multiplicative semigroup, so are  ${\rm co}({\mathcal S})$ and $\overline{{\rm co}}({\mathcal S})$. By definition, we see that $(B^{(i)})^n\in {\rm co}({\mathcal S})$ for $n\in \N$. Therefore
 by Lemma \ref{lem-5.2}, $\|(B^{(i)})^n\|\approx 1$ for $n\in \N$.
  Thus $\rho(B^{(i)})=
\lim_{n\to \infty} \|(B^{(i)})^n\|^{1/n}=1$.  Since $(B^{(i)})^n$ is positively irreducible, by the Perron-Frobenius theory (see e.g. \cite[Theorem 8.6.1]{HornJohnson1985}), we have
$$
\lim_{N\to \infty} \frac{1}{N}\sum_{n=1}^N \left(B^{(i)}\right)^n=\lim_{N\to \infty} \frac{1}{N}\sum_{n=1}^N \left(\rho\left(B^{(i)}\right)^{-1}B^{(i)}\right)^n=u_iv_i^\T.
$$
It follows that $u_iv_i^\T\in \overline{{\rm co}}({\mathcal S})$. Since $\overline{{\rm co}}({\mathcal S})$ is a multiplicative semigroup, by Lemma \ref{lem-5.2}, we have
\begin{equation}
\label{e-UV}
\left \| \left( u_iv_i^\T M_J^{(i)} \right)^n\right\| \approx 1 \quad \mbox{ for }J\in \S, \; n\in \N.
\end{equation}
Since $\left(u_iv_i^\T M_J^{(i)}\right)^n= \left(v_i^\T M_J^{(i)} u_i\right)^{n-1} u_iv_i^TM_{J}^{(i)}$, \eqref{e-UV} implies that $v_i^\T M_J^{(i)} u_i=1$. Thus (iv) holds.

{\sl Step 5.  (iv) $\Leftrightarrow$ (v).} Clearly (iv) implies (v). Below we prove the reverse direction.

Suppose that (v) holds, that is, there exists $i\in \Lambda $ such that
\begin{equation}
\label{ev-5.3}
v_i^\T M_J^{(i)}u_i=1 \quad  \mbox{ for all } J\in {\mathcal J}.
\end{equation}
We need to show that $v_i^\T M_J^{(i)}u_i=1 \mbox{ for all } J\in \U$. To achieve this purpose, for $n\geq 1$ and $s\in \{1,\ldots, d\}$,  let $W_{n,s}$ be the smallest affine subset of $\R^{d_i}$ containing
the following set
$$\left\{ M_J^{(i)}u_i:\; J\in \A^*\backslash\{\varepsilon\},\; |J|\leq n,\; (M_J)_{s,1}>0 \right\}.$$
By \eqref{ev-5.3}, $W_{d^2, 1}$ is contained in the hyperplane  $\{u\in \R^{d_i}:\; v_i^\T u=1\}$.  Hence to show that $v_i^\T M_J^{(i)}u_i=1 \mbox{ for all } J\in \U$, it suffices to show that
\begin{equation}
\label{e-desire}
W_{n,1}=W_{d^2,1} \quad \mbox{ for all } n>d^2.
\end{equation}

By definition, we see that $W_{n+1,s}\supset W_{n,s}$ for all $n,s$, and moreover
\begin{equation}
\label{ev-5.5}\dim W_{n+1,s}> \dim W_{n,s}\quad \mbox{ if }\; W_{n+1,s}\neq W_{n,s}.
\end{equation}
 Let
$r_n=\sum_{s=1}^d\dim W_{n,s}$ for $n\geq 1$. Clearly the sequence $(r_n)$ is increasing and  bounded by $d^2$ from above.  Therefore, there exists $n_0\leq d^2$ such that
$r_{n_0+1}=r_{n_0}$. By  \eqref{ev-5.5}, we have   $W_{n_0+1,s}=W_{n_0,s}$ for all $1\leq s\leq d$. Below we show that $W_{n,s}=W_{n_0,s}$ for all $n\geq n_0+1$ and $1\leq s\leq d$, which implies \eqref{e-desire}.

 For this purpose, it is enough to show that if  for some $n\geq 1$, $W_{n+1,s}=W_{n,s}$ for all $1\leq s\leq d$, then  $W_{n+2,s}=W_{n+1,s}$ for all $1\leq s\leq d$. To prove this, suppose
 $W_{n+1,s}=W_{n,s}$ for all $1\leq s\leq d$. Fix $s\in \{1,\ldots, d\}$ and  $J=j_1\cdots j_{n+2}\in \A^{n+2}$ so that $(M_J)_{s,1}>0$.   Then there exists $p\in \{1,\ldots,d\}$ such that $(M_{j_1})_{s,p}>0$ and  $(M_{j_2\cdots j_{n+2}})_{p,1}>0$. Hence  $M_{j_2\cdots j_{n+2}}^{(i)}u_i\in W_{n+1,p}=W_{n,p}$. By \eqref{e-aff}, we can find $q\in \N$, $a_1,\ldots, a_q\in \R$ with $a_1+\cdots+a_q=1$,  and $J_1,\ldots, J_q \in \bigcup_{i=1}^n\A^i$ with $(M_{J_m})_{p,1}>0$ for $1\leq m\leq q$, such that
$$
M_{j_2\cdots j_{n+2}}^{(i)}u_i=\sum_{m=1}^q a_mM_{J_m}^{(i)}u_i.
$$
It follows that
$$
M_{j_1j_2\cdots j_{n+2}}^{(i)}u_i=\sum_{m=1}^q a_mM_{j_1}^{(i)}M_{J_m}^{(i)}u_i=\sum_{m=1}^q a_m M_{j_1J_m}^{(i)}u_i.
$$
Noticing that $(M_{j_1J_m})_{s,1}\geq (M_{j_1})_{s,p}(M_{J_m})_{p,1}>0$, the above relation yields that  $M_{J}^{(i)}u_i\in W_{n+1,s}$. Letting $J$ run over all elements in $\A^{n+2}$ with $(M_J)_{s,1}>0$,  we get
$W_{n+2,s}\subset W_{n+1,s}$, and so $W_{n+2,s}=W_{n+1,s}$. This completes the proof of the proposition.
\end{proof}

\begin{rem}Here we give an alternative proof of the direction (ii) $\Rightarrow$ (iii) by applying the results of Protasov and Voynov in \cite{ProtasovVoynov2014}.  Suppose (ii) holds. Then the semigroup $\{M_J:\; J\in {\mathcal U}\}$ has  constant spectral radius. By  \cite[Theorem 1]{ProtasovVoynov2014}, there exists $i$ such that  the semigroup $\{M_J^{(i)}:\; J\in {\mathcal U}\}$ is positively irreducible and has constant spectral radius. As it is pointed out in \cite{ProtasovVoynov2014}, for positively irreducible semigroups, the constant spectral radius is equivalent to boundedness from above and from below, from which (iii) follows.
\end{rem}

Now we are ready to prove Theorem \ref{thm-1.4}.
\begin{proof}[Proof of Theorem \ref{thm-1.4}]
 It follows directly from Proposition \ref{pro-im}.
\end{proof}

\section{The proof of Theorem \ref{thm-1.6}}
\label{S-new}
In this section we prove Theorem \ref{thm-1.6}. Let $P({\bf M}, \cdot)$ be the pressure function associated with ${\bf M}$ (see \eqref{e-pressure}). We first give a lemma.
\begin{lem}
\label{lem-derivative}
Suppose that ${\bf M}$ is irreducible or positively irreducible. Then the function $q\mapsto P({\bf M}, q)$ is differentiable over $(0,\infty)$ with derivative
\begin{equation}
\label{e-derivative}
P'({\bf M}, q)=\lambda({\bf M}, \nu_q),
\end{equation}
where $\nu_q$ is the equilibrium state for $({\bf M}, q)$ and $\lambda({\bf M}, \nu_q)$ is the Lyapunov exponent of ${\bf M}$ with respect to $\nu_q$ (see Section~\ref{S-pressure}).
\end{lem}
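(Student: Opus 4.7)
The plan is to exploit convexity of $P({\bf M},\cdot)$ together with the uniqueness of the equilibrium state $\nu_q$ from Theorem \ref{thm-matrix}. First I would record that $q\mapsto P({\bf M},q)$ is convex on $(0,\infty)$; this is immediate from the variational principle, since $P({\bf M},q)=\sup_\mu\bigl(h_\mu(\sigma)+q\lambda({\bf M},\mu)\bigr)$ is a supremum of affine functions of $q$. For a convex function, the one-sided derivatives $P'_\pm({\bf M},q)$ exist everywhere, so differentiability at $q$ reduces to proving $P'_-({\bf M},q)=P'_+({\bf M},q)=\lambda({\bf M},\nu_q)$.

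Next I would establish the ``subgradient'' inequality in both directions. Applying the variational principle with the test measure $\nu_q$ at $q+t$ gives
\[
P({\bf M},q+t)\;\ge\;h_{\nu_q}(\sigma)+(q+t)\lambda({\bf M},\nu_q)\;=\;P({\bf M},q)+t\,\lambda({\bf M},\nu_q)
\]
for every $t\in\R$ with $q+t>0$. Dividing by $t>0$ and $t<0$ separately and passing to the limit yields
\[
P'_-({\bf M},q)\;\le\;\lambda({\bf M},\nu_q)\;\le\;P'_+({\bf M},q).
\]
Symmetrically, using $\nu_{q+t}$ as the test measure at $q$ gives
\[
P({\bf M},q)\;\ge\;h_{\nu_{q+t}}(\sigma)+q\,\lambda({\bf M},\nu_{q+t})\;=\;P({\bf M},q+t)-t\,\lambda({\bf M},\nu_{q+t}),
\]
so $P({\bf M},q+t)-P({\bf M},q)\le t\,\lambda({\bf M},\nu_{q+t})$. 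Therefore, to close the loop it suffices to show that $\lambda({\bf M},\nu_{q+t})\to\lambda({\bf M},\nu_q)$ as $t\to 0$.

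The core of the argument is this continuity assertion, and it is where I expect the main work to lie. My approach is to take an arbitrary sequence $t_n\to 0$, extract a weak-$*$ limit point $\nu$ of $\{\nu_{q+t_n}\}$, and identify $\nu$ with $\nu_q$ via the uniqueness statement in Theorem \ref{thm-matrix}. Using upper semicontinuity of the entropy map $\mu\mapsto h_\mu(\sigma)$ on $\mathcal{M}(\A^\N,\sigma)$ and of the sub-additive Lyapunov exponent $\mu\mapsto\lambda({\bf M},\mu)$ (the latter follows from the Cao--Feng--Huang sub-additive variational principle, being an infimum of continuous functionals of the form $\tfrac1n\int\log\|M_{i_1}\cdots M_{i_n}\|\,d\mu$), and the continuity of $q\mapsto P({\bf M},q)$, the identity $h_{\nu_{q+t_n}}(\sigma)+(q+t_n)\lambda({\bf M},\nu_{q+t_n})=P({\bf M},q+t_n)$ forces $h_\nu(\sigma)+q\lambda({\bf M},\nu)\ge P({\bf M},q)$; the reverse inequality comes from the variational principle, so $\nu$ is an equilibrium state for $({\bf M},q)$ and hence $\nu=\nu_q$ by uniqueness.

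Finally, from the same identity one reads off
\[
\lambda({\bf M},\nu_{q+t_n})\;=\;\frac{P({\bf M},q+t_n)-h_{\nu_{q+t_n}}(\sigma)}{q+t_n},
\]
whose $\liminf$ is at least $\lambda({\bf M},\nu_q)$ by $\limsup h_{\nu_{q+t_n}}(\sigma)\le h_\nu(\sigma)=h_{\nu_q}(\sigma)$, while $\limsup\lambda({\bf M},\nu_{q+t_n})\le\lambda({\bf M},\nu)=\lambda({\bf M},\nu_q)$ directly from upper semicontinuity. This gives $\lambda({\bf M},\nu_{q+t_n})\to\lambda({\bf M},\nu_q)$, so the estimate $P({\bf M},q+t)-P({\bf M},q)\le t\,\lambda({\bf M},\nu_{q+t})$ for $t>0$ yields $P'_+({\bf M},q)\le\lambda({\bf M},\nu_q)$; the analogous bound for $t<0$ gives $P'_-({\bf M},q)\ge\lambda({\bf M},\nu_q)$. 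Combined with the earlier subgradient inequalities this proves $P'_-({\bf M},q)=P'_+({\bf M},q)=\lambda({\bf M},\nu_q)$, completing the lemma.
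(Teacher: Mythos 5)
Your proof is correct. The paper, however, disposes of the lemma in one line by citing the Ruelle-type derivative formula of Feng (2004), Theorem 1.2, namely $P'({\bf M},q-)=\inf\{\lambda({\bf M},\mu):\mu\in\I_q\}$ and $P'({\bf M},q+)=\sup\{\lambda({\bf M},\mu):\mu\in\I_q\}$, where $\I_q$ is the set of equilibrium states for $({\bf M},q)$; the uniqueness $\I_q=\{\nu_q\}$ from Theorem~\ref{thm-matrix} then forces the two one-sided derivatives to coincide. What you have done is to reprove, from scratch, exactly the special case of that formula in which the equilibrium state is unique. The tools you invoke --- the two subgradient inequalities from the variational principle, weak-$*$ compactness of the space of invariant measures, upper semicontinuity of $\mu\mapsto h_\mu(\sigma)$ on a subshift and of $\mu\mapsto\lambda({\bf M},\mu)$ (as an infimum of upper semicontinuous integrals of locally constant $[-\infty,\infty)$-valued functions), and continuity of $P({\bf M},\cdot)$ by convexity --- are precisely the ingredients of the cited theorem. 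By exploiting uniqueness throughout you avoid the extra bookkeeping that Feng's theorem needs to handle non-unique equilibrium states, so your version is shorter than a full proof of that result while remaining self-contained; the paper simply trades transparency for brevity by citing. Two points to tidy: you should spell out the sub-sub-sequence argument that promotes convergence of $\lambda({\bf M},\nu_{q+t_n})$ along the extracted weak-$*$ convergent subsequence to convergence along \emph{every} sequence $t_n\to 0$; and when bounding $\limsup\bigl[(q+t_n)\lambda({\bf M},\nu_{q+t_n})\bigr]$ it is worth noting that the Lyapunov exponents in question are uniformly bounded (this follows from $\lambda({\bf M},\nu_{q'})=(P({\bf M},q')-h_{\nu_{q'}}(\sigma))/q'$ with $0\le h_{\nu_{q'}}(\sigma)\le\log k$ and $q'$ confined to a neighbourhood of $q$), so that the factor $q+t_n\to q$ can be pulled out of the $\limsup$ without further comment.
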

\begin{proof}
Let $q>0$, and let $\I_q$ be the collection of all equilibrium states for $({\bf M}, q)$. By Theorem \ref{thm-matrix},  $\I_q=\{\nu_q\}$ is a singleton.  Now \eqref{e-derivative}  follows from  the Ruelle-type derivative
formula of pressure functions obtained in \cite[Theorem 1.2]{Feng2004}:
$$
P'({\bf M}, q-)=\inf\{\lambda({\bf M},\mu): \; \mu\in \I_q\},\quad
P'({\bf M}, q+)=\sup\{\lambda({\bf M}, \mu): \; \mu\in \I_q\}.
$$
We remark that  although \cite[Theorem 1.2]{Feng2004} only deals with non-negative matrices, the proof given there works for arbitrary matrices.
\end{proof}

\begin{proof}[Proof of Theorem \ref{thm-1.6}] Let $\A=\{1,\ldots, k\}$. For $n\in \N$, set  
$$
\Omega_n=\{I\in \A^n:\; M_I\neq {\bf 0}\} \quad \mbox{and}\quad t_n=\#\Omega_n. 
$$
Clearly we have $t_{n+m}\leq t_nt_m$ and thus the following limit exists:
$$
\lim_{n\to \infty}\frac{1}{n}\log t_n=: h.
$$

Next we prove the following three properties are equivalent: 
\begin{itemize}
\item[(i)]
${\bf M}$ has a uniform Lyapunov exponent modula $0$; 
\item[(ii)] $P({\bf M}, \cdot)$ is affine on $(0,\infty)$;
\item[(iii)]$P({\bf M}, \cdot)$ is affine on $(a,b)$ for some $0<a<b<\infty$;
\end{itemize}
Since (ii)$\Rightarrow$(iii) is trivial, it suffices to prove the directions (i)$\Rightarrow$(ii) and (iii)$\Rightarrow$(i). 

We first prove (i)$\Rightarrow$(ii). Suppose that ${\bf M}$ has a uniform Lyapunov exponent modula $0$. Then there exists a constant $u\in \R$ such that 
$$
\|M_I\|\approx e^{un}\quad\mbox{ for } n\in \N, \; I\in \Omega_n.
$$
Hence for given $q>0$, 
$$
\sum_{I\in \A^n}\|M_I\|^q=\sum_{I\in \Omega^n}\|M_I\|^q \approx t_n e^{uqn},
$$
which implies $P({\bf M}, q)=h+uq$. Hence $P({\bf M}, \cdot)$ is affine on $(0,\infty)$. 

Next we prove (iii)$\Rightarrow$(i).   Suppose that $P({\bf M}, \cdot)$ is affine on some finite interval $(a,b)\subset (0,\infty)$. Then there exist $h_1, u_1\in \R$ such that
 $$P({\bf M}, q)=h_1+u_1q$$ for $q\in (a,b)$. By Lemma \ref{lem-derivative}, we have 
 $$
 u_1=P'({\bf M}, q)=\lambda({\bf M}, \nu_q)\qquad \mbox { for }q\in (a,b),
 $$
 where  $\nu_q$ is the equilibrium state for $({\bf M}, q)$ (thus $P({\bf M}, q)=h_{\nu_q}(\sigma)+q\lambda({\bf M}, \nu_q)$). Hence we have  
 $$
 \lambda({\bf M}, \nu_q)=u_1,\quad h_{\nu_q}(\sigma)=h_1 \qquad \mbox { for all }q\in (a,b).
 $$ 
Therefore for any $q_1, q_2\in (a, b)$, $\nu_{q_1}$ is an equilibrium state for $({\bf M}, q_2)$ since $$P({\bf M}, q_2)=h_1+u_1q_2=h_{\nu_{q_1}}(\sigma)+q_2\lambda({\bf M}, \nu_{q_1}).$$ However,  $({\bf M}, q_2)$  has a unique equilibrium state $\nu_{q_2}$, so we must have $\nu_{q_1}=\nu_{q_2}$.  Now fix two different elements $q_1, q_2$ in $(a,b)$. Since $\nu_{q_1}=\nu_{q_2}$, by Theorem \ref{thm-matrix}, we have
 $$
 \exp(-(h_1+u_1q_1)n)\|M_I\|^{q_1}\approx \exp(-(h_1+u_1q_2)n)\|M_I\|^{q_2} \qquad\mbox{ for }n\in \N,\; I\in \Omega_n,
 $$
which implies  $\|M_I\|\approx \exp(u_1n)$ for $n\in \N$ and $I\in \Omega_n$, that is, ${\bf M}$ has a uniform Lyapunov exponent modulo $0$.  This completes the proof of  (iii)$\Rightarrow$(i).

Now suppose that ${\bf M}$ has a uniform Lyapunov exponent modulo $0$. Then $P({\bf M}, \cdot)$ is affine on $(0,\infty)$ and thus \eqref{e-PC} holds.  

Conversely, suppose \eqref{e-PC} holds. By  convexity, $P({\bf M}, \cdot)$ is affine on $[2,6]$, which implies that  ${\bf M}$ has a uniform Lyapunov exponent modulo $0$. This completes the proof of the theorem.
\end{proof}

\section{Absolute Continuity of self-similar measures with finite type condition}
\label{S-5}
This section is devoted to the study of an extended version of Question \ref{ques-2}.

Let $\{S_j \}_{j=1}^m$ be a family of contractive similitudes on $\R$ of the form \eqref{e-IFS}.
Let $K$ denote the self-similar set generated by $\{S_j\}_{j=1}^m$ (cf. \cite{Hutchinson1981}), that is, $K$ is the unique non-empty compact set in $\R$ such that
$$
K=\bigcup_{j=1}^m S_j(K).
$$

Given a probability weight $\{p_j\}_{j=1}^m$, let  $\mu$ be the self-similar measure generated by $\{S_j\}_{j=1}^m$ and $\{p_j\}_{j=1}^m$.   It is supported on $K$, and contains no atoms (see e.g. \cite[Proposition 2.2]{FengLau2009}). As a  well-known fact,  $\mu$ is either singular or absolutely continuous with respect to  ${\mathfrak L}^1$, the Lebesgue measure
on $\R$ (see e.g. \cite[Proposition 3.1]{PeresSchlagSolomyak1998} for a proof). A similar argument  yields that  $\mu$ is also either singular or absolutely continuous with respect to $\mathcal{H}^s \big|_K$, where $$s=\dim_H K$$ is the Hausdorff dimension of $K$, $\mathcal{H}^s$  stands for the $s$-dimensional Hausdorff measure, and $\mathcal{H}^s \big|_K$ denotes the restriction of $\mathcal{H}^s$ on $K$. The reader is referred to \cite{Falconer2003, Mattila1995} for the definitions of Hausdorff dimension and Hausdorff measures.  Below we will provide criteria to determine these dichotomies under an additional separation assumption on $\{S_j\}_{j=1}^m$.

Write $S_J=S_{j_1}\circ \cdots\circ S_{j_n}$ for $J=j_1\cdots j_n$.
\begin{de}
\label{de-6.1}
We say that  $\{S_j\}_{j=1}^m$ satisfies the finite type condition if there is a finite set $\Gamma$ of non-negative numbers such that for each integer $n >0$ and any two words of indices $J = j_1 \cdots j_n$ and $J' = j_1'\cdots j_n'$,
\[
\text{ either } \ \ \rho^{-n} |S_J(0) - S_{J'}(0)| > c \ \ \text{ or } \ \ \rho^{-n} | S_J(0) - S_{J'}(0) | \in \Gamma,
\]
where $c: = (1-\rho)^{-1} (b_m- b_1)$.
\end{de}

The above definition of finite type condition was adopted from \cite{Feng2003}, and is slightly stronger than the one introduced by Ngai and Wang \cite{NgaiWang2001}. \footnote{In \cite{LauNgaiRao2001}, Lau, Ngai and Rao introduced an essentially identical separation condition called weak separation condition*.}
The finite type condition includes many interesting overlapping cases. For instance, if $\rho$ is the reciprocal of a Pisot number $\beta$ and $b_j\in {\Bbb Q}[\beta]$ for $j=1,\ldots, m$, where $\Q[\beta]$ stands for the field of $\beta$ over ${\Bbb Q}$, then $\{\rho x + b_j\}_{j=1}^m$ satisfies the finite type condition (see e.g.~\cite{NgaiWang2001}). Recall that $\beta>1$ is called  a {\it Pisot number} if $\beta$ is an algebraic integer  so that all its algebraic conjugates are less than $1$ in modulus.

 It is known (cf.~\cite{Nguyen2002}) that the finite type condition  implies the weak separation condition introduced by Lau and Ngai in \cite{LauNgai1999}. Hence due to   \cite[p. 3535]{Zerner1996}, if $\{S_j\}_{j=1}^m$ satisfies the finite type condition, then
\begin{equation}
\label{e-6.1}
0 < \mathcal{H}^s(K) < \infty;
\end{equation}
 moreover
\begin{equation}
\label{e-5.8}
\mathcal{H}^s(K \cap [x-r, x+r]) \approx r^s, \quad \mbox{ for }  x \in K,\; 0<r<1.
\end{equation}

It is known that under the assumption of finite type condition, the distribution of $\mu$ can be characterized through symbolic dynamics and  matrix products (cf.~\cite{Feng2003, Lalley1998}). Below we describe the characterization given in \cite{Feng2003}.

In \cite{Feng2003}, Feng constructed an irreducible subshift of finite type $\Sigma_A$ over a finite alphabet $\{1,\ldots, k\}$, a positively irreducible tuple ${\bf M}=(M_1,\ldots, M_k)$ of non-negative $d\times d$ matrices
 for certain $d$,   and a family of closed intervals $\{\Delta_I\}_{I\in {\mathcal L}(\Sigma_A)}$, where $\mathcal L(\Sigma_A)$ denotes the collection of all finite admissible words associated with $\Sigma_A$
 including the empty word $\varepsilon$ (see Section \ref{S-3.1}),  such that the following properties (C1)-(C5) hold:

\begin{itemize}
\item[(C1)]  $\{\Delta_I\}_{I\in {\mathcal L}(\Sigma_A)}$ has a nested structure, in the sense that,  for each $n\in \N$,  ${\rm int}(\Delta_{I})$ ($I\in \mathcal L_n(\Sigma_A)$) are disjoint subintervals of $\Delta_\varepsilon$, where ${\rm int}(A)$ stands for the interior of $A$; and moreover $\Delta_{i_1\cdots i_n}\subseteq \Delta_{i_1\cdots i_{n-1}}$ for any $i_1\cdots i_n\in  \mathcal L_n(\Sigma_A)$, where $\mathcal L_n(\Sigma_A)$ denotes the collection of admissible words of length $n$.
\item[(C2)] The lengths of  $\Delta_I$'s satisfy
$$
|\Delta_I|\approx \rho^n \quad \mbox{ for } n\in \N,\; I\in \mathcal L_n(\Sigma_A).
$$

\item[(C3)] $K\cap \Delta_\epsilon=K\cap \left(\bigcup_{I\in \mathcal  L_n(\Sigma_A)}\Delta_I\right)$ for any $n\in \N$. Moreover the endpoints of $\Delta_I$ are contained in $K$ for any $I\in \mathcal L(\Sigma_A)$.

\item[(C4)] $\mu(\Delta_I)\approx \|M_{i_1}\cdots M_{i_n}\| \quad \mbox{ for } n\in \N,\; I=i_1\cdots i_n\in \mathcal L_n(\Sigma_A)$.

\item[(C5)] For $i_1\cdots i_n\in \{1,\ldots,k\}^n$, $M_{i_1}\cdots M_{i_n}\neq {\bf 0}$ if and only if  $i_1\cdots i_n\in \mathcal L_n(\Sigma_A)$.

\end{itemize}

 It can be proved that the properties (C2)-(C3) imply that
 \begin{equation}
 \label{e-dim}
 s:=\dim_HK= \lim_{n\to \infty} \frac{\log \#   (\mathcal L_n(\Sigma_A))}{\log \rho^{-n}}=\frac{h_{\rm top}(\Sigma_A)}{\log (1/\rho)}.
 \end{equation}

Now we are ready to state the main result of this section.

\begin{thm}
\label{thm-6.1} Assume that $\{S_j\}_{j=1}^m$ satisfies the finite type condition. Let ${\bf M}=(M_1,\ldots, M_k)$ be constructed as above. Let $s=\dim_HK$. Then the following statements hold: \begin{itemize}
 \item[(i)] $\mu\ll {\mathcal H}^s|_K$ if and only if ${\bf M}$ has a uniform Lyapunov exponent modulo $0$.
 \item[(ii)]  $\mu\ll {\mathfrak L}^1$
if and only if $h_{\rm top}(\Sigma_A)=\log (1/\rho)$  and ${\bf M}$ has a uniform Lyapunov exponent modulo $0$.
 \end{itemize}
\end{thm}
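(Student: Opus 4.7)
The plan is to translate the measure-theoretic comparison on $K$ into a comparison of Gibbs measures on $\Sigma_A$, and then invoke uniqueness of equilibrium/Parry measures. I would begin with a normalization: by (C1), (C3), (C4), and the fact that $\mu$ is non-atomic, the cylinders $\{\Delta_I\}_{I\in \mathcal{L}_n(\Sigma_A)}$ cover $K$ with $\mu$-null boundary overlap, so
\[
\sum_{I \in \mathcal{L}_n(\Sigma_A)} \|M_I\| \approx \sum_{I \in \mathcal{L}_n(\Sigma_A)} \mu(\Delta_I) = 1.
\]
By (C5), $\mathcal{L}(Y_{\bf M}) = \mathcal{L}(\Sigma_A)$, so $P({\bf M}) = 0$, and by \eqref{e-dim}, $r({\bf M}) = \exp(-h_{\rm top}(\Sigma_A)) = \rho^s$. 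Lemma \ref{lem-normalize}(ii) then reduces the statement ``${\bf M}$ has a uniform Lyapunov exponent modulo $0$'' to the uniform estimate $\|M_I\| \approx \rho^{s|I|}$ for all $I \in \mathcal{L}(\Sigma_A)$.

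For the direction $(\Leftarrow)$ of (i), the estimate $\|M_I\| \approx \rho^{s|I|}$ combined with (C4) and \eqref{e-5.8} yields $\mu(\Delta_I) \approx \mathcal{H}^s(\Delta_I \cap K)$ for every $I$. Because the cylinders generate the Borel $\sigma$-algebra on $\Delta_\varepsilon$ through the nested structure in (C1)--(C2), this upgrades to a two-sided comparison between $\mu$ and $\mathcal{H}^s|_K$ as Borel measures, in particular $\mu \ll \mathcal{H}^s|_K$.

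For the direction $(\Rightarrow)$ of (i) I would lift to $\Sigma_A$ through the coding $\pi : \Sigma_A \to K$ defined by $\pi((I_n)_{n=1}^\infty) = \bigcap_n \Delta_{I_1 \cdots I_n}$, well-defined and continuous by (C1)--(C2). Exploiting the finite type condition to control the overlap multiplicity, one obtains shift-invariant lifts $\tilde\mu, \tilde\nu$ on $\Sigma_A$ of $\mu$ and of $\mathcal{H}^s|_K/\mathcal{H}^s(K)$ satisfying the Gibbs estimates
\[
\tilde\mu([I]) \approx \|M_I\|, \qquad \tilde\nu([I]) \approx \rho^{s|I|} \qquad (I \in \mathcal{L}(\Sigma_A)).
\]
Since $P({\bf M}, 1) = 0$, any such $\tilde\mu$ is an equilibrium state for $({\bf M},1)$, and any such $\tilde\nu$ has $h_{\tilde\nu}(\sigma) = h_{\rm top}(\Sigma_A)$; hence by the uniqueness parts of Theorem \ref{thm-matrix} (with $q=1$) and Theorem \ref{thm-Parry}, $\tilde\mu = \nu_1$ and $\tilde\nu = \nu_P$. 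The hypothesis $\mu \ll \mathcal{H}^s|_K$ then forces $\nu_1 \ll \nu_P$, and since both are ergodic they must coincide, after which a comparison of the two Gibbs estimates yields $\|M_I\| \approx \rho^{s|I|}$ for all $I$.

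For (ii), note $s = \dim_H K \le 1$ since $K \subset \R$, and \eqref{e-dim} identifies the condition $h_{\rm top}(\Sigma_A) = \log(1/\rho)$ with $s = 1$. If $s < 1$ then $\mathfrak{L}^1(K) = 0$ while $\mu(K) = 1$, so $\mu \perp \mathfrak{L}^1$; if $s = 1$ then $\mathcal{H}^1|_K = \mathfrak{L}^1|_K$, and $\mu \ll \mathfrak{L}^1$ is equivalent to $\mu \ll \mathcal{H}^1|_K$, so (i) delivers the criterion. The main obstacle I expect is in the $(\Rightarrow)$ direction of (i): under the overlapping finite type condition $\pi$ is typically not injective, and one must work with care to construct the lifts $\tilde\mu, \tilde\nu$, verify their shift-invariance, and confirm the Gibbs estimates on cylinders. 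The finite type condition is exactly what ensures each point of $K$ has a uniformly bounded number of preimages at each level, making the passage of absolute continuity back and forth across $\pi$ valid.
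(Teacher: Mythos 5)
The forward implication of (i) is where your proposal has a genuine gap. Your plan is to produce shift-invariant lifts $\tilde\mu,\tilde\nu$ on $\Sigma_A$ of $\mu$ and of normalized $\mathcal H^s|_K$ satisfying the Gibbs bounds, identify them with the equilibrium state for $({\bf M},1)$ and with the Parry measure via the uniqueness parts of Theorems \ref{thm-matrix} and \ref{thm-Parry}, and then assert that $\mu\ll\mathcal H^s|_K$ ``forces'' $\nu_1\ll\nu_P$. That last step is not a consequence of anything you have set up, and the bounded multiplicity of the coding map $\pi$ does not supply it. Two separate obstructions: (a) a genuine lift of $\mu|_{\Delta_\varepsilon}$ does inherit the Gibbs bounds from (C4) (because $\pi^{-1}(\mathrm{int}\,\Delta_I)\subset[I]\subset\pi^{-1}(\Delta_I)$ and $\mu$ is non-atomic), but it has no reason to be $\sigma$-invariant -- there is no map on $\Delta_\varepsilon$ that $\pi$ intertwines with the shift -- and the standard repair (Ces\`aro averaging along $\sigma$ and passing to a weak-* limit) restores invariance and the Gibbs bounds but destroys the lift property, and weak-* limits do not preserve absolute continuity between the two averaged sequences; (b) even for honest lifts, equivalence of the projections does not preclude mutual singularity upstairs: two measures can sit on disjoint families of branches over the same points even when every fiber of $\pi$ is uniformly finite. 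So the sentence ``$\mu\ll\mathcal H^s|_K$ then forces $\nu_1\ll\nu_P$'' is essentially the entire content of this direction, and your proposed mechanism for it would fail.

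The paper avoids lifting $\mu$ altogether: it transfers in the unproblematic direction, pushing the equilibrium state $\nu$ for $({\bf M},1)$ \emph{down} by setting $\widetilde\mu=\nu\circ\pi^{-1}$, which by Lemma \ref{lem-pressure}(ii)--(iii) and (C4) is comparable to $\mu|_{\Delta_\varepsilon}$ with no invariance issue arising. Then $\mu\ll\mathcal H^s|_K$ together with \eqref{e-6.1} gives $\dim_H\mu=s$, hence $\dim_H\widetilde\mu=s$, hence lower local dimension $\geq s$ almost everywhere (\cite{FanLauRao2002}); via (C2) this becomes $\liminf_n -\frac1n\log\nu([i_1\cdots i_n])\geq s\log(1/\rho)=h_{\rm top}(\Sigma_A)$ for $\nu$-a.e.\ point, so Shannon--McMillan--Breiman gives $h_\nu(\sigma)\geq h_{\rm top}(\Sigma_A)$, $\nu$ is the Parry measure, and Lemma \ref{lem-pressure}(iv) concludes. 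To rescue your route you would have to insert this dimension/entropy argument exactly where you wrote ``forces.'' Two smaller remarks: in your $(\Leftarrow)$ direction the cylinder comparison only covers $K\cap\Delta_\varepsilon$, so you get $\mu|_{\Delta_\varepsilon}\ll\mathcal H^s|_K$ and must invoke the dichotomy recalled at the beginning of Section~\ref{S-5} (as the paper does) to conclude for $\mu$ itself; your treatment of (ii) agrees with the paper's and is fine.
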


\begin{rem}
{\rm
\begin{itemize}
 \item[(i)] In \cite[Theorem 1.3]{LauNgaiRao2001}, Lau, Ngai and Rao proved that, under a more general  assumption on $\{S_j\}_{j=1}^m$, $\mu$ is absolutely continuous with respect to ${\mathfrak L}^1$ if and only if certain constructed matrix has spectral radius $\rho$. Theorem \ref{thm-6.1}(ii) provided an alternative approach in deciding the type of $\mu$, which is checkable by Theorem \ref{thm-1.4}.
\item[(ii)] In \cite[Proposition 3.19]{HareHareNg2016}, Hare, Hare and Ng gave a sufficient condition  (in terms of
certain growth rate of matrix products) for $\mu$ to be absolutely continuous with respect to ${\mathcal H}^s|_K$, without indicating how to check  that condition.
\end{itemize}
}
\end{rem}

Let $P({\bf M})$ be the topological pressure of ${\bf M}$ (cf. \eqref{e-pM}), and $\nu$ the equilibrium state for  $({\bf M},1)$ (see Section~\ref{S-pressure}). Before proving Theorem \ref{thm-6.1}, we first give the following.

\begin{lem}
\label{lem-pressure}
 The following properties hold:
\begin{itemize}
\item[(i)] $P({\bf M})=0$.
\item[(ii)] $\nu$ satisfies
 \begin{equation}
 \label{e-nu}
 \nu([I])\approx \|M_{i_1}\cdots M_{i_n}\|\quad  \mbox{ for }n\in \N,\; I=i_1\cdots i_n\in  \mathcal L_n(\Sigma_A).
 \end{equation}
\item[(iii)] $\nu$ has no atoms.
\item[(iv)] ${\bf M}$ has a uniform Lyapunov exponent modulo $0$ if and only if $\nu$ is the Parry measure on $\Sigma_A$.
\end{itemize}
\end{lem}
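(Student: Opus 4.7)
My plan is to chain properties (C1)--(C5) with the general results of Section \ref{S-pressure} and Theorem \ref{thm-Parry}. For (i), I will use the nested structure (C1) and the covering identity (C3) to see that, for each $n$, the intervals $\{\Delta_I : I \in \mathcal L_n(\Sigma_A)\}$ have pairwise disjoint interiors and their union intersects $K$ in $K \cap \Delta_\varepsilon$. Since $\mu$ is supported on $K$ and non-atomic, the shared boundary points carry no $\mu$-mass, so $\sum_{I \in \mathcal L_n(\Sigma_A)} \mu(\Delta_I) = 1$. Combining this with (C4) and discarding the zero terms via (C5) yields $\sum_{I \in \A^n} \|M_I\| \approx 1$, hence $P({\bf M}) = 0$.

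Part (ii) then follows immediately from Theorem \ref{thm-matrix} at $q = 1$: since $P({\bf M}) = 0$, the Gibbs estimate reduces to $\nu([I]) \approx \|M_I\|$ on $\A^n$, and for $I \notin \mathcal L_n(\Sigma_A)$ both sides vanish by (C5). For (iii), I fix $x = (x_i) \in \Sigma_A$ and exploit that the intervals $\Delta_{x_1 \cdots x_n}$ are nested with lengths $\approx \rho^n$ by (C2). Non-atomicity of $\mu$ forces $\mu(\Delta_{x_1 \cdots x_n}) \to 0$, and then (C4) gives $\|M_{x_1 \cdots x_n}\| \to 0$. By (ii), $\nu([x_1 \cdots x_n]) \to 0$, and since $\{x\} \subset [x_1 \cdots x_n]$ for every $n$ I conclude $\nu(\{x\}) = 0$.

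For (iv), I first observe that $Y_{\bf M} = \Sigma_A$ by (C5), and that (i) together with the definition \eqref{e-RM} gives $r({\bf M}) = e^{-h_{\rm top}(\Sigma_A)}$. By Lemma \ref{lem-normalize}(ii), ${\bf M}$ has a uniform Lyapunov exponent modulo $0$ if and only if $\|M_I\| \approx e^{-n h_{\rm top}(\Sigma_A)}$ for $I \in \mathcal L_n(\Sigma_A)$; by (ii) this is equivalent to $\nu([I]) \approx e^{-n h_{\rm top}(\Sigma_A)}$ on admissible words, which by the uniqueness clause of Theorem \ref{thm-Parry} is exactly the characterization of $\nu$ as the Parry measure on $\Sigma_A$.

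The step I expect to require the most care is (i): one must verify that the at most countably many shared boundary points of the $\Delta_I$ at level $n$ carry zero $\mu$-mass, so that pairwise disjoint interiors upgrade to the exact identity $\sum_I \mu(\Delta_I) = \mu(\Delta_\varepsilon) = 1$. This ultimately reduces to the non-atomicity of $\mu$ already recorded before the lemma. Once (i) is established, the remaining parts are essentially bookkeeping against Theorem \ref{thm-matrix} and Theorem \ref{thm-Parry}.
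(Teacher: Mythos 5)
Your proof is correct and mirrors the paper's argument in all four parts: non-atomicity of $\mu$ together with (C1)--(C5) gives $P({\bf M})=0$, Theorem~\ref{thm-matrix} at $q=1$ gives (ii), shrinking cylinders give (iii), and the Parry characterization closes (iv). Two minor remarks: in (i), the constant is $\sum_{I\in\mathcal L_n(\Sigma_A)}\mu(\Delta_I)=\mu(\Delta_\varepsilon)$ rather than $1$ (the construction from \cite{Feng2003} does not assume $K\subset\Delta_\varepsilon$), but since $\mu(\Delta_\varepsilon)$ is a positive constant the conclusion $P({\bf M})=0$ is unaffected; and in (iv) the paper does not invoke Lemma~\ref{lem-normalize}(ii) but instead extracts $\lambda=-h_{\rm top}(\Sigma_A)$ by summing the Gibbs estimate over $\mathcal L_n(\Sigma_A)$ and comparing with \eqref{e-p1}, so your route through Lemma~\ref{lem-normalize}(ii) and the observation $Y_{\bf M}=\Sigma_A$ is a clean equivalent shortcut.
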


\begin{proof}
 To prove (i),  recall that $\mu$ is supported on $K$ and has no atoms. By (C5), (C4), (C1) and (C3), we have
 \begin{eqnarray*}
 \sum_{i_1\cdots i_n\in \{1,\ldots,k\}^n}\|M_{i_1}\cdots M_{i_n}\|& = & \sum_{i_1\cdots i_n\in \mathcal L_n(\Sigma_A)}\|M_{i_1}\cdots M_{i_n}\|\\
 &\approx & \sum_{i_1\cdots i_n\in \mathcal L_n(\Sigma_A)}\mu(\Delta_{i_1\cdots i_n})\\
 &=& \mu(\Delta_\varepsilon),
 \end{eqnarray*}
 which implies that $P({\bf M})=0$. This proves (i).  Property (ii) just follows from (i) and  Theorem \ref{thm-matrix}.  To see (iii), recall that $\mu$ has no atoms. This implies
 $\mu(\Delta_{i_1\cdots i_n})\to 0$ as $n\to \infty$. By (C4) and \eqref{e-nu}, we have $\nu([i_1\cdots i_n])\to 0$ as $n\to \infty$, from which (iii) follows.

  Next we prove (iv).   In one direction, suppose that $\nu$ is the Parry measure on $\Sigma_A$.  By Theorem \ref{thm-Parry}, $\nu([I])\approx e^{|I|h_{\rm top}(\Sigma_A)}$ for  $I\in \mathcal L(\Sigma_A)$, which together with \eqref{e-nu} and (C5) yields that
  ${\bf M}$ has a uniform Lyapunov exponent modulo $0$. In the other direction, suppose that ${\bf M}$ has a uniform Lyapunov exponent modulo $0$. By (C5) and \eqref{e-nu}, there exists $\lambda\in \R$ so that
  $\nu([I])\approx e^{n\lambda}$ for $I=i_1\cdots i_n\in  \mathcal L_n(\Sigma_A)$.  This implies that $e^{n\lambda}\cdot\# (\mathcal L_n(\Sigma_A))\approx 1$, and so $\lambda=-h_{\rm top}(\Sigma_A)$ by \eqref{e-p1}. Hence $\nu([I])\approx e^{-|I| h_{\rm top}(\Sigma_A)}$ for  $I\in  \mathcal L(\Sigma_A)$. By Theorem \ref{thm-Parry}, $\nu$ is the Parry measure on $\Sigma_A$. This completes the proof.
  \end{proof}

  \begin{proof}[Proof of Theorem \ref{thm-6.1}]
Let $\nu$ be the equilibrium state for $({\bf M},1)$. By Lemma \ref{lem-pressure} (iv),  ${\bf M}$ has a uniform Lyapunov exponent modulo $0$ if and only if $\nu$ is the Parry measure on $\Sigma_A$. Hence to prove part (i) of the theorem, it is equivalent to show  that $\mu \ll {\mathcal H}^s|_K$ if and only if  $\nu$ is the Parry measure on $\Sigma_A$.

 First assume that   $\nu$ is the Parry measure on $\Sigma_A$.  By (C4), \eqref{e-nu}  and \eqref{e-dim},
 $$
 \mu(\Delta_{i_1\cdots i_n})\approx \|M_{i_1}\cdots M_{i_n}\|\approx \nu([i_1\cdots i_n])\approx  e^{-nh_{\rm top}(\Sigma_A)} = \rho^{s n}
 $$
 for $i_1\cdots i_n\in \mathcal L_n(\Sigma_A)$.
 Thus by \eqref{e-5.8} we have
 \begin{equation}
 \label{e-cite1}
 \mu(\Delta_\varepsilon\cap[x-\rho^n,x+\rho^n])\approx \rho^{ns}\approx {\mathcal H}^s|_K ([x-\rho^n,x+\rho^n])
 \end{equation}
  for $n\in \N$ and $x\in K\cap \Delta_\epsilon$,
  which implies  that $\mu|_{\Delta_\varepsilon}\ll  {\mathcal H}^s|_K$. Since $\mu$ is either purely singular or absolutely continuous with respect to ${\mathcal H}^s|_K$, we have   $\mu\ll {\mathcal H}^s|_K$.

 Next assume that  $\mu\ll {\mathcal H}^s|_K$. Then $\dim_H\mu=s$, where $\dim_H\mu$ stands for the Hausdorff dimension of $\mu$ (cf. \cite{FanLauRao2002}). Define $\pi: \Sigma_A\to K\cap \Delta_\epsilon$ by
 $$
 \{\pi({\bf i})\}=\bigcap_{n=1}^\infty \Delta_{i_1\cdots i_n},\quad \mbox{ for }{\bf i}=(i_n)_{n=1}^\infty.
 $$
  Let $\widetilde{\mu}=\nu\circ \pi^{-1}$. Since $\nu$ has no atoms by Lemma \ref{lem-pressure}(iii), we have by \eqref{e-nu},
 $$
 \widetilde{\mu}(\Delta_{i_1\cdots i_n})= \nu([i_1\cdots i_n])\approx \|M_{i_1}\cdots M_{i_n}\|\approx \mu(\Delta_{i_1\ldots i_n})$$
  for $n\in \N$ and $i_1\cdots i_n\in \L_n(\Sigma_A)$,
 which implies that there exists a constant $C>0$ such that $C^{-1}\mu|_{\Delta_{\varepsilon}}\leq  \widetilde{\mu}\leq C\mu|_{\Delta_{\varepsilon}}$. Hence $\dim_H \widetilde{\mu}=\dim_H\mu=s$.  It follows that (cf. \cite[Theorem 1.2]{FanLauRao2002}) that
    $$
 \liminf_{n\to \infty} \frac{\log \widetilde\mu([x-\rho^n,x+\rho^n])}{n\log \rho}\geq s \quad \mbox{ for $\widetilde{\mu}$-a.e.~$x\in \R$},
  $$
  equivalently,
   \begin{equation}
   \label{e-density}
 \liminf_{n\to \infty} \frac{\log \widetilde\mu([\pi {\bf i}-\rho^n, \pi {\bf i}+\rho^n])}{n\log \rho}\geq s \quad \mbox{ for $\nu$-a.e.~${\bf i} \in \Sigma_A$}.
  \end{equation}
By (C2), there exists $k_0\in \N$ such that for any ${\bf i}=(i_n)_{n=1}^\infty\in \Sigma_A$,
$$
\Delta_{i_1\cdots i_n}\subset [\pi {\bf i}-\rho^{n-k_0}, \pi {\bf i}+\rho^{n-k_0}],\quad n\in \N.
$$
 This together with  \eqref{e-density} yields that  for $\nu$-a.e.~${\bf i}=(i_n)_{n=1}^\infty\in \Sigma_A$,
  $$
  \liminf_{n\to \infty}
  \frac{\log \nu([i_1\cdots i_n])} {n\log \rho}
 \geq \liminf_{n\to \infty}  \frac{\log \widetilde{\mu}([\pi{\bf i}-\rho^{n-k_0}, \pi {\bf i}+\rho^{n-k_0}]}{n \log \rho} \geq s,
  $$
from which we obtain
  $$
  \liminf_{n\to \infty}
  \frac{-\log \nu([i_1\cdots i_n])} {n}
  \geq s\log (1/\rho)=h_{\rm top}(\Sigma_A)
  $$
for $\nu$-a.e.~${\bf i}=(i_n)_{n=1}^\infty\in \Sigma_A$. By the Shannon-McMillan-Breiman theorem (cf. \cite[p.~93]{Walters1982}), we have $h_\nu(\sigma)\geq h_{\rm top}(\Sigma_A)$,  which implies that $\nu$ is the Parry measure on $\Sigma_A$ by Theorem \ref{thm-Parry}. This proves (i).

Property (ii) just follows from (i), using the facts that $s=h_{\rm top}(\Sigma_A)/\log (1/\rho)=1$ and  $\mathcal H^1|_{\R}$ is equal to the Lebesgue measure ${\mathfrak L}^1$ on $\R$.
\end{proof}

We remark that the following corollary just follows from the  proof of Theorem \ref{thm-6.1}, together with an additional property that $\Delta_\epsilon\subset K$ whenever $\dim_HK=1$ (to be concise, we skip the proof of this property).
\begin{cor} \label{cor-6.1}

Under the condition of Theorem \ref{thm-6.1}, letting $s=\dim_HK$, then we have
\begin{itemize}
\item[(i)] $\mu\ll {\mathcal H}^s|_K\Longleftrightarrow \dim_H\mu=s\Longleftrightarrow \mbox{\eqref{e-cite1} holds for  $n\in \N$ and $x\in K\cap \Delta_\epsilon$}$.
\item[(ii)] $\mu\ll {\mathfrak L}^1\Longleftrightarrow \dim_H\mu=1 \Longleftrightarrow \frac{d\mu}{dx}\in (c_1,c_2) \mbox{ on }\Delta_\epsilon$ for some positive constants $c_1, c_2$.
\end{itemize}
\end{cor}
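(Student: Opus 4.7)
The plan is to factor the proof of Theorem~\ref{thm-6.1}(i) through the two intermediate conditions $\dim_H\mu=s$ and \eqref{e-cite1}. Almost every implication I need has already appeared inside that proof, so this corollary mainly requires a reorganisation of material plus the additional fact (cited in the paper, but not proved) that $\Delta_\varepsilon\subset K$ when $s=1$.

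For part~(i) I would establish the cyclic chain
\[
\mu\ll{\mathcal H}^s|_K \ \Longrightarrow\ \dim_H\mu = s \ \Longrightarrow\ \nu\text{ is the Parry measure on }\Sigma_A \ \Longrightarrow\ \eqref{e-cite1}\ \Longrightarrow\ \mu\ll{\mathcal H}^s|_K.
\]
The first implication uses \eqref{e-6.1}: since $0<{\mathcal H}^s(K)<\infty$, the measure ${\mathcal H}^s|_K$ has Hausdorff dimension $s$, and a nonzero absolutely continuous measure inherits this dimension. The implication $\dim_H\mu=s\Rightarrow\nu$ is Parry is precisely the content of the second half of the proof of Theorem~\ref{thm-6.1}(i): one introduces $\widetilde\mu=\nu\circ\pi^{-1}$, applies \cite[Theorem~1.2]{FanLauRao2002} together with property (C2), and concludes via Shannon--McMillan--Breiman and Theorem~\ref{thm-Parry}. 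The implication $\nu$ Parry $\Rightarrow$ \eqref{e-cite1} is also spelled out in the proof of Theorem~\ref{thm-6.1}(i), combining (C4), \eqref{e-nu}, \eqref{e-dim} and \eqref{e-5.8}. Finally, \eqref{e-cite1} compares $\mu$ and ${\mathcal H}^s|_K$ on balls centred in $K\cap\Delta_\varepsilon$; a standard Vitali-covering argument then yields $\mu|_{\Delta_\varepsilon}\ll{\mathcal H}^s|_K$, and the singular-or-absolutely-continuous dichotomy recalled at the start of Section~\ref{S-5} promotes this to $\mu\ll{\mathcal H}^s|_K$.

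For part~(ii), if $\dim_H\mu=1$ then $1=\dim_H\mu\le\dim_H K\le 1$, so $s=1$ and ${\mathcal H}^1|_K={\mathfrak L}^1|_K$; part~(i) then gives $\mu\ll{\mathfrak L}^1$. The converse $\mu\ll{\mathfrak L}^1\Rightarrow\dim_H\mu=1$ is immediate, since $\mu$ is then a nonzero measure with a Lebesgue density. For the density statement, assume $s=1$ and invoke the cited property $\Delta_\varepsilon\subset K$. Then $K\cap\Delta_\varepsilon=\Delta_\varepsilon$ and \eqref{e-cite1} collapses to
\[
\mu([x-\rho^n,x+\rho^n])\approx \rho^n \qquad\text{for }x\in\Delta_\varepsilon,\ n\in\N,
\]
so the Lebesgue differentiation theorem produces constants $0<c_1<c_2$ with $d\mu/dx\in(c_1,c_2)$ almost everywhere on $\Delta_\varepsilon$. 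Conversely, a two-sided density bound on the open interval $\Delta_\varepsilon$ trivially implies $\mu|_{\Delta_\varepsilon}\ll{\mathfrak L}^1$, and the dichotomy extends this to $\mu\ll{\mathfrak L}^1$.

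The step that is not an immediate consequence of material already in the paper is the implication \eqref{e-cite1}$\Rightarrow\mu\ll{\mathcal H}^s|_K$ in part~(i): one must turn the uniform ball-ratio estimate into genuine absolute continuity on $\Delta_\varepsilon$ and then lift from $\Delta_\varepsilon$ to the whole of $K$. The local step is handled by a standard Vitali-type differentiation argument, while the extension to $K$ follows either from the self-similar structure (every point of $K$ eventually lies in a rescaled image $S_J(\Delta_\varepsilon)$) or, more cleanly, from the singular-versus-absolutely-continuous dichotomy for $\mu$ with respect to ${\mathcal H}^s|_K$.
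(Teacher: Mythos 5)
Your proof is correct and follows the same route as the paper, which itself gives no details and merely says the corollary ``follows from the proof of Theorem \ref{thm-6.1}'' together with the fact $\Delta_\varepsilon\subset K$ when $\dim_H K=1$. You correctly recognise that the proof of Theorem \ref{thm-6.1}(i) already contains the full cycle $\nu$ Parry $\Rightarrow$ \eqref{e-cite1} $\Rightarrow\mu\ll{\mathcal H}^s|_K\Rightarrow\dim_H\mu=s\Rightarrow\nu$ Parry (equivalently, ULE modulo $0$), which is exactly what is needed to chain the three conditions in (i), and that (ii) reduces to (i) with $s=1$ once one knows $\Delta_\varepsilon\subset K$, so that \eqref{e-cite1} becomes a two-sided density bound on the whole interval $\Delta_\varepsilon$ and Lebesgue differentiation yields $d\mu/dx\in(c_1,c_2)$. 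The one place to be a bit more careful is the claim ``a nonzero absolutely continuous measure inherits this dimension'': this is not true for arbitrary reference measures, but it is valid here precisely because $0<{\mathcal H}^s(K)<\infty$ (your \eqref{e-6.1}) and $\mu$ is supported on $K$, so every Borel set of full $\mu$-measure has positive ${\mathcal H}^s$-measure inside $K$ and hence Hausdorff dimension at least $s$, while $\dim_H\mu\le\dim_H K=s$; you do gesture at this, so no real gap, but the reasoning deserves to be spelled out rather than stated as a general principle.
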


\begin{rem}
It is worth pointing out that  Ruiz \cite{Ruiz2008} proved the equivalence between $\mu\ll {\mathfrak L}^1$ and $\dim_H \mu=1$, in the special case  when $\{S_j\}_{j=1}^m$ is an integral iterated function system, i.e., $S_j$ is of the form $S_j(x)=\frac{1}{N}(x+d_j)$ with $N\in \N$ and $d_j\in \Z$.
\end{rem}
\section{Absolute continuity of a class of self-affine measures}
\label{S-6}

In this section we consider Question \ref{ques-2'}.
Let $A$ be a $d\times d$ integral expanding matrix and let $\{S_j\}_{j=1}^m$ be a family of affine maps on $\R^d$ given by
$$
S_j(x)=A^{-1}(x+d_j), \;\quad  j=1,\ldots, m,
$$
with $d_j\in {\Bbb Z}^d$. Let $K$ be the self-affine set generated by $\{S_j\}_{j=1}^m$ (cf.~\cite{Falconer2003}). Given a probability weight $\{p_j\}_{j=1}^m$, let $\mu$ be the self-affine measure generated by $\{S_j\}_{j=1}^m$ and $\{p_j\}_{j=1}^m$. That is, $\mu$ is the unique Borel probability measure on $\R^d$ such that
\begin{equation}
\label{e-affine}
\mu=\sum_{j=1}^m p_j\mu \circ S_j^{-1}.
\end{equation}
It is known that $\mu$ is supported on $K$. Similar to the self-similar case, $\mu$ is  either purely singular, or absolutely continuous with respect to the Lebesgue measure ${\mathfrak L}^d$ on $\R^d$.  Moreover if  $\mu\ll {\mathfrak L}^d$, then $\mu$ and ${\mathfrak L}^d|_K$ are equivalent (see \cite[Proposition~4.1(2)] {BarralFeng2013} and \cite[Proposition~22(3)]{Shmerkin2006}).

In this section we consider the problem of deciding whether $\mu$ is absolutely continuous. First let us recall a known criterion for this decision problem by using the approach of Fourier analysis.
For $\xi\in \R^d$, let
$$
\widehat{\mu}(\xi)=\int e^{-2\pi i \langle \xi,\; x \rangle\ } \; d\mu(x)
$$
be the Fourier transform of $\mu$, where $\langle\cdot,\cdot\rangle$ represents the standard inner product in $\R^d$.  By the self-affine property \eqref{e-affine}, one has $\widehat{\mu}(\xi)=\widehat{\mu}(\tilde{A}^{-1}\xi)P(\tilde{A}^{-1}\xi)$, where $\tilde{A}=A^{\T}$ and
\begin{equation}
\label{e-mask}
P(\xi):=\sum_{j=1}^m p_je^{-2\pi i\langle \xi,\; d_j \rangle}.
\end{equation}
It follows that
$$
\widehat{\mu}(\xi)=\prod_{n=1}^\infty P(\tilde{A}^{-n}\xi).
$$

The following result is known to the experts in the areas of self-affine tilings and wavelet theory.
\begin{pro}
\label{pro-self-affine}
 The following statements are equivalent:
\begin{itemize}
\item[(i)] $\mu$ is absolutely continuous with respect to ${\mathfrak L}^d$.
\item[(ii)] $\widehat{\mu}({\bf m})=0$ for any ${\bf m}\in \Z^d\setminus \{{\bf 0}\}$.
\item[(iii)] For any ${\bf m}\in \Z^d\setminus \{{\bf 0}\}$, there exists $n\in \N$ such that $P(\tilde{A}^{-n}{\bf m})=0$.
\item[(iv)] $\overline{\mu}$ is the Haar measure on $\R^d/\Z^d$, where $\overline{\mu}$ stands for  the push forward of $\mu$ under the canonical projection $\pi: \;\R^d\to \R^d/\Z^d$, i.e., $\overline{\mu}=\mu\circ \pi^{-1}$.
\end{itemize}
\end{pro}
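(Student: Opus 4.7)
I plan to establish the four-way equivalence via the implications $\text{(ii)}\Leftrightarrow\text{(iv)}$, $\text{(ii)}\Leftrightarrow\text{(iii)}$, and $\text{(i)}\Leftrightarrow\text{(iv)}$.

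For $\text{(ii)}\Leftrightarrow\text{(iv)}$, I will note that $\widehat\mu(\mathbf{m})$ is precisely the $\mathbf{m}$-th Fourier coefficient of the pushforward $\overline\mu$ on the compact group $\R^d/\Z^d$, since $\int e^{-2\pi i\langle\mathbf{m},y\rangle}d\overline\mu(y)=\int e^{-2\pi i\langle\mathbf{m},x\rangle}d\mu(x)$ for $\mathbf{m}\in\Z^d$. The Haar measure is characterized among probability measures on $\R^d/\Z^d$ by having all non-zero Fourier coefficients equal to $0$, so $\text{(iv)}$ and $\text{(ii)}$ are equivalent.

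For $\text{(ii)}\Leftrightarrow\text{(iii)}$, the product formula $\widehat\mu(\xi)=\prod_{n=1}^\infty P(\tilde A^{-n}\xi)$ combined with $|P|\le 1$ makes $\text{(iii)}\Rightarrow\text{(ii)}$ immediate. For the converse, I will use that $P(0)=1$ together with $|P|\le 1$ forces $|P|^2$ to attain its maximum at $0$, so $\nabla|P|^2(0)={\bf 0}$; hence $1-|P(\xi)|^2=O(|\xi|^2)$ near $0$. Combined with an estimate of the form $|\tilde A^{-n}\mathbf{m}|\le C\lambda^{-n}|\mathbf{m}|$ for some $\lambda>1$ (coming from the expansiveness of $\tilde A$), the tail sum $\sum_{n\ge N}\log|P(\tilde A^{-n}\mathbf{m})|$ is bounded below by $-C'\lambda^{-2N}|\mathbf{m}|^2$, so the infinite product is non-zero whenever no individual factor vanishes.

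For $\text{(i)}\Leftrightarrow\text{(iv)}$ the decisive observation is that for $\mathbf{m}\in\Z^d$ and $j\ge 0$,
\[
P(\tilde A^j\mathbf{m})=\sum_{\ell=1}^m p_\ell e^{-2\pi i\langle\mathbf{m},\,A^j d_\ell\rangle}=1,
\]
since $A$ is an integer matrix and $d_\ell\in\Z^d$, forcing $\langle\mathbf{m},A^j d_\ell\rangle\in\Z$. Telescoping in the product formula then gives $\widehat\mu(\tilde A^N\mathbf{m})=\widehat\mu(\mathbf{m})$ for all $N\ge 0$. For $\text{(i)}\Rightarrow\text{(ii)}$: if $\mu\ll\mathfrak L^d$, then $\mu$ has an $L^1$ density, so by the Riemann-Lebesgue lemma $\widehat\mu(\xi)\to 0$ as $|\xi|\to\infty$; since $|\tilde A^N\mathbf{m}|\to\infty$ for $\mathbf{m}\ne{\bf 0}$, we conclude $\widehat\mu(\mathbf{m})=0$. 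For $\text{(iv)}\Rightarrow\text{(i)}$: given an $\mathfrak L^d$-null set $N\subset\R^d$, set $E=\bigcup_{\mathbf{n}\in\Z^d}\bigl(N\cap([0,1)^d+\mathbf{n})-\mathbf{n}\bigr)\subset[0,1)^d$, a countable union of $\mathfrak L^d$-null sets, hence $\mathfrak L^d$-null; since $N\subset\pi^{-1}(E)$, we get $\mu(N)\le\overline\mu(E)=\mathfrak L^d(E)=0$.

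The most delicate step is the infinite-product analysis in $\text{(ii)}\Rightarrow\text{(iii)}$: the quadratic-vanishing estimate for $1-|P|^2$ at the origin ultimately rests on the probability-vector condition $\sum_j p_j=1$, which is precisely what makes $|P|^2$ critical at $0$.
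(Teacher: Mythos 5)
Your argument is correct and is essentially the same Fourier-analytic approach that the paper delegates to the cited Lagarias--Wang reference: the identification of $\widehat\mu(\mathbf m)$ with the Fourier coefficients of $\overline\mu$, the telescoping identity $\widehat\mu(\tilde A^N\mathbf m)=\widehat\mu(\mathbf m)$ (using $P(\tilde A^j\mathbf m)=1$ for integer $\mathbf m$), Riemann--Lebesgue for $\text{(i)}\Rightarrow\text{(ii)}$, the quadratic-vanishing estimate on $1-|P|^2$ for $\text{(ii)}\Rightarrow\text{(iii)}$, and the pullback of null sets for $\text{(iv)}\Rightarrow\text{(i)}$ are exactly the ingredients of that argument. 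You have simply supplied the details the paper omits by citing \cite{LagariasWang1996}.
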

 \begin{proof}
 It follows from the proof of \cite[Theorem 2.1]{LagariasWang1996} with minor modifications.
 \end{proof}

 \begin{rem}
 \label{rem-6.2}
 For the case $d=1$, Protasov \cite{Protasov2000} provided an efficient algorithm to decide whether (iii) of Proposition \ref{pro-self-affine} is fulfilled, and hence to decide whether $\mu$ is absolutely continuous.  This algorithm is essentially based on the fact that in the case $d=1$, the mask function $P$ defined in \eqref{e-mask} has at most finitely many (rational) zero points lying in $[0,1)$. In the higher dimensional case, since $P$ may have infinitely many (rational) zero points in $\R^d/\Z^d$, it is unlikely that Protasov's algorithm is still efficient.
\end{rem}

In this section, we will provide an algorithm to decide the absolute continuity of $\mu$ in the general high dimensional case.  Our starting point  is  the work of Deng, He and Lau \cite{DengHeLau2008} on the structure of $\mu$.

In \cite{DengHeLau2008}, the authors constructed a $\Z^d$-tile $T\subset \R^d$, which is the attractor of certain affine iterated function system $\{\psi_i(x)=A^{-n_0}(x+c_i)\}_{i=1}^\ell$, with $n_0\in \N$, $\ell=|\det(A)|^{n_0}$ and $c_i\in \Z^d$, such that
\begin{equation}
\label{e-mu-bdd}
\mu(\partial T+{e})=0\quad  \mbox{ for all }{e}\in \Z^d,
\end{equation}
where $\partial T$ stands for the boundary of $T$. 
Set $${\mathcal E}=\{{e}_1,\ldots, {e}_N\}=\{{e}\in \Z^d:\; K\cap (\mbox{int}(T)+{e})\neq \emptyset\}$$ and define the vector-valued measure $\pmb{\mu}$ on $T$  by
$$
\pmb{\mu}(E)=[\mu((E\cap T)+{e}_1), \ldots, \mu((E\cap T)+{e}_N)]^\T.
$$
For $J=j_1\cdots j_{n_0}\in \{1,\ldots, m\}^{n_0}$, set $p_J=p_{j_1}\cdots p_{j_{n_0}}$ and $d_J=\sum_{k=1}^{n_0} A^{n_0-k}d_{j_k}$. Define a tuple ${\bf M}=(M_1,\ldots, M_\ell)$ of $N\times N$ non-negative matrices by
$$
(M_k)_{i,j}=\left\{\begin{array}{cc} p_J & \mbox{ if }\; c_k+A^{n_0} e_i-e_j=d_J \;\mbox{ for some }J\in \{1,\ldots, m\}^{n_0},\\
0 & \mbox{ otherwise},
\end{array}
\right.
$$
where $1\leq k\leq \ell$, and $1\leq i,j\leq N$.   The following theorem is our starting point.

\begin{thm}  [{\cite[Theorems 1.1-1.2]{DengHeLau2008}}]
\label{thm-DHL}
 \begin{itemize}
\item[(i)]The tuple ${\bf M}$ is positively irreducible.
\item[(ii)] $\sum_{i=1}^\ell M_i$ is Markov, i.e., all its column sums are equal to $1$.
\item[(iii)] For any $I=i_1\ldots i_n\in \{1,\ldots, \ell\}^n$,
$$
\pmb{\mu}(\psi_{I}(T))=M_I\pmb{\mu}(T),
$$
where $\psi_I:=\psi_{i_1}\circ \cdots \circ \psi_{i_n}$ and $M_I:=M_{i_1}\cdots M_{i_n}$.
\item[(iv)]${\mathfrak{L}}^d(K)>0$ if and only if $M_I\neq {\bf 0}$ for every finite word $I$ on $\{1,\ldots, \ell\}$.
\end{itemize}
\end{thm}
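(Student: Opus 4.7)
The plan is to prove the four assertions in the order (iii), (ii), (i), (iv), since (iii) is the structural identity from which the rest cascade. Throughout I fix $n_0$, the tile $T$, the digits $\{c_i\}_{i=1}^\ell$, and the translate set $\mathcal E=\{e_1,\ldots,e_N\}$ as in the setup.

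For part (iii), I would first observe that iterating \eqref{e-affine} exactly $n_0 n$ times (with $n=|I|$) yields $\mu=\sum_{J\in\{1,\ldots,m\}^{n_0 n}} p_J\,\mu\circ S_J^{-1}$, with $S_J(x)=A^{-n_0 n}(x+d_{J,n})$ and $d_{J,n}=\sum_{k=1}^{n_0 n}A^{n_0 n-k}d_{j_k}\in\Z^d$. On the other hand, iterating the $\psi$-system yields $\psi_I(T)=A^{-n_0 n}(T+c_{I,n})$ for a lattice vector $c_{I,n}$ determined by $I=i_1\cdots i_n$. Hence, for each $j$,
\[
\mu(\psi_I(T)+e_j)=\sum_{J} p_J\,\mu\bigl(T+c_{I,n}+A^{n_0 n}e_j-d_{J,n}\bigr).
\]
Because $T$ is a $\Z^d$-tile and \eqref{e-mu-bdd} gives zero mass to boundaries, only translates $T+e_i$ with $e_i\in\mathcal E$ contribute, and for each $J$ there is at most one $e_i$ making the translate non-trivial. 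Grouping the sum over $J$ according to the resulting $e_i$ precisely reconstructs the matrix product definition of $(M_I)_{j,i}$, so $\mu(\psi_I(T)+e_j)=\sum_i(M_I)_{j,i}\,\mu(T+e_i)$, which is (iii). The most delicate point here is verifying that the only terms contributing are indexed by $e_i\in\mathcal E$; this uses $\mathrm{supp}(\mu)\subseteq K$ and the definition of $\mathcal E$ via $K\cap(\mathrm{int}(T)+e)\ne\emptyset$.

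For part (ii), I would give a direct combinatorial argument from the definition of $M_k$: fixing a column index $j$ and summing over $k$ and $i$, I count triples $(k,i,J)$ with $c_k+A^{n_0}e_i-e_j=d_J$. Since $\{c_1,\ldots,c_\ell\}$ is a complete set of coset representatives of $\Z^d/A^{n_0}\Z^d$ (this is the tiling property of $T$), for every $J\in\{1,\ldots,m\}^{n_0}$ there is exactly one $k$ making $d_J+e_j-c_k\in A^{n_0}\Z^d$, and then $e_i$ is uniquely determined. The key (slightly subtle) point is that this $e_i$ actually belongs to $\mathcal E$ — this follows because $e_j\in\mathcal E$ means $K\cap(\mathrm{int}(T)+e_j)\ne\emptyset$, and pulling back through the $n_0$-step IFS produces a corresponding non-empty intersection for $T+e_i$. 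Thus the $(j,j)$ column sum equals $\sum_J p_J=1$.

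For part (i), positive irreducibility of $\mathbf M$ means that for every pair $i,i'$ there is a finite word $I$ with $(M_I)_{i,i'}>0$. By (iii), $(M_I)_{i,i'}>0$ is equivalent to $\mu\bigl((\psi_I(T)+e_{i'})\cap(T+e_i)\bigr)>0$. I would deduce this from the definition of $\mathcal E$ together with a connectedness/chaining argument: because $K$ is the attractor, each $\mathrm{int}(T)+e_i$ contains a point of $K$ with positive $\mu$-mass in every neighbourhood, and by iterating the IFS one can move a small neighbourhood inside $T+e_i$ to one inside $T+e_{i'}$ through a chain of overlapping $\psi_I(T)+(\text{translate})$ blocks. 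The chain realises the required positive matrix entry.

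For part (iv), the ``only if'' direction is immediate from (iii): if $M_I=\mathbf 0$ for some $I$, then $\pmb\mu(\psi_I(T))=\mathbf 0$, so $\mu$ assigns zero mass to every translate $\psi_I(T)+e$, $e\in\mathcal E$; iterating this, the self-affine measure $\mu$ becomes concentrated on a set of arbitrarily small Lebesgue density near the corresponding sub-pieces of $K$, forcing $\mu\perp\mathfrak L^d$ and hence, via the dichotomy \cite{BarralFeng2013,Shmerkin2006}, $\mathfrak L^d(K)=0$. Conversely, if $M_I\ne\mathbf 0$ for every $I$, then at each stage the vector $\pmb\mu$ distributes positive mass across every level-$n$ cylinder $\psi_I(T)$, so $\mu$ gives positive measure to every $\psi_I(T)+e_i$ that is non-trivially indexed; combined with the Markov property (ii), this forces the density of $\mu$ with respect to $\mathfrak L^d$ on $T$ to be bounded away from zero, yielding $\mathfrak L^d(K)>0$. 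The main obstacle I anticipate is making the chaining argument for (i) rigorous — in particular, justifying that the sub-measure on $\mathrm{int}(T)+e_i$ can always be transported to any other $\mathrm{int}(T)+e_{i'}$ via a finite word, which is where the geometry of the tile boundary and the hypothesis that $\mathcal E$ indexes all non-trivial intersections with $K$ really do the work.
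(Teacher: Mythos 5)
The statement labelled Theorem~\ref{thm-DHL} is not proved in this paper: it is quoted directly from Deng, He and Lau \cite[Theorems~1.1--1.2]{DengHeLau2008} and used as a black box, so there is no proof here to compare against. Evaluating your sketch on its own terms, the serious problem is in part~(iv). For the ``if'' direction you claim that $M_I\neq\mathbf{0}$ for all $I$, combined with the Markov property~(ii), ``forces the density of $\mu$ with respect to $\mathfrak{L}^d$ on $T$ to be bounded away from zero.'' That would give $\mu\ll\mathfrak{L}^d$, which is much stronger than $\mathfrak{L}^d(K)>0$ and is simply false: take $d=1$, $A=2$, $d_1=0$, $d_2=1$ (so $K=[0,1]$, $T=[0,1]$, $\mathcal{E}=\{0\}$, $N=1$, and $M_k=(p_k)$ is $1\times 1$) with unequal weights $p_1\neq p_2$. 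Every $M_I=(p_{i_1}\cdots p_{i_n})$ is nonzero and $\sum M_k=(1)$ is Markov, yet $\mu$ is a singular Bernoulli-type measure and the ratio $\mu(\psi_I(T))/\mathfrak{L}^1(\psi_I(T))=(2p_{i_1})\cdots(2p_{i_n})$ tends both to $0$ and to $\infty$ along different words $I$. The correct conclusion in~(iv) is about the Lebesgue measure of the attractor $K$, not about a density estimate for $\mu$; it must come from a covering argument concerning which level-$n$ cells $K$ meets. Indeed the whole point of the paper's Theorem~\ref{thm-6.4} is that even when $\mathfrak{L}^d(K)>0$, absolute continuity of $\mu$ requires the additional uniform Lyapunov exponent condition.

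The ``only if'' direction of~(iv) also has a gap: you go from ``$\mu\perp\mathfrak{L}^d$'' to ``$\mathfrak{L}^d(K)=0$,'' but singularity of $\mu$ does not make $K$ Lebesgue-null (again the Bernoulli example shows this). What actually works is that $\mu$ has full support on $K$, so $\mu(\psi_I(T)+e_j)=0$ for all $j$ forces $K\cap\mathrm{int}(\psi_I(T)+e_j)=\emptyset$; repeating this at every scale, $K$ avoids a fixed positive proportion of cells inside every cell, and the Lebesgue density theorem then gives $\mathfrak{L}^d(K)=0$. Your outline of~(iii) looks right modulo the bookkeeping you already flagged, and~(ii) is plausible, but the chaining argument for~(i) is too vague to assess and would need to be replaced by the explicit $\mathcal{E}$-transition argument of \cite{DengHeLau2008}.
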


\begin{rem}
\label{rem-L}
 Some equivalent conditions for $\mu$ to be absolutely continuous were given in {\cite[Proposition 3.8]{DengHeLau2008}}
in terms of
 joint spectral radius of matrix products. However, such conditions on  the joint spectral radius are undecidable in general (see \cite{BlondelTsitsiklis2000b}). One may see \cite{JiaLauZhou2001, LagariasWang1995, HareMorrisSidorovTheys2011} for some related works on the $L^1$-solutions of scaling equations and the joint spectral radius of matrix products.
\end{rem}

We say that the tuple ${\bf M}$ has a {\it uniform Lyapunov exponent} if there exists $\lambda\in \R$ such that $\|M_I\|\approx e^{\lambda n}$ for $n\in \N$ and $I\in  \{1,\ldots, \ell\}^n$.  One of the main results of this section is the following.
\begin{thm}
\label{thm-6.4}
The following statements are equivalent:
\begin{itemize}
\item[(i)] $\mu$ is absolutely continuous.
\item[(ii)]   ${\bf M}$ has a uniform Lyapunov exponent.
\end{itemize}
\end{thm}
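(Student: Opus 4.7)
The strategy is to exploit Proposition~\ref{pro-self-affine}, which equates (i) with $\overline{\mu}$ being Haar measure on $\R^d/\Z^d$, together with the recursion $\pmb{\mu}(\psi_I(T))=M_I\pmb{c}$ from Theorem~\ref{thm-DHL}(iii), where $\pmb{c}:=\pmb{\mu}(T)$. If $\mathfrak{L}^d(K)=0$, Theorem~\ref{thm-DHL}(iv) produces some $M_I=\mathbf{0}$ and $\mu$ cannot be absolutely continuous, so both sides fail; I therefore assume every $M_I\neq\mathbf{0}$, in which case $Y_{\bf M}$ is the full shift with $h_{\rm top}(Y_{\bf M})=\log\ell$, and column stochasticity of $\sum_k M_k$ (Theorem~\ref{thm-DHL}(ii)) gives $P({\bf M})=\log\rho(\sum_k M_k)=0$, so $r({\bf M})=1/\ell$ and (ii) becomes equivalent to $\|M_I\|\approx\ell^{-|I|}$. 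A pivotal preliminary is that every $c_j=\mu(T+e_j)$ is strictly positive: summing the recursion over $|I|=n$ and using \eqref{e-mu-bdd} to discard boundary overlaps yields $\pmb{c}=(\sum_k M_k)^n\pmb{c}$ for every $n$, so $\pmb{c}$ is a right eigenvector of the Perron-irreducible matrix $\sum_k M_k$ for its Perron eigenvalue $1$, forcing $\pmb{c}>\mathbf{0}$. With the paper's norm $\|M\|=\sum_{i,j}|M_{ij}|$, non-negativity of $M_I$ combined with $\pmb{c}>\mathbf{0}$ then gives the uniform comparison $\mathbf{1}^\T M_I\pmb{c}\approx\mathbf{1}^\T M_I\mathbf{1}=\|M_I\|$.

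For (i)$\Rightarrow$(ii), Proposition~\ref{pro-self-affine} makes $\overline{\mu}=\text{Haar}$, so $\overline{\mu}(\pi(\psi_I(T)))=\ell^{-|I|}$; since $\mu(T+e)=0$ whenever $e\notin\mathcal{E}$ (by \eqref{e-mu-bdd}), this Haar mass equals $\sum_i\mu(\psi_I(T)+e_i)=\mathbf{1}^\T M_I\pmb{c}$, and the comparison above upgrades this equality to $\|M_I\|\approx\ell^{-|I|}$. Conversely, assuming (ii), the same identities give $\overline{\mu}(\pi(\psi_I(T)))\approx\text{Haar}(\pi(\psi_I(T)))$ uniformly in $I$. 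For $n$ sufficiently large the cells $\{\pi(\psi_I(T))\}_{|I|=n}$ form an essentially disjoint refining partition of $\mathbb{T}^d$ with diameters tending to $0$, so Lebesgue differentiation produces a Radon--Nikodym density of $\overline{\mu}$ with respect to Haar bounded between two positive constants.

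To upgrade absolute continuity to the exact equality $\overline{\mu}=\text{Haar}$ demanded by Proposition~\ref{pro-self-affine}, I would use that $\overline{\mu}$ is invariant under the expanding toral endomorphism induced by $A$: this comes from the self-affine relation, since $A\circ S_j$ is translation by $d_j\in\Z^d$ and therefore acts trivially on $\mathbb{T}^d$, which yields $A_*\overline{\mu}=\sum_j p_j\overline{\mu}=\overline{\mu}$. The equivariance $\widehat{\overline{\mu}}(m)=\widehat{\overline{\mu}}((A^\T)^n m)$ for every $n\geq 0$, combined with the Riemann--Lebesgue lemma (now applicable since $\overline{\mu}$ is absolutely continuous) and the expansion $|(A^\T)^n m|\to\infty$ for each nonzero $m\in\Z^d$, forces all nontrivial Fourier coefficients of $\overline{\mu}$ to vanish; hence $\overline{\mu}=\text{Haar}$, and Proposition~\ref{pro-self-affine} returns $\mu\ll\mathfrak{L}^d$. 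I expect this last step to be the main obstacle: the matrix recursion alone only delivers comparability of $\overline{\mu}$ and Haar on basic cells, and eliminating nontrivial absolutely continuous $A$-invariant measures genuinely requires either the Fourier--dynamics input above or an appeal to exactness of the expanding endomorphism.
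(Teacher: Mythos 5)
Your argument tracks the paper's identity $\overline{\mu}(\psi_I(T')) = (1,\ldots,1) M_I \pmb{\mu}(T) \approx \|M_I\|$ and the direction (i)$\Rightarrow$(ii) is the paper's argument verbatim (Haar forces $\overline{\mu}(\psi_I(T'))=\ell^{-|I|}$, hence $\|M_I\|\approx\ell^{-|I|}$). Your positivity of $\pmb{c}=\pmb{\mu}(T)$ is obtained by an elegant but circuitous Perron--Frobenius fixed-point argument; the paper relies on the simpler observation that $\mathcal{E}$ is defined precisely as the set of $e$ with $K\cap(\mathrm{int}(T)+e)\neq\emptyset$, which by $K=\mathrm{supp}(\mu)$ immediately gives $\mu(T+e_j)>0$ for every $j$. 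The real divergence is in (ii)$\Rightarrow$(i): what you flag as ``the main obstacle'' is a detour you have built yourself. Once you know $\overline{\mu}\ll$ Haar (which your differentiation/martingale step correctly extracts from $\overline{\mu}(\psi_I(T'))\approx\mathfrak{L}^d(\psi_I(T'))$), you can conclude $\mu\ll\mathfrak{L}^d$ directly: if $\mathfrak{L}^d(E)=0$ then, writing $E=\bigsqcup_e E\cap(T'+e)$ and using that $\pi|_{T'+e}$ is a Lebesgue--Haar preserving bijection onto $\mathbb{T}^d$, each $\pi(E\cap(T'+e))$ is Haar-null, hence $\overline{\mu}$-null, hence $\mu(E\cap(T'+e))=0$ for every $e$; summing over the finitely many $e\in\mathcal{E}$ gives $\mu(E)=0$. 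The paper uses exactly this lift. Your Fourier--dynamics step (the $\tilde{A}$-invariance of $\overline{\mu}$, Riemann--Lebesgue, and $|(A^\T)^n m|\to\infty$) is in fact a correct proof that $\overline{\mu}=$ Haar, as is the appeal to exactness, but neither is needed: Proposition~\ref{pro-self-affine} is used only in the forward direction, and for the converse absolute continuity of the pushforward already suffices.
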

\begin{proof}
Since $T$ is a self-affine $\Z^d$-tile of $\R^d$, there exists a Borel set $T'\subset T$ such that  $\mbox{int}(T')=\mbox{int}(T)$, and  $\R^d=\bigcup_{e\in \Z^d} (T'+e)$ with the union being disjoint; in other word, $T'$ is a fundamental domain of the torus $\R^d/\Z^d$. Let $\pi:\; \R^d\to \R^d/\Z^d$  be the canonical projection and $\overline{\mu}=\mu\circ\pi^{-1}$.

By \eqref{e-mu-bdd} and \eqref{e-affine}, one can derive that  $\mu(\psi_I(\partial T)+e)=0$ and hence $\mu(\psi_I(T)+e)=\mu(\psi_I(T')+e)$ for any $e\in \Z^d$ and any finite word $I$ on the alphabet $\{1,\ldots, \ell\}$. Combining it with Theorem \ref{thm-DHL}(ii) yields
\begin{equation}
\label{e-MI}
\overline{\mu}(\psi_I(T'))=\overline{\mu}(\psi_I(T))=(1,1,\ldots,1)M_I {\pmb \mu}(T)\approx \|M_I\|.
\end{equation}

Suppose that $\mu$ is absolutely continuous, by Proposition \ref{pro-self-affine}, $\overline{\mu}$ is the Haar measure on $\R^d/\Z^d$. Then $\overline{\mu}(\psi_I(T'))=\ell^{-|I|}$ and thus by \eqref{e-MI}, $\|M_I\|\approx \ell^{-|I|}$. Hence ${\bf M}$ has a uniform Lyapunov exponent.

Next suppose that $M$ has a uniform Lyapunov exponent. Then by \eqref{e-MI}, we have  $$\overline{\mu}(\psi_I(T'))\approx \overline{\mu}(\psi_J(T'))$$ for $n\in \N$ and $I, J\in \{1,\ldots, \ell\}^n$. It follows that $\overline{\mu}(\psi_I(T'))\approx \ell^{-|I|}={\mathfrak{L}}^d(\psi_I(T'))$, which implies that $\overline{\mu}$ is absolutely continuous with respect to the Haar measure on $\R^d/\Z^d$. Hence $\mu$ is absolutely continuous with respect to ${\mathfrak L}^d$.
\end{proof}

In the remaining part of this section, we prove the following additional property of $\mu$.
\begin{thm}
\label{thm-cts} $\mu$ is absolutely continuous if $\dim_H\mu=d$.
\end{thm}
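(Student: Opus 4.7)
By Theorem~\ref{thm-6.4} it suffices to show that $\bM=(M_1,\ldots,M_\ell)$ has a uniform Lyapunov exponent. Theorem~\ref{thm-DHL}(ii) gives $\rho\bigl(\sum_i M_i\bigr)=1$, so $P(\bM,1)=0$, and Theorem~\ref{thm-matrix} furnishes a unique ergodic equilibrium state $\nu$ on $\Omega=\{1,\ldots,\ell\}^\N$ with $\nu([I])\approx\|M_I\|$. Consequently the condition $\|M_I\|\approx\ell^{-|I|}$ is equivalent to $\nu$ being the Parry measure on the full shift $\Omega$, which in turn is the entropy equality $h_\nu(\sigma)=\log\ell$. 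The entire task is therefore to deduce this equality from the hypothesis $\dim_H\mu=d$.

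Passing to the geometric side, let $\Phi:\Omega\to T$ be the coding map of the IFS $\{\psi_i\}_{i=1}^\ell$ and set $\widetilde\mu=\nu\circ\Phi^{-1}$. By Theorem~\ref{thm-DHL}(iii) together with \eqref{e-MI},
\[
\widetilde\mu(\psi_I(T))=\nu([I])\approx\|M_I\|\approx\overline\mu(\psi_I(T)),
\]
so $\widetilde\mu$ and $\overline\mu$ (both viewed as measures on $T$) are mutually absolutely continuous with densities bounded above and below. Since the canonical projection $\R^d\to\R^d/\Z^d$ is a local isometry, $\dim_H\widetilde\mu=\dim_H\overline\mu=\dim_H\mu=d$. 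The map $\Phi$ intertwines $\sigma$ with the expanding toral endomorphism $T_A^{n_0}$, so $\widetilde\mu$ is a $T_A^{n_0}$-invariant probability measure, ergodic because $\nu$ is.

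The decisive step is to apply the Ledrappier-Young dimension formula to $\widetilde\mu$ and $T_A^{n_0}$. Writing $1<\lambda_1\leq\cdots\leq\lambda_d$ for the absolute values of the eigenvalues of $A$, the Lyapunov spectrum of $T_A^{n_0}$ is $\{n_0\log\lambda_i\}_{i=1}^d$, and the formula reads
\[
\dim_H\widetilde\mu=\sum_{i=1}^d\frac{\delta_i}{n_0\log\lambda_i},\quad 0\leq\delta_i\leq n_0\log\lambda_i,\quad \sum_{i=1}^d\delta_i=h_{\widetilde\mu}(T_A^{n_0}).
\]
Combined with $\dim_H\widetilde\mu=d$, the constraints $\delta_i/(n_0\log\lambda_i)\in[0,1]$ summing to $d$ force every quotient to equal $1$, so $h_{\widetilde\mu}(T_A^{n_0})=n_0\log|\det A|=\log\ell$. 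Thus $\widetilde\mu$ attains the topological entropy of $T_A^{n_0}$, and by uniqueness of the measure of maximal entropy for an expanding toral endomorphism it equals the Haar measure. Consequently $\nu([I])\approx\widetilde\mu(\psi_I(T))\approx\ell^{-|I|}$, delivering the uniform Lyapunov exponent, and Theorem~\ref{thm-6.4} yields $\mu\ll\mathfrak{L}^d$.

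The main obstacle is the non-conformality of $A$: if the eigenvalue moduli of $A$ differ, a naive pointwise-dimension estimate using only Euclidean balls (in the spirit of the proof of Theorem~\ref{thm-6.1}) gives at best $h_\nu(\sigma)\geq n_0 d\log\lambda_1$, strictly less than $\log\ell=n_0\sum_i\log\lambda_i$. The Ledrappier-Young formula is needed precisely because it treats each Oseledets direction separately and saturates the partial entropy along every direction when the ambient Hausdorff dimension is achieved.
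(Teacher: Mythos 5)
Your proof is correct in its essentials and takes a genuinely different route from the paper's. The paper's argument first applies the Feng--Hu (Ledrappier--Young type) dimension formula to $\mu$ itself on $\R^d$, with respect to the IFS $\{S_j\}$, to reduce $\dim_H\mu = d$ to the entropy identity $h_\mu^* = \log|\det A|$ (Lemma~\ref{lem-important}); it then invokes a separate bounded-overlap comparison of the partitions $\widetilde{\mathcal Q}_{kn_0}$ and $\widetilde{\mathcal P}_k$ to identify $h_\mu^*$ with $h_\xi(\sigma)/n_0$ (Lemma~\ref{lem-xi}), after which the Parry-measure characterization delivers the uniform Lyapunov exponent. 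You instead transfer everything to the torus: you push $\nu$ down to a $T_A^{n_0}$-invariant ergodic measure $\widetilde\mu$ boundedly equivalent to $\overline\mu$, apply a Ledrappier--Young formula for the expanding endomorphism to get $h_{\widetilde\mu}(T_A^{n_0})=\log\ell$, and then use uniqueness of the measure of maximal entropy for $T_A^{n_0}$ to conclude $\widetilde\mu$ is Haar. This bypasses Lemma~\ref{lem-xi} entirely and replaces the Parry-measure endgame with the MME-uniqueness endgame; the two are of course the same fact seen symbolically vs.\ geometrically. Two minor points worth tightening. First, the transfer $\dim_H\mu=d\Rightarrow\dim_H\overline\mu=d$ is correct but does not follow merely from $\pi$ being a local isometry: one should note that $\overline\mu$ is a finite sum of translates of restrictions $\mu|_{T'+e_j}$, each of which has $\dim_H\geq\dim_H\mu$ when nontrivial, and that $\dim_H$ of a finite sum of measures is the minimum of the individual dimensions. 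Second, the precise Ledrappier--Young statement you invoke for the non-conformal expanding endomorphism $T_A^{n_0}$ deserves a citation; the cleanest route is to observe that $\widetilde\mu$ is the invariant measure of the IFS $\{\psi_i\}_{i=1}^\ell$ and apply \cite[Theorem 2.11]{FengHu2009} directly, exactly as the paper does for $\mu$ in Lemma~\ref{lem-important} --- this makes the parallel with the paper's proof explicit and avoids any worry about whether the toral-endomorphism version of the formula is available off the shelf.
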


First we give an equivalent condition for $\dim_H\mu=d$ in terms of measure-theoretic entropies. Recall that for a probability measure $\eta$ on $\R^d$ and a finite or countable  Borel partition ${\mathcal P}=\{C_1,\ldots, C_k,\ldots\}$ of $\R^d$, the  entropy of $\eta$ with respect to $\mathcal P$ is defined by
$$
H_\eta(\mathcal P)=-\sum_{k=1}^\infty \eta(C_k)\log \eta(C_k).
$$

\begin{lem}
\label{lem-important}
Let ${\mathcal Q}$ denote the partition $\{[0,1)^d+\alpha:\; \alpha\in \Z^d\}$. Set  ${\mathcal Q}_n:=A^{-n}Q$ for $n\in \N$. Then the limit
$$
h_\mu^*:=\lim_{n\to \infty}\frac{H_\mu({\mathcal Q}_n)}{n}
$$
exists. Furthermore   $\dim_H\mu=d$ if and only if  $h_\mu^*=\log |\det(A)|$.
\end{lem}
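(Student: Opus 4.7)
The plan is to push the problem to the torus $\mathbb{T}^d=\R^d/\Z^d$, on which the integer expanding matrix $A$ induces a $|\det A|$-to-$1$ covering endomorphism $T_A$. Set $\pi:\R^d\to \mathbb{T}^d$ for the canonical projection and $\overline{\mu}:=\mu\circ\pi^{-1}$. First I would verify that $\overline{\mu}$ is $T_A$-invariant: rearranging the functional equation preceding Proposition~\ref{pro-self-affine} gives $\widehat{\mu}(\widetilde{A}\xi)=\widehat{\mu}(\xi)P(\xi)$, and since $P(\xi)=\sum_j p_j e^{-2\pi i\langle\xi,d_j\rangle}=1$ for every $\xi\in\Z^d$ (because $d_j\in\Z^d$), one deduces $\widehat{\overline{\mu}}(\widetilde{A}\xi)=\widehat{\overline{\mu}}(\xi)$ on $\Z^d$. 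This is precisely the Fourier criterion for $\overline{\mu}\circ T_A^{-1}=\overline{\mu}$.

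Next, consider the image partition $\mathcal{R}_n:=\pi(\mathcal{Q}_n)$ of $\mathbb{T}^d$; it has exactly $|\det A|^n$ atoms (indexed by $A^{-n}\Z^d/\Z^d$), and one verifies that $\mathcal{R}_n=\bigvee_{k=0}^{n-1}T_A^{-k}\mathcal{R}_1$, with $\mathcal{R}_1$ generating for $T_A$ since the diameters of $\mathcal{R}_n$-atoms tend to $0$ as $A$ is expanding. Because $\mu$ is compactly supported in $\R^d$, only a bounded number $N$ (independent of $n$) of $\mathcal{Q}_n$-atoms with positive $\mu$-measure lie over any fixed $\mathcal{R}_n$-atom; a short conditional-entropy computation then gives $|H_\mu(\mathcal{Q}_n)-H_{\overline{\mu}}(\mathcal{R}_n)|\leq \log N$. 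The Kolmogorov--Sinai theorem, applied to the $T_A$-invariant measure $\overline{\mu}$, then yields
\[
\lim_{n\to\infty}\frac{H_{\overline{\mu}}(\mathcal{R}_n)}{n}=h_{\overline{\mu}}(T_A,\mathcal{R}_1)=h_{\overline{\mu}}(T_A),
\]
so $h_\mu^*$ exists and equals $h_{\overline{\mu}}(T_A)$.

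For the equivalence, the direction $(\Leftarrow)$ is straightforward: $h_\mu^*=\log|\det A|=h_{\mathrm{top}}(T_A)$ forces $\overline{\mu}$ to be the unique measure of maximal entropy of the expanding endomorphism $T_A$, which is Haar measure; Proposition~\ref{pro-self-affine}(iv) then gives $\mu\ll\mathfrak{L}^d$ and hence $\dim_H\mu=d$. The direction $(\Rightarrow)$ is where the main obstacle lies. Assuming $\dim_H\mu=d$, local injectivity of $\pi$ yields $\dim_H\overline{\mu}=d$. I would then invoke a Ledrappier--Young type dimension formula for $T_A$-invariant measures on $\mathbb{T}^d$: since the Lyapunov exponents $\chi_1,\ldots,\chi_d$ of the linear map $T_A$ are the constants $\log|\lambda_i|$ summing to $\log|\det A|$, the formula writes $h_{\overline{\mu}}(T_A)=\sum_i(\delta_i-\delta_{i-1})\chi_i$ with partial dimensions $\delta_i$ satisfying $\delta_i-\delta_{i-1}\in[0,1]$ and $\sum_i(\delta_i-\delta_{i-1})=\dim_H\overline{\mu}=d$. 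These constraints force $\delta_i-\delta_{i-1}=1$ for each $i$, giving $h_{\overline{\mu}}(T_A)=\sum_i\chi_i=\log|\det A|$. When $\overline{\mu}$ fails to be ergodic, one first decomposes into ergodic components and applies the formula on each, using exact dimensionality of the self-affine measure $\mu$ (Feng--Hu) to ensure that $\dim_H\overline{\mu}_\xi=d$ for almost every component.
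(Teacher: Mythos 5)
Your route through the torus is genuinely different from the paper's (the paper never leaves $\R^d$: it applies the Feng--Hu dimension formula directly to the self-affine measure $\mu$ and gets both implications from elementary entropy bounds such as $h_\mu^*\le \log|\det A|$, which is also why integrality of $A$ and the $d_j$ is not needed there). Several pieces of your plan are sound: the $T_A$-invariance of $\overline{\mu}$ via the Fourier identity, the bound $|H_\mu(\mathcal{Q}_n)-H_{\overline{\mu}}(\mathcal{R}_n)|\le \log N$ and hence the existence of $h_\mu^*=h_{\overline{\mu}}(T_A)$, and the direction $h_\mu^*=\log|\det A|\Rightarrow \dim_H\mu=d$ via uniqueness of the measure of maximal entropy for an expanding toral endomorphism together with Proposition~\ref{pro-self-affine}(iv). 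The problems are concentrated in the converse direction, which is the one the paper actually uses afterwards.

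Two gaps there. First, the ergodic-decomposition fallback is incorrect: $\dim_H\overline{\mu}=d$ does not give $\dim_H\overline{\mu}_\xi=d$ for a.e.\ component (the dimension of a measure is not inherited by its ergodic components --- e.g.\ Lebesgue measure on $\mathbb{T}^2$, invariant under an irrational rotation of one coordinate, has dimension $2$ while every ergodic component has dimension $1$), and the proposed patch via exact dimensionality of $\mu$ also fails, because $\overline{\mu}(B(\pi x,r))=\sum_{\alpha\in\Z^d}\mu(B(x+\alpha,r))$ and the other preimages $x+\alpha$ of a $\mu$-typical point $x$ need not be $\mu$-typical, so one cannot conclude that the lower local dimension of $\overline{\mu}$ is $\ge d$ almost everywhere. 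The correct observation, which removes this branch entirely, is that $\overline{\mu}$ is automatically $T_A$-ergodic: since $d_j\in\Z^d$, the attractor coding map followed by $\pi$ intertwines the shift with $T_A$, so $\overline{\mu}$ is a factor of a Bernoulli measure. Second, the "Ledrappier--Young type dimension formula for $T_A$-invariant measures on $\mathbb{T}^d$" is not an off-the-shelf citation in the generality you invoke it (non-conformal, possibly non-diagonalizable $A$, arbitrary invariant measure, including the exact-dimensionality statement that makes $\sum_i(\delta_i-\delta_{i-1})$ equal to $\dim_H\overline{\mu}$). To ground it you would have to lift $\overline{\mu}$ to a shift-invariant measure via the radix/digit coding of $T_A$ (or use the factor structure just mentioned, with entropy preserved under the finite-to-one coding) and then apply the Feng--Hu formula for projections of ergodic measures --- which is exactly the tool, and essentially the same diagonal-case restriction, under which the paper writes out its own proof. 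At that point the detour through the torus buys nothing for this implication; applying the formula to $\mu$ in $\R^d$, as the paper does, is both shorter and strictly more general.
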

\begin{proof}
The first result follows directly from \cite[Theorem 2.3(i)]{FengHu2009}. The second one can be derived  from a formula of $\dim_H\mu$ established in \cite[Theorem 2.11(ii)]{FengHu2009}. To avoid introducing too many terminologies, we simply give the proof for the special case  when
$d=2$ and $A={\rm diag}(a,b)$ with $1<a<b$. The proof for the general case is similar in spirit.

In this special case, the formula of $\dim_H\mu$  given in \cite{FengHu2009} can be rewritten as
\begin{equation}
\label{e-LY}
\dim_H\mu=\left(\frac{1}{\log a}-\frac{1}{\log b}\right)H_1+\frac{h_\mu^*}{\log b},
\end{equation}
where
$$
H_1=\lim_{n\to \infty} \frac{H_\nu({\mathcal D}_n)}{n},
$$
here  $\nu$ is the push-forward of $\mu$ under the projection $\tau:\R^2\to \R$ given by $(x,y)\mapsto x$, that is, $\nu=\mu\circ \tau^{-1}$; and ${\mathcal D}_n=a^{-n}{\mathcal D}$ with
\begin{equation}
\label{e-cald}
{\mathcal D}=\{[0,1)+\beta:\; \beta\in \Z\}.
\end{equation}
 Again the existence of the limit in defining $H_1$ follows from \cite[Theorem 2.3(i)]{FengHu2009}.

 Next we claim that
 \begin{equation}
 \label{e-HV}
 h_\mu^*\leq \log (ab)\quad \mbox{  and } \quad H_1\leq \log a.
  \end{equation}
 We only prove the first inequality, the second one follows by a similar argument.
 Notice that $\mu$ is supported on $K$ which is compact. Set $$\widetilde{{\mathcal Q}}_n=\{Q\in {\mathcal Q}_n:\; \mu(Q)>0\}.$$
Since any member of $\widetilde{{\mathcal Q}}_n$ intersects $K$ and has volume $(ab)^{-n}$,  a simple volume argument yields that
  $$
  \# \widetilde{{\mathcal Q}}_n\leq C (ab)^n
  $$
 for some constant $C>0$ depending on the diameter of $K$.  Hence
 $$H_\mu({\mathcal Q}_n)=H_\mu(\widetilde{{\mathcal Q}}_n)\leq \log \left(\# \widetilde{{\mathcal Q}}_n\right)\leq n\log (ab)+\log C,$$
 from which the inequality $h_\mu^*\leq \log (ab)$ follows.

 Now by \eqref{e-LY} and \eqref{e-HV},  we have
 $$
 \dim_H\mu\leq \left(\frac{1}{\log a}-\frac{1}{\log b}\right)\log a +\frac{\log(ab)}{\log b}=2,
 $$
 and hence the condition  $\dim_H\mu=2$ holds if and only if that  $h_\mu^*=\log (ab)$ and $H_1=\log a$.

 To complete the proof, we need to show that $h_\mu^*=\log (ab)$ implies that $H_1=\log a$. To see this implication, suppose $h_\mu^*=\log (ab)$. For $n\in \N$, define two partitions ${\mathcal E}_{n}$ and ${\mathcal F}_n$ of $\R^2$ by
 $$
 {\mathcal E}_{n}=\{E\times \R:\; E\in a^{-n} \mathcal D\},\qquad {\mathcal F}_{n}=\{\R\times F:\; F\in b^{-n} \mathcal D\},
 $$
 where ${\cal D}$ is given as in \eqref{e-cald}.
 It is direct to check that $$\mathcal Q_n={\mathcal E}_{n}\vee {\mathcal F}_{n}:=\{C\cap D: C\in {\mathcal E}_{n}, D\in {\mathcal F}_{n}\}.$$
  Hence we have  \begin{equation}
 \label{entropy}H_\mu(\mathcal Q_n)\leq H_\mu({\mathcal E}_{n})+H_\mu(\mathcal F_n).
 \end{equation} (cf.~\cite[Theorem 4.3]{Walters1982}). Noticing that $H_\mu({\mathcal E}_{n})=H_\nu({\mathcal D}_{n})$, we have $$\lim_{n\to \infty}\frac{1}{n}H_\mu({\mathcal E}_{n})=H_1\leq \log a;$$ similarly, we can prove $\lim_{n\to \infty}(1/n)H_\mu({\mathcal F}_{n})\leq \log b$. Thus by \eqref{entropy}, we have
 $$
 \log(ab)=\lim_{n\to \infty}\frac{H_\mu(\mathcal Q_n)}{n}\leq  \lim_{n\to \infty}\frac{H_\mu({\mathcal E}_{n})}{n}+\lim_{n\to \infty}\frac{H_\mu({\mathcal F}_{n})}{n}\leq \log a+\log b,
 $$
 from which  the equality  $H_1= \log a$ follows.
  \end{proof}

\begin{rem}
We emphasize that in Lemma \ref{lem-important}, the assumption that  $A$ and $d_j$'s are integral is not needed.
\end{rem}

Let $\sigma$ be the left shift map on $\Sigma:=\{1,\ldots, \ell\}^\N$.

\begin{lem}
\label{lem-xi}
Let $h_\mu^*$ be defined as in Lemma \ref{lem-important}. Then
$$
h_\mu^*=\frac{h_\xi(\sigma)}{n_0},
$$
where $\xi$ is the equilibrium state for $({\bf M},1)$.
\end{lem}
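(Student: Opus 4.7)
The plan is to realize the entropy $h_\mu^*$ as the asymptotic entropy of a tile partition associated with the IFS $\{\psi_i\}$ of Theorem \ref{thm-DHL}, and then to identify this quantity with the entropy of a shift-invariant probability measure on $\Sigma:=\{1,\ldots,\ell\}^{\N}$ that turns out to coincide with $\xi$. As an intermediate object I would introduce the partition $\mathcal P_k:=\{\psi_I(T)+e:I\in\{1,\ldots,\ell\}^k,\;e\in\Z^d\}$ of $\R^d$ (up to a Lebesgue-null set). Both $\mathcal P_k$ and $\mathcal Q_{n_0k}$ are images under $A^{-n_0k}$ of $\Z^d$-periodic tilings of $\R^d$ (by translates of $T$ and of $[0,1)^d$, respectively), so each cell of one meets only a bounded number of cells of the other, uniformly in $k$. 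Combining this with the standard estimate $|H_\mu(\mathcal P)-H_\mu(\mathcal Q)|\le \log r$ (valid whenever each cell of one partition meets at most $r$ cells of the other) yields $|H_\mu(\mathcal P_k)-H_\mu(\mathcal Q_{n_0k})|=O(1)$, and hence
\[
h_\mu^*=\lim_{k\to\infty}\frac{1}{n_0k}H_\mu(\mathcal P_k).
\]

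By \eqref{e-mu-bdd} and the definition of $\mathcal E$, the only $\mu$-charged cells of $\mathcal P_k$ are $\psi_I(T)+e_i$ with $1\le i\le N$; by Theorem \ref{thm-DHL}(iii), their masses are $\mu(\psi_I(T)+e_i)=[M_I\pmb{\mu}(T)]_i$. Setting $q_I:=\mathbf{1}^{\T}M_I\pmb{\mu}(T)$, a conditional-entropy decomposition with outer index $I$ and inner index $i$ (ranging over at most $N$ values) gives $\bigl|H_\mu(\mathcal P_k)+\sum_{|I|=k}q_I\log q_I\bigr|\le\log N$. The self-affine tile equation $T=\bigsqcup_j\psi_j(T)$ together with Theorem \ref{thm-DHL}(iii) shows that $\pmb{\mu}(T)$ is a right Perron eigenvector of $\sum_jM_j$ with eigenvalue $1$; by Theorem \ref{thm-DHL}(ii), $\mathbf{1}^{\T}$ is the corresponding left eigenvector. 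These two eigenvector identities ensure that $(q_I)$ sums to $1$ and is Kolmogorov-consistent as well as shift-invariant, so it extends uniquely to a $\sigma$-invariant Borel probability measure $\nu$ on $\Sigma$ with $\nu([I])=q_I$.

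To finish, I would identify $\nu$ with $\xi$. Since $\sum_jM_j$ is column-stochastic its spectral radius is $1$, so $P({\bf M},1)=0$, and $\xi$ is Gibbs with $\xi([I])\approx\|M_I\|$. Because $\pmb{\mu}(T)$ has strictly positive entries, we also have $\nu([I])=q_I\approx\|M_I\|$ uniformly in $I$. Feeding this bound into the (sub)additive formulas for $h_\nu(\sigma)$ and $\lambda({\bf M},\nu)$ produces $h_\nu(\sigma)+\lambda({\bf M},\nu)=0=P({\bf M},1)$, so $\nu$ is an equilibrium state for $({\bf M},1)$; by the uniqueness in Theorem \ref{thm-matrix} we conclude $\nu=\xi$. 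Passing to the limit $\frac{1}{k}\bigl(-\sum_{|I|=k}q_I\log q_I\bigr)\to h_\nu(\sigma)=h_\xi(\sigma)$ and combining with the first step yields the desired identity $h_\mu^*=h_\xi(\sigma)/n_0$. I expect the main obstacle to be the geometric comparison of $\mathcal P_k$ with $\mathcal Q_{n_0k}$ in the first paragraph, which requires a careful accounting of the boundary interaction between the two $\Z^d$-periodic tilings; the remaining steps are essentially consequences of the Gibbs property and the uniqueness of the equilibrium state.
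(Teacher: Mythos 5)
Your proposal is correct, and it reaches the same key estimates as the paper (the bounded-overlap comparison of tile partitions with the $\mathcal{Q}_{n_0k}$ partition, the identity $\mu(\psi_I(T)+e_i)=[M_I\pmb{\mu}(T)]_i$, and the Gibbs bound $q_I\approx\|M_I\|$), but the middle step is organized differently from the paper's proof.

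Where you diverge: the paper works directly with the scalar sequence $\overline{\mu}(\psi_I(T'))$ (which is exactly your $q_I=\mathbf{1}^{\T}M_I\pmb{\mu}(T)$) and proves
$\lim_n\frac{1}{n}\sum_{|I|=n}\bigl(-q_I\log q_I\bigr)=h_\xi(\sigma)$
by a Shannon--McMillan--Breiman argument: from $t^{-1}\xi([I])\le q_I\le t\,\xi([I])$ and the a priori lower bound $\|M_I\|\ge c^{|I|}$, it controls the set $\Omega_{n,\epsilon}$ on which the SMB convergence is slow and sums the error. You instead observe that the two eigenvector identities coming from $T=\bigsqcup_j\psi_j(T)$ (right eigenvector $\pmb{\mu}(T)$) and from Theorem \ref{thm-DHL}(ii) (left eigenvector $\mathbf{1}^{\T}$) make $(q_I)_I$ a shift-invariant, Kolmogorov-consistent cylinder function, hence a genuine $\sigma$-invariant probability measure $\nu$ on $\{1,\ldots,\ell\}^{\N}$; then the same limit is tautologically $h_\nu(\sigma)$, and $\nu=\xi$ follows from the Gibbs characterization (indeed, the uniqueness clause of Theorem \ref{thm-matrix} applies directly to the bound $\nu([I])\approx\|M_I\|$, so the detour through $h_\nu+\lambda(\mathbf{M},\nu)=P(\mathbf{M},1)$ and equilibrium-state uniqueness, while valid, is not even needed). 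This buys you a cleaner argument: the $\epsilon$-approximation bookkeeping in the paper's SMB step disappears, at the cost of verifying the two eigenvector identities and the consistency of $(q_I)_I$ (which you do correctly). A further minor difference: the paper compares $\mathcal{Q}_{n_0k}$ with the coarser partition $\widetilde{\mathcal P}_k=\{\bigcup_{i=1}^N(\psi_I(T')+e_i):I\}$, whose entropy equals $-\sum_Iq_I\log q_I$ on the nose, whereas your finer partition $\mathcal P_k$ needs the extra $\pm\log N$ conditional-entropy correction; both are fine. Your geometric claim that $\mathcal P_k$ and $\mathcal Q_{n_0k}$ have uniformly bounded mutual overlap is the same as the paper's appeal to \cite[Lemma 4.6]{FengHu2009}, and your justification (both are $A^{-n_0k}$-images of $\Z^d$-periodic tilings by bounded sets) is the right one.
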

\begin{proof}
Since $\sum_{i=1}^\ell M_i$ is Markov by Theorem \ref{thm-DHL}(ii), we have $\rho\left(\sum_{i=1}^\ell M_i\right)=1$ and hence $P({\bf M})=0$.  By Theorem \ref{thm-matrix}, $\xi$ is the unique ergodic invariant measure on $\Sigma$ so that
\begin{equation}
\label{e-xi}
\xi([I])\approx \|M_I\|,\quad I\in  \bigcup_{n=1}^\infty \{1,\ldots, \ell\}^n.
\end{equation}
Set $$c=\min\left\{\mbox{non-zero entries of } M_i:\; i=1,\ldots, \ell\right\}.$$
Clearly $c>0$ and
\begin{equation}
\label{e-MI-1}
\|M_I\|\geq c^{|I|} \quad \mbox{ for all $I$ with }M_I\neq {\bf 0}.
\end{equation}

Let $\pi:\; \R^d\to \R^d/\Z^d$  be the canonical projection and $\overline{\mu}=\mu\circ\pi^{-1}$. Let $T'\subset T$ be defined as in the proof of Theorem \ref{thm-6.4}. For $n\in \N$, set $\Sigma_n:=\{1,\ldots, \ell\}^n$. Then $\overline{\mu}$ is supported on $\bigcap_{n=1}^\infty\bigcup_{I\in \Sigma_n}\psi_I(T')$.
By \eqref{e-MI}, \eqref{e-xi} and \eqref{e-MI-1}, there exists a constant $t\geq 1$ such that
\begin{equation}
\label{e-et}
t^{-2}c^{|I|}\leq t^{-1}  \xi([I]) \leq \overline{\mu}(\psi_I(T'))\leq t \xi([I])\quad \mbox{ for  all }I\in  \bigcup_{n=1}^\infty \{1,\ldots, \ell\}^n.
\end{equation}

 Below, we  show that
\begin{equation}
\label{e-SMB}
\lim_{n\to \infty} \frac{1}{n}\sum_{I\in \Sigma_n}\left(-\overline{\mu}(\psi_I(T')) \log \overline{\mu}(\psi_I(T'))\right)=h_\xi(\sigma).
\end{equation}
To see this, by the Shannon-McMillan-Breiman theorem we obtain that, for any $\epsilon>0$, there exists $k(\epsilon)\in \N$ such that for all $n\geq k(\epsilon)$,
\begin{equation}
\label{e-estimate}
\sum_{I\in \Omega_{n,\epsilon}}\xi([I])<\epsilon,
\end{equation}
where
$$
\Omega_{n,\epsilon}:=\left\{I\in \Sigma_n:\; \left|\frac{\log \xi([I])}{(-n)}-h_\xi(\sigma)\right|>\epsilon\right\}.
$$
 By \eqref{e-et}
and the definition of $\Omega_{n,\epsilon}$, we have
$$
\left|\frac{\log \overline{\mu}(\psi_I(T'))}{(-n)}-h_\xi(\sigma) \right|\leq \left\{
\begin{array}{ll}
|\log c|+h_\xi(\sigma)+2 n^{-1}\log t &\mbox{ if }I\in \Sigma_{n},\\
\epsilon+n^{-1}\log t & \mbox{ if }I\in \Sigma_n\backslash \Omega_{n,\epsilon}.
\end{array}
\right.
$$
Hence
\begin{equation*}
\begin{split}
\sum_{I\in \Sigma_n}& \overline{\mu}(\psi_I(T'))
\left|\frac{\log \overline{\mu}(\psi_I(T'))}{(-n)}-h_\xi(\sigma) \right|\\
\leq & \left(\sum_{I\in \Omega_{n,\epsilon}} \overline{\mu}(\psi_I(T'))\right)\left(|\log c|+h_\xi(\sigma)+ \frac{2}{n}\log t\right)\\
\mbox{}&\qquad \qquad+ \left(\sum_{I\in \Sigma_n\backslash \Omega_{n,\epsilon}} \overline{\mu}(\psi_I(T'))\right)\left (\epsilon+\frac{1}{n}\log t\right)\\
\leq & t \left(\sum_{I\in \Omega_{n,\epsilon}} \xi([I])\right)  \left(|\log c|+h_\xi(\sigma)+\frac{2}{n}\log t\right)+  \left(\epsilon+\frac{1}{n}\log t\right)\\
\leq & t \epsilon  \left(|\log c|+h_\xi(\sigma)+\frac{2}{n}\log t\right)+  \left(\epsilon+\frac{1}{n}\log t\right),
\end{split}
\end{equation*}
which is bounded from above by $\tilde{c}\epsilon$ for certain positive constant $\tilde{c}$ when $n$ is large enough. Now \eqref{e-SMB} follows by letting $\epsilon\to 0$.

Next for $k\in \N$, construct $2$  partitions $\widetilde{\mathcal Q}_k, \widetilde{\mathcal P}_k$ of ${\rm supp}(\mu)$ by
\begin{align*}
\widetilde{\mathcal Q}_k &:=\{Q\in {\mathcal Q}_k:\; \mu(Q)>0\},\\
\widetilde{\mathcal P}_k &:=\left\{\bigcup_{i=1}^N \left(\psi_I(T')+e_i\right):\; I\in \Sigma_k\right\}.
\end{align*}
A simple geometric argument yields that there exists  $u\in \N$ (which is independent of $k$) such that each member in $\widetilde{\mathcal Q}_{kn_0}$ intersects at most $u$ members of  $\widetilde{\mathcal P}_k$, and vice versa. This implies that
$$
\left|H_\mu\left(\widetilde{\mathcal Q}_{kn_0}\right)-H_\mu\left(\widetilde{\mathcal P}_k\right)\right|\leq \log u.
$$
For a proof, see e.g. \cite[Lemma 4.6]{FengHu2009}.  Clearly $H_\mu(\widetilde{\mathcal Q}_k)=H_\mu({\mathcal Q}_k)$. Since  $$\mu\left(\bigcup_{i=1}^N \left(\psi_I(T')+e_i\right)\right)=\overline{\mu}(\psi_I(T'))$$ for each $I\in \Sigma_k$, we get
$$
H_\mu\left(\widetilde{\mathcal P}_k\right)=\sum_{I\in \Sigma_k} \left(-\overline{\mu}(\psi_I(T')) \log \overline{\mu}(\psi_I(T'))\right).
$$
Hence by \eqref{e-SMB}, we get $$\lim_{k\to \infty} \frac{1}{k} H_\mu(  {\mathcal Q}_{kn_0})= \lim_{k\to \infty} \frac{1}{k} H_\mu(  \widetilde{\mathcal Q}_{kn_0})=\lim_{k\to \infty} \frac{1}{k} H_\mu(  \widetilde{\mathcal P}_{k})=h_\xi(\sigma),
$$
from which we obtain $h_\mu^*=h_\xi(\sigma)/n_0$.
\end{proof}

\begin{proof}[Proof of Theorem \ref{thm-cts}]
Suppose that $\dim_H\mu=d$. By Lemma \ref{lem-important}, we have $h_\mu^*=\log |\det A|$. Thus by Lemma \ref{lem-xi}, we get $$h_\xi(\sigma)=n_0h_\mu^*=n_0\log |\det A|=\log \ell.$$
It follows that $\xi$ is the Parry measure on $\Sigma$, and hence  by \eqref{e-xi},
$$
\|M_I\|\approx \xi([I])=\ell^{-n}\mbox{ for }n\in \N \mbox{ and }I\in \{1,\ldots, \ell\}^n.
$$
Therefore, ${\bf M}$ has a uniform Lyapunov exponent. By Theorem \ref{thm-6.4}, $\mu$ is absolutely continuous.
\end{proof}

\section{Projections of Parry measures  under factor maps} \label{S-7}
This section is devoted to the study of Question \ref{ques-3}.

Let  $n, m\in \N$.
Let $\tau$ be a mapping from $\{1,\ldots, n\}$ to $\{1,\ldots, m\}$. Then $\tau$ induces a one-block mapping  $\pi: \{1,\ldots, n\}^\N\to \{1,\ldots, m\}^\N$ by
$$
\pi \left( (x_k)_{k=1}^\infty \right) = \left(\tau (x_k)_{k=1}^\infty \right),\\ \mbox{ for }
(x_k)_{k=1}^\infty\in \{1,\ldots, n\}^\N.
$$

 Let $(\Sigma_A,\sigma)$ be the subshift of finite type over $\{1,\ldots, n\}$,  associated with  a positively  irreducible  $0$-$1$ matrix  $A=(a_{i,j})_{1\leq i,j\leq n}$ (see Section \ref{S-3.1}). Then $Y=\pi(\Sigma_A)$ is an irreducible sofic shift.
Let $\mu$, $\nu$ denote the Parry measures on $\Sigma_A$ and $Y$, respectively (see Section \ref{S-3.2}). Question \ref{ques-3} asks  whether $\nu=\mu\circ \pi^{-1}$.

For each $\ell \in \{1,\ldots, m\}$, define an $n \times n$ matrix $E_\ell=((E_\ell)_{i,j})_{1\leq i,j\leq n}$ by
\[
(E_\ell)_{i,j} = \begin{cases}
a_{i,j} & \text{ if }\pi (j) = \ell, \\ 0 & \text{ otherwise.}
\end{cases}
\]
The main result of this section is the following.

\begin{thm}
\label{thm-5.1}
The tuple  ${\bf E}=(E_1,\ldots, E_m)$ is positively irreducible. Moreover, $\nu=\mu\circ \pi^{-1}$ if and only if ${\bf E}$ has a uniform Lyapunov exponent modulo $0$.
\end{thm}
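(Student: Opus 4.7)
The plan is to reduce everything to a single ``master comparison''
$\|E_w\|\approx e^{|w|\, h_{\rm top}(\Sigma_A)}\,(\mu\circ\pi^{-1})([w])$
for admissible words $w$ in $Y$, and then to read off the equivalence from the uniqueness characterization of Parry measures in Theorem~\ref{thm-Parry}.

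First I would establish positive irreducibility of $\mathbf{E}$. For every column index $j\in\{1,\ldots,n\}$, the condition $\pi(j)=\ell$ holds for exactly one $\ell$, namely $\ell=\tau(j)$; hence the $j$-th column of $E_\ell$ equals the $j$-th column of $A$ for that $\ell$ and vanishes for all other $\ell$. Summing yields $\sum_{\ell=1}^m E_\ell=A$, and positive irreducibility of $A$ transfers immediately to $\mathbf{E}$.

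Next I would unpack the matrix products. By the very definition of the $E_\ell$,
\[
(E_{w_1}\cdots E_{w_k})_{i,i_k}\;=\;\#\left\{(i_1,\ldots,i_{k-1}):\; i\to i_1\to\cdots\to i_k \text{ admissible in }\Sigma_A,\; \tau(i_s)=w_s\; \forall s\right\}.
\]
Since $E_w\geq\mathbf 0$, the matrix norm collapses to a count:
\[
\|E_{w_1}\cdots E_{w_k}\|\;=\;\#\{(i,i_1,\ldots,i_k):\text{admissible path},\; \tau(i_s)=w_s\ (1\leq s\leq k)\}.
\]
Because $A$ is positively irreducible, every column of $A$ contains at least one $1$ and at most $n$ of them, so summing over the predecessor $i$ of $i_1$ loses only a bounded multiplicative factor; hence $\|E_w\|\approx \#(\pi^{-1}(w)\cap\mathcal{L}_k(\Sigma_A))$. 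In particular $E_w\neq\mathbf 0$ iff $w\in\mathcal{L}_k(Y)$, so $Y_{\mathbf{E}}=Y$. Applying Theorem~\ref{thm-Parry} to $(\Sigma_A,\mu)$ gives $\mu([v])\approx e^{-k\, h_{\rm top}(\Sigma_A)}$ for $v\in\mathcal{L}_k(\Sigma_A)$; summing over the fibre of $\pi$ yields
\[
(\mu\circ\pi^{-1})([w])\;\approx\;e^{-k\, h_{\rm top}(\Sigma_A)}\cdot\#(\pi^{-1}(w)\cap\mathcal{L}_k(\Sigma_A)),
\]
and combining the two estimates produces the master comparison
\[
\|E_w\|\;\approx\;e^{k\, h_{\rm top}(\Sigma_A)}\,(\mu\circ\pi^{-1})([w]),\qquad w\in\mathcal{L}_k(Y).
\]

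The equivalence now falls out. If $\mu\circ\pi^{-1}=\nu$, then Theorem~\ref{thm-Parry} applied to $Y$ gives $\nu([w])\approx e^{-k\, h_{\rm top}(Y)}$, so $\|E_w\|\approx e^{k(h_{\rm top}(\Sigma_A)-h_{\rm top}(Y))}$, i.e.\ $\mathbf{E}$ has a uniform Lyapunov exponent modulo~$0$. Conversely, assume $\|E_w\|\approx e^{\lambda k}$ for some $\lambda\in\mathbb{R}$ and all $w\in\mathcal{L}_k(Y)$. By the master comparison, $(\mu\circ\pi^{-1})([w])\approx e^{(\lambda-h_{\rm top}(\Sigma_A))k}$; summing over $\mathcal{L}_k(Y)$, whose cardinality is $\approx e^{k\, h_{\rm top}(Y)}$ (Theorem~\ref{thm-Parry}), and using $\sum_{w}(\mu\circ\pi^{-1})([w])=1$ forces $\lambda=h_{\rm top}(\Sigma_A)-h_{\rm top}(Y)$. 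Hence $(\mu\circ\pi^{-1})([w])\approx e^{-k\, h_{\rm top}(Y)}$ for all $w\in\mathcal{L}_k(Y)$. Since $\mu\circ\pi^{-1}$ is a $\sigma_Y$-invariant Borel probability measure on $Y$ with this approximation property, the uniqueness clause of Theorem~\ref{thm-Parry} identifies it with the Parry measure $\nu$.

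The only mildly delicate step is the master comparison, and it is a routine consequence of the combinatorial interpretation of $(E_{w_1}\cdots E_{w_k})_{i,i_k}$ combined with the Gibbs estimate for $\mu$; no serious obstacle is anticipated.
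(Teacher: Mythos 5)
Your proof is correct and follows essentially the same route as the paper's: both first show $\sum_\ell E_\ell = A$ for positive irreducibility, both establish $\|E_w\| \approx N(w)$ (the paper's Lemma~\ref{lem-5.1}(iii)) and $(\mu\circ\pi^{-1})([w]) \approx N(w)\,e^{-|w|h_{\rm top}(\Sigma_A)}$, and both then close the argument via the uniqueness clause of Theorem~\ref{thm-Parry}, extracting the correct exponent in the converse direction by normalization against $\#\mathcal{L}_k(Y) \approx e^{k\,h_{\rm top}(Y)}$. Your bookkeeping via the ``master comparison'' is a clean way to package the same estimates the paper handles in eq.~(\ref{e-5.3}) and the surrounding lines.
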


To prove the above theorem, we first give a simple lemma.

\begin{lem}
\label{lem-5.1}
\begin{itemize}
\item[(i)] For  $y_1,\ldots, y_k\in \{1,\ldots, m\}$ and $i,j\in \{1,\ldots, n\}$, we have
\begin{align*}
(E_{y_1} \cdots E_{y_k})_{i,j} = \#  \{ x_1 \cdots x_k \in \L_k(\Sigma_{A}): \;\;  & \tau(x_\ell)=y_\ell \mbox{ for }1\leq \ell\leq k, \\ &  x_k =j   \text{ and } a_{i, x_1} = 1 \}.
\end{align*}

\item[(ii)] $E_{y_1} \cdots E_{y_k}\neq 0$ if and only if $y_1\cdots y_k\in {\mathcal L}(Y)$.

\item[(iii)] $\|E_{y_1} \cdots E_{y_k}\|\approx N(y_1\cdots y_k)$ for $y_1\cdots y_k\in {\mathcal L}(Y)$, where
\begin{equation}
\label{e-5.0}
N(y_1\cdots y_k):=\#  \{ x_1 \cdots x_k \in \L_k(\Sigma_{A}):\;  \tau(x_\ell)=y_\ell \mbox{ for }1\leq \ell\leq k\}.
\end{equation}

\item[(iv)] $\sum_{k=1}^mE_k=A$, and hence  ${\bf E}$ is positively irreducible.
\end{itemize}
\end{lem}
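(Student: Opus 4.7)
The proof will be straightforward and largely combinatorial, since each $E_\ell$ is just $A$ with the columns corresponding to preimages of $\ell$ under $\tau$ kept and all other columns zeroed out.

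For (i) my plan is induction on $k$. In the base case $k=1$, the entry $(E_{y_1})_{i,j}$ is $a_{i,j}$ when $\tau(j) = y_1$ and $0$ otherwise, which agrees with counting singletons $x_1 = j$ satisfying $\tau(x_1)=y_1$ and $a_{i,x_1}=1$. For the inductive step, I would write
\[
(E_{y_1}\cdots E_{y_k})_{i,j} \;=\; \sum_{\ell=1}^{n} (E_{y_1})_{i,\ell}\,(E_{y_2}\cdots E_{y_k})_{\ell,j},
\]
substitute the base case for the first factor and the induction hypothesis for the second, and reindex $\ell = x_1$. The constraint $(E_{y_1})_{i,\ell}=1$ forces $\tau(\ell)=y_1$ and $a_{i,\ell}=1$, while the hypothesis forces $a_{\ell,x_2}=1$ together with the internal admissibility of $x_2\cdots x_k$; these combine exactly into admissibility of $x_1\cdots x_k$ with $\tau(x_s)=y_s$, $x_k=j$, and $a_{i,x_1}=1$.

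For (ii) and (iii), I would read the formula in (i) off and simplify. A product $E_{y_1}\cdots E_{y_k}$ is nonzero iff some entry is positive iff there exists an admissible $x_1\cdots x_k \in \L_k(\Sigma_A)$ with $\tau(x_s)=y_s$; since $\Sigma_A$ is irreducible, any such finite admissible word can be extended to an infinite sequence in $\Sigma_A$, whose $\pi$-image lies in $Y$ and starts with $y_1\cdots y_k$. Conversely any $y_1\cdots y_k \in \L(Y)$ arises as $\pi$-image of a prefix of some $\mathbf{x}\in\Sigma_A$, giving admissibility. For (iii) I will use the $\ell^1$ matrix norm $\|B\|=\sum_{i,j}|b_{i,j}|$ fixed in Section~\ref{S-pressure} and sum (i) over $(i,j)$:
\[
\|E_{y_1}\cdots E_{y_k}\| \;=\; \sum_{x_1\cdots x_k \in \L_k(\Sigma_A),\,\tau(x_s)=y_s} \#\{i :\; a_{i,x_1}=1\}.
\]
Since $A$ is positively irreducible it has no zero columns, so the multiplicity $\#\{i:a_{i,x_1}=1\}$ lies in $[1,n]$, giving $\|E_{y_1}\cdots E_{y_k}\|\approx N(y_1\cdots y_k)$.

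For (iv), for any fixed column index $j$, exactly one $\ell=\tau(j)\in\{1,\ldots,m\}$ contributes, and $(E_\ell)_{i,j}=a_{i,j}$ for that $\ell$ while all other $(E_{\ell'})_{i,j}=0$; summing over $\ell$ yields $A$. Positive irreducibility of ${\bf E}$ is then immediate: for the same $L$ that witnesses positive irreducibility of $A$ one has $\sum_{j=1}^{L}(E_1+\cdots+E_m)^j=\sum_{j=1}^{L}A^j$, which is strictly positive. No step looks like a serious obstacle; the only point that needs a brief justification is the extension of finite admissible words to points of $\Sigma_A$ and then to $Y$, which is standard for irreducible subshifts of finite type and their sofic factors.
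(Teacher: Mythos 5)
Your proof is correct and follows essentially the same route as the paper: part (i) is the expansion of the matrix product entry as a sum over paths constrained by $\tau$ and the admissibility of $A$ (you phrase it as an induction, the paper writes out the full telescoping sum directly, but these are the same computation), and parts (ii)--(iv) are then read off exactly as the paper does, with the same use of positive irreducibility of $A$ (no zero columns) to get the lower bound in (iii). Your bound $\|E_{y_1}\cdots E_{y_k}\|\le n\,N(y_1\cdots y_k)$ is in fact slightly sharper than the paper's $n^2 N(y_1\cdots y_k)$, though either suffices for $\approx$.
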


\begin{proof}
By the definition of $E_1, \ldots, E_m$, we have
\begin{eqnarray*}
(E_{y_1} \cdots E_{y_k})_{i,j}&=&\sum_{1\leq x_1,\ldots, x_{k-1}\leq n} (E_{y_1})_{i, x_1} (E_{y_2})_{x_1, x_2}\cdots (E_{y_k})_{x_{k-1}, j}\\
&=&\sum_{1\leq x_1,\ldots, x_{k-1}\leq n \atop{\tau(x_\ell)=y_\ell,\; 1\leq \ell\leq k-1 \atop{\tau(j)=y_k}}} a_{i, x_1} a_{x_1, x_2}\cdots a_{x_{k-1}, j},
\end{eqnarray*}
from which (i) follows.

Clearly (ii) follows from (i), and (iv) follows from the definitions of $E_k$'s. To see (iii),  one can directly  deduce from (i) that
$$N(y_1\cdots y_k)\leq  \|E_{y_1} \cdots E_{y_k}\|\leq n^2 N(y_1\cdots y_k).$$
\end{proof}

\begin{proof}[Proof of Theorem \ref{thm-5.1}] By Lemma \ref{lem-5.1}(iv), ${\bf E}$ is positively irreducible.
Let $$\alpha = \exp(h_{\rm top}(\Sigma_A)),\quad \beta = \exp(h_{\rm top}(Y)).$$ Since $\mu$ and $\nu$ are  the Parry measures on $\Sigma_A$ and $Y$, by Theorem \ref{thm-Parry}, we have
\begin{equation}
 \label{e-5.1}
 \mu([I]) \approx \alpha^{-k} \quad \mbox{ for } k\in \N \mbox{ and } I \in \L_k(\Sigma_A)
\end{equation}
and
\begin{equation}
\label{e-5.2}
 \nu([J]) \approx \beta^{-k}\quad \mbox{ for } k\in \N \mbox{ and } J \in \L_k(Y).
\end{equation}
Notice that for $J \in \L_k(Y)$, $$\mu\circ \pi^{-1}([J])=\sum_{I\in \L_k(\Sigma_A),\; \pi(I)=J}\mu([I]),$$
 where $\pi(I):=\tau(i_1)\cdots \tau(i_k)$ for $I=i_1\cdots i_k$.   By \eqref{e-5.1}, we have
\begin{equation}
\label{e-5.3}
\mu\circ \pi^{-1}([J]) \approx N(J) \alpha^{-|J|} \quad \mbox{ for }J\in \L(Y),
\end{equation}
where $N(J)$ is defined as in \eqref{e-5.0}.

Suppose $\nu=\mu\circ \pi^{-1}$. Then by \eqref{e-5.2} and \eqref{e-5.3}, we have
\begin{equation}
\label{e-5.4}
 N(J) \approx (\alpha / \beta)^{|J|} \quad \mbox{ for }J\in \L(Y).
\end{equation}
By Lemma \ref{lem-5.1}(iii), we obtain  $$\|E_{y_1}\cdots E_{y_k}\|\approx N(y_1\cdots y_k)\approx (\alpha/\beta)^{k}$$ for $y_1\cdots y_k\in \L(Y)$. This together with Lemma \ref{lem-5.1}(ii) shows that
${\bf E}$ has a uniform Lyapunov exponent modulo $0$.

Conversely,
suppose that  ${\bf E}$ has a uniform Lyapunov exponent modulo $0$. Then by Lemma \ref{lem-5.1}(ii), there exists $\lambda\in \R$ such that
$$\|E_{y_1}\cdots E_{y_k}\|\approx e^{k\lambda} \quad \mbox{ for } k\in \N \mbox{ and }y_1\cdots y_k\in \L(Y).$$
Hence by Lemma \ref{lem-5.1}(iii),  $N(J)\approx e^{\lambda |J|}$ for $J\in \L(Y)$. By \eqref{e-5.3}, we have
\begin{equation}
\label{e-5.5}
\mu\circ \pi^{-1}([J]) \approx (e^\lambda\alpha^{-1})^{|J|} \quad \mbox{ for }J\in \L(Y).
\end{equation}
This yields $$1=\sum_{J\in \L_k(Y)}\mu\circ \pi^{-1}([J]) \approx \# (\L_k(Y)) (e^\lambda\alpha^{-1})^{k}\quad  \mbox{ for }k\in \N.$$
It implies $e^\lambda=\alpha/\beta$ since $\lim_{k\to \infty}(1/k) \log \# (\L_k(Y))=h_{\rm top}(Y)=\log \beta$.  Therefore by \eqref{e-5.5} and \eqref{e-5.2},
$$
\mu\circ \pi^{-1}([J]) \approx e^{-|J|h_{\rm top}(Y)}\approx \nu([J])   \quad \mbox{ for }J\in \L(Y).
$$
 By Theorem \ref{thm-Parry}, we have $\mu\circ \pi^{-1}=\nu$. This completes the proof of the theorem.
\end{proof}
\begin{rem}
 Theorem \ref{thm-5.1} was  partially proved in the second author's master thesis \cite{Lo2012}.
\end{rem}

\section{Final remarks and questions}
\label{S-8}
In this section we give a few  more remarks.

  First we remark that without any assumption of irreducibility, there is no  algorithm to check whether a given tuple ${\bf M}$ of square matrices has a uniform Lyapunov exponent modulo $0$. This fact was first  pointed out in \cite[Theorem 8]{ProtasovVoynov2014} in a different context. Indeed, let $A_1,\ldots, A_k$ be a finite family of $n\times n$ non-negative matrices with rational entries and $\rho(A_1+\cdots+ A_k)\leq k$. Set ${\bf M}=(M_1,\ldots, M_k)$ by
$$
M_i=\left(\begin{array}{cc}
1 & 0\\
0 & A_i
\end{array}
\right),\quad i=1,\ldots, k.
$$
It is easy to see that ${\bf M}$ is normalized.   Moreover, ${\bf M}$ has a uniform Lyapunov exponent modulo $0$ if and only if the semigroup generated by $\{A_1,\ldots, A_k\}$ is bounded. However, as proved by Blondel and Tsitsiklis \cite{BlondelTsitsiklis2000},  the problem of determining whether the semigroup generated by a finite set of non-negative matrices with rational entries is bounded, is in general arithmetically undecidable. Hence, there is no  algorithm to check whether the constructed  ${\bf M}$ has a uniform Lyapunov exponent modulo $0$ within finite time.

We also remark that in Theorem \ref{thm-1.6}, the irreducibility (resp. positively irreducibility) assumption on ${\bf M}$ can be replaced by a more general assumption: there exist $C>0$ and $m\in \N$ such that 
\begin{equation}
\label{e-multi}
\sum_{K\in \A^*:\; |K|\leq m}\|M_{IKJ}\|\geq C\|M_I\|\|M_J\| \qquad \mbox{ for all }I, J\in \A^*.
\end{equation}
Indeed under the above condition, the conclusion of Theorem \ref{thm-matrix} still holds (see \cite[Theorem 5.5]{Feng2011}) and the proof of Theorem \ref{thm-1.6} remains valid. It is a natural problem to decide whether a given tuple ${\bf M}$ satisfies the condition \eqref{e-multi} for some $C$ and $m$.

Next we present an extended version of Question \ref{ques-1}. Let $(X,T)$ be a topological dynamical system, that is, $X$ is a compact metric space and $T: X\to X$ a continuous transformation.  Let $M$ be a Borel function on $X$ taking values in the set of real (or complex) $d\times d$ matrices.
\begin{de}We say that $M$ has a uniform Lyapunov exponent on $(X, T)$ if there exists $\lambda\in \R$ such that
$$
\|M(n,x)\|\approx e^{\lambda n},\quad n\in \N, \; x\in X,
$$
where $M(n,x):=M(x)M(Tx)\cdots M(T^{n-1}x)$.
\end{de}

For a given tuple ${\bf M}=(M_1,\ldots, M_k)$ of non-negative matrices, defining
\begin{equation}
\label{e-last}M(x)=M_{x_1} \mbox{ for }x=(x_n)_{n=1}^\infty\in \{1,\ldots, k\}^\N,
\end{equation}
 we see that
${\bf M}$ has a uniform Lyapunov exponent modulo $0$ if and only if that $M$ has a  uniform Lyapunov exponent on $(Y_{\bf M},\sigma)$, where $Y_{\bf M}$ is defined as in \eqref{e-YM}.

As a general extension of Question \ref{ques-1}, one may ask under which condition, a matrix-valued function  $M$ on a given  topological dynamical system
$(X, T)$  has a uniform Lyapunov exponent on $(X, T)$ and how to check it.

 In the end of this paper, we mention a particular example of the above general question. Let ${\bf M}=(M_1,\ldots, M_k)$ be a tuple  of non-negative $d\times d$ matrices and let $\Sigma_A$ be an irreducible subshift of finite type over the alphabet $\{1,\ldots, k\}$. Let $M$ be the matrix-valued
function defined as in \eqref{e-last}. We remark that in this setting, the preceding assumption of positive irreducibility on ${\bf M}$ is no longer sufficient to guarantee that one can check whether $M$ has a uniform Lyapunov exponent on $(\Sigma_A,\sigma)$. Nevertheless, the following stronger assumption on ${\bf M}$ (acting on $\Sigma_A$)   is enough for providing an affirmative answer to the deciding problem: for any $i, i'\in \{1,\ldots, d\}$ and $j, j'\in \{1,\ldots, k\}$, there exists  a finite word $J$ such that $jJj'\in {\mathcal L}(\Sigma_A)$ and $(M_J)_{i,i'}>0$. The justification is quite similar to that of Theorem \ref{thm-1.4}. The details of the proof and the counter example will be included in the Ph.D.~thesis of the second author.

\medskip

\noindent {\bf Acknowledgement}. This research was conducted as part of the second author's Ph.D.~studies. The first author was partially supported by the RGC grant in Hong Kong. The third author was supported by the RGC grant in Hong Kong and the postdoc fellowship in CUHK.  The authors thank Wen Huang for pointing out Lemma \ref{lem-4.6} and  Karoly Simon for pointing out the reference \cite{Ruiz2008}.  They are grateful to Ian Morris for giving many helpful comments and pointing out some related references.


\end{document}